\documentclass[hidelinks]{alt2026} %
\usepackage[margin=3cm]{geometry}

\usepackage[T1]{fontenc}    %
\usepackage{microtype}      %
\usepackage{lmodern}

\usepackage[normalem]{ulem}

\usepackage{xcolor}

\usepackage{mathrsfs}
\usepackage{mathtools,amssymb}   %
\mathtoolsset{showonlyrefs,showmanualtags}

\usepackage{bm}             %

\usepackage{booktabs}

\usepackage{my_notation}

\usepackage{amsthm}
\usepackage{thmtools}                   %

\usepackage{my_citations}   %
\usepackage{csquotes}
\usepackage[british]{babel}     %

\usepackage{hyperref}       %
\usepackage{bookmark}           %
\usepackage{zref-clever}
\let\cref\zcref
\zcsetup{cap,nameinlink=false}                   %

\AddToHook{env/algorithm/begin}{%
   \zcsetup{reftype=algorithm}}

\zcRefTypeSetup{algorithm}{
Name-sg = Algorithm,
name-sg = algorithm,
Name-pl = Algorithms,
name-pl = algorithms,
}

\usepackage{my_theorems}

\usepackage{enumitem}           %
\usepackage{xparse}

\title{Maximum effective dimension and information gain}

\altauthor{%
 \Name{David Janz} \Email{david.janz@stats.ox.ac.uk}\\
 \addr University of Oxford%
 \AND
 \Name{Arya Akhavan} \Email{arya.akhavan@stats.ox.ac.uk }\\
 \addr University of Oxford
 \AND
  \Name{Alexandre B. Tsybakov} \Email{alexandre.tsybakov@ensae.fr }\\
 \addr CREST, ENSAE, Institut Polytechnique de Paris
}

\begin{document}

\maketitle

\begin{abstract}
  We establish general upper bounds on the maximum effective dimension and information gain of kernel Gram matrices over arbitrary designs in terms of approximation properties of the kernel or, equivalently, of the corresponding reproducing kernel Hilbert space. We show that across three regularity regimes, covering as special cases the Mat\'ern and squared exponential kernels, these bounds cannot be improved beyond constant factors.
\end{abstract}

\section{Introduction}

Let \(k\) be a positive semidefinite kernel on a set \(\cX\), let \(X_n=(x_1,\dots,x_n)\in\cX^n\), and write
\[
  G_k(X_n):=\squareb{k(x_i,x_j)}_{i,j=1}^n
\]
for the Gram matrix of \(k\) on \(X_n\).

The effective dimension associated with \(G_k(X_n)\) at scale \(\rho>0\) \citep{zhang2005learning}, also known as the fixed-design degrees of freedom \citep{craven1979smoothing}, is
\[
  \effd^k(X_n;\rho):=\tr\roundb{G_k(X_n)\roundb{G_k(X_n)+\rho I}^{-1}}.
\]
This provides a soft count of the number of eigenvalues of \(G_k(X_n)\) at scale \(\rho\) or larger.

Information gain, which arose in Bayesian experimental design \citep{lindley1956measure} and in the form considered here in the Gaussian process literature \citep{seeger2008information,srinivas2009gaussian}, is
\[
  \gamma^k(X_n;\rho):=\log\det\roundb{I+\rho^{-1}G_k(X_n)}.
\]
Under a Gaussian process model with independent Gaussian observation noise of variance \(\rho\), one half of this quantity is the mutual information between the latent values at \(X_n\) and their noisy observations \citep{rasmussen2006gaussian,seeger2008information}.

We study the maximum growth of both quantities over the design $X_n$. For $n\in\Np$ and $\rho>0$, define
\[
  \Effd^k_n(\cX;\rho):=\sup_{X_n\in\cX^n}\effd^k(X_n;\rho), \qquad \Gamma^k_n(\cX;\rho):=\sup_{X_n\in\cX^n}\gamma^k(X_n;\rho).
\]
These suprema provide distribution-free bounds on the empirical quantities, including when the design is chosen adversarially or sequentially \citep{abbasi2013online,srinivas2009gaussian}. Our contributions are:
\begin{enumerate}
  \item We establish a general approximation-theoretic upper-bound principle for the maximum effective dimension, yielding bounds across three regularity regimes: finite smoothness, ultradifferentiable classes, and finite-order entire extensions.  Corresponding maximum information gain bounds follow from an integral identity.
  \item We show that the upper bounds are sharp in each regularity regime by identifying corresponding stationary kernel classes for which matching lower bounds hold. These classes are characterised by algebraic, stretched-exponential, and super-exponential lower bounds on their spectral densities.
\end{enumerate}
Prior work on the quantities considered here has focused primarily on maximum information gain for the Mat\'ern and squared exponential kernels \citep{srinivas2009gaussian,iwazaki2025improved,iwazaki2026tighter}. For these two special kernels, our general construction implies upper and lower bounds matching up to constant factors, closing the existing gaps in the known results. Previous bounds on the maximum effective dimension were typically recovered from information-gain bounds, whereas our upper-bound analysis proceeds in the opposite direction. Information-gain upper bounds based on Mercer eigenvalue decay were given by \citet{vakili2021information} under an assumption of uniformly bounded Mercer eigenfunctions but, to the best of our knowledge, this assumption has not been verified for either kernel. A detailed comparison with previous results is given in \cref{sec:related-work}.

Effective dimension and information gain are pivotal in several problems of statistics and
learning theory. Effective dimension plays a key role in fixed-design risk bounds for kernel regression \citep{zhang2005learning,bach2013sharp}. At the population level, the corresponding complexity is encoded by capacity assumptions on the decay of $\tr\roundb{T(T+\rho I)^{-1}}$, where $T$ is the kernel integral operator associated with a fixed covariate distribution \citep{caponnetto2007optimal}. Effective dimension also controls the number of columns required for Nystr\"om approximations of the gram matrix \citep{bach2013sharp,elalaoui2015fast,musco2017recursive} and the number of features required for accurate random Fourier feature approximations \citep{avron2017random}.

Information gain is the regularised kernel analogue of the log-determinant criterion in $D$-optimal design \citep{kiefer1959optimum} and appears in worst-case regret bounds for online prediction with Gaussian process models \citep{kakade2005worst}. More broadly, it enters regret and sample-complexity bounds for kernel bandits, Bayesian optimisation, reinforcement learning, and best-arm identification \citep{srinivas2009gaussian,shekhar2018gaussian,janz2020bandit,kassraie2022neural,vakili2023kernelized,salgia2021domain,li2022gaussian,vakili2021optimal}. Two remarkable properties can be mentioned in this respect. First, information gain determines predictive confidence widths for kernel ridge regression with sequentially selected covariates \citep{abbasi2013online} and one can show that this dependence cannot be removed in general \citep{lattimore2023lower}. Second, the sum of the online kernel ridge regression predictive variances scales as twice the information gain, cf. the elliptical-potential lemma  \citep[Theorem~11.7]{cesa2006prediction}.

\paragraph{Notation} All kernels are real-valued and positive semidefinite. For kernels $k,K$, write $k\succeq K$ if $k-K$ is positive semidefinite; the same notation is used for Hermitian matrices. For nonnegative functions or sequences $a,b$ depending on common arguments, write $a\lesssim b$ if $a\leq Cb$ pointwise for some constant $C>0$ independent of those arguments; write $a\gtrsim b$ if $b\lesssim a$, and $a\asymp b$ if both $a\lesssim b$ and $b\lesssim a$. Generic constants $c,C>0$ may change from line to line, with dependence on fixed parameters indicated by subscripts when needed. We write $\N$ for the nonnegative integers, $\Np$ for the positive integers, and $[m]:=\{1,\ldots,m\}$. We write $\norm{\cdot}$ for the Euclidean norm, $\langle x\rangle:=\roundb{1+\norm{x}^2}^{1/2}$, and $u\wedge v:=\min\{u,v\}$. For $\alpha=(\alpha_1,\ldots,\alpha_q)\in\N^q$, write
\[
  |\alpha|:=\sum_{j=1}^q\alpha_j,
  \qquad
  \alpha!:=\prod_{j=1}^q\alpha_j!,
  \qquad
  \partial^\alpha:=\partial_1^{\alpha_1}\cdots\partial_q^{\alpha_q}.
\]

\section{Main results}
Our effective-dimension upper bounds follow by majorising the original kernel with the sum of two kernels, one of which is low rank and the other small on the diagonal.

\begin{theorem}[name={},restate=ThmFiniteRank]\label{thm:abstract-approximation-principle}
  Let \(\cX \subset \Rd\) be a set and let \(k\) be a positive semidefinite kernel on \(\cX\).
  Suppose that for every \(M\in\Np\) there exist positive semidefinite kernels
  \[
    K_M,\ R_M \colon \cX\times \cX \to \R
  \]
  and \(\epsilon_M > 0\) such that the following hold.
  \begin{enumerate}
    \item For every \(n\in\Np\) and \(X_n \in \cX^n\),
    \[
      G_k(X_n)
      \preceq
      G_{K_M}(X_n) + G_{R_M}(X_n).
    \]
    \item For every \(n\in\Np\) and \(X_n \in \cX^n\), $\rank G_{K_M}(X_n) \lesssim M^d$.
    \item For every \(x \in \cX\), $R_M(x,x) \leq \epsilon_M^2$.
  \end{enumerate}
  Then, for every \(n\in\Np\), \(\rho>0\), and \(X_n\in\cX^n\), we have
  \[
    \tr\roundb[\big]{
      G_k(X_n)\roundb{G_k(X_n)+\rho I}^{-1}
    }
    \lesssim
    \inf_{M\in\Np}
    \curlyb[\Big]{M^d + \frac{n}{\rho}\epsilon_M^2}.
  \]
\end{theorem}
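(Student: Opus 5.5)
Fix \(n\), \(\rho\), \(X_n\) and \(M\in\Np\), and write \(G=G_k(X_n)\), \(A=G_{K_M}(X_n)\), \(B=G_{R_M}(X_n)\). The proof rests on two facts about \(f(t)=t/(t+\rho)\): the map \(G\mapsto\tr\roundb{G(G+\rho I)^{-1}}\) is monotone in the Loewner order, and it is subadditive along a low-rank-plus-small splitting. Afterwards we take the infimum over \(M\).

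\textbf{Step 1 (monotonicity).} For \(0\preceq G\preceq A+B\), write \(G(G+\rho I)^{-1}=I-\rho(G+\rho I)^{-1}\). Matrix inversion is operator antitone, so \(G+\rho I\preceq A+B+\rho I\) gives \((A+B+\rho I)^{-1}\preceq(G+\rho I)^{-1}\). Taking traces yields
\[
\tr\roundb{G(G+\rho I)^{-1}}\le \tr\roundb{(A+B)(A+B+\rho I)^{-1}}.
\]
By assumption 1, this reduces the problem to bounding the right-hand side.

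\textbf{Step 2 (splitting).} We bound \(\tr\roundb{(A+B)(A+B+\rho I)^{-1}}\) by splitting the trace as
\[
\tr\roundb{A(A+B+\rho I)^{-1}}+\tr\roundb{B(A+B+\rho I)^{-1}}.
\]

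For the \(A\)-term, let \(P\) be the orthogonal projector onto the range of \(A\), so \(A=PAP\). By cyclicity,
\[
\tr\roundb{A(A+B+\rho I)^{-1}}=\tr\roundb{A^{1/2}(A+B+\rho I)^{-1}A^{1/2}}.
\]
Since \(B\succeq0\), we have \((A+B+\rho I)^{-1}\preceq(A+\rho I)^{-1}\), so this is at most \(\tr\roundb{A(A+\rho I)^{-1}}\). That trace is a sum of \(f\) over the eigenvalues of \(A\). Only \(\rank A\) of those eigenvalues are nonzero, and each term is at most \(1\), so the sum is at most \(\rank A\lesssim M^d\) by assumption 2.

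For the \(B\)-term, \((A+B+\rho I)^{-1}\preceq\rho^{-1}I\). Hence
\[
\tr\roundb{B^{1/2}(A+B+\rho I)^{-1}B^{1/2}}\le\rho^{-1}\tr B=\rho^{-1}\sum_{i=1}^n R_M(x_i,x_i)\le n\epsilon_M^2/\rho
\]
by assumption 3.

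\textbf{Step 3 (conclusion).} Combining the two terms gives
\[
\tr\roundb{G(G+\rho I)^{-1}}\lesssim M^d+n\epsilon_M^2/\rho,
\]
with a constant that does not depend on \(M\), \(n\), \(\rho\) or \(X_n\). Taking the infimum over \(M\) proves the claim.

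The only delicate point is Step 1. The function \(f\) is operator monotone, but trace monotonicity needs only the antitonicity of inversion. I would justify that antitonicity explicitly via the standard argument: \(S\preceq T\) with \(S,T\succ0\) gives \(T^{-1/2}ST^{-1/2}\preceq I\), hence \(T^{1/2}S^{-1}T^{1/2}\succeq I\). The rest of the argument is routine trace manipulation using cyclicity and positive semidefiniteness.
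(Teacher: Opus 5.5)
Your proof is correct. Your Step 1 is the same monotonicity reduction the paper uses; the difference lies in how you bound $\tr\bigl((A+B)(A+B+\rho I)^{-1}\bigr)$.

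The paper splits this trace orthogonally. With $F_\rho(X):=X(X+\rho I)^{-1}$, $\Pi$ the projector onto $\operatorname{ran}(A)$ and $Q=I-\Pi$, it uses $F_\rho(A+B)\preceq I$ on the range of $\Pi$. On the complement it uses $F_\rho(A+B)\preceq\rho^{-1}(A+B)$ together with $QAQ=0$. This gives $\rank A+\rho^{-1}\tr(QBQ)$ directly, without comparing resolvents of different matrices.

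You instead split the numerator additively and compare each piece with a larger resolvent via antitonicity of inversion. This yields $\tr\bigl(A(A+\rho I)^{-1}\bigr)+\rho^{-1}\tr B$. Applying the same resolvent comparison to the $B$-term (using $A\succeq 0$) would give the clean subadditivity $\tr F_\rho(A+B)\le\tr F_\rho(A)+\tr F_\rho(B)$. The theorem then follows from $\tr F_\rho(A)\le\rank A$ and $\tr F_\rho(B)\le\rho^{-1}\tr B$.

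Both routes end at the same bound. The paper's intermediate estimate keeps only the part of $B$ orthogonal to $\operatorname{ran}(A)$, while yours replaces $\rank A$ by the possibly smaller effective dimension of $A$.

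The projector $P$ you introduce in Step 2 is never used and can be dropped.
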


\begin{proof}
  Fix \(M\in\Np\), \(n\in\Np\), \(\rho>0\), and
  \(X_n=(x_1,\dots,x_n)\in\cX^n\). Write
  \[
    G:=G_k(X_n),
    \qquad
    A:=G_{K_M}(X_n),
    \qquad
    B:=G_{R_M}(X_n),
  \]
  and set $F_\rho(X):=X(X+\rho I)^{-1}$. Since $F_\rho(X)=I-\rho(X+\rho I)^{-1}$, the map \(F_\rho\) is monotone on the positive semidefinite cone.
  Hence, by \(G\preceq A+B\), $\tr F_\rho(G) \leq \tr F_\rho(A+B)$.

  Let \(\Pi\) be the orthogonal projection onto
  \(\operatorname{ran}(A)\), and put \(Q:=I-\Pi\). Since
  \[
    0\preceq F_\rho(A+B)\preceq I,\qquad
    F_\rho(A+B)\preceq \rho^{-1}(A+B),\qquad QAQ=0,
  \]
  we have
  \begin{align*}
    \tr F_\rho(A+B)
    =\tr\roundb{\Pi F_\rho(A+B)\Pi} + \tr\roundb{Q F_\rho(A+B)Q}
    &\leq \tr\Pi + \rho^{-1}\tr\roundb{Q(A+B)Q} \\
    &= \rank A + \rho^{-1}\tr(QBQ) \\
    &\leq \rank A + \rho^{-1}\tr B.
  \end{align*}
  By the hypotheses, $\rank A\lesssim M^d$ and $\tr B = \sum_{i=1}^n R_M(x_i,x_i) \leq n\epsilon_M^2$. Consequently, $\tr\roundb{G(G+\rho I)^{-1}}
    \lesssim
    M^d+\frac{n}{\rho}\epsilon_M^2$. Taking the infimum over \(M\in\Np\) proves the theorem.
\end{proof}

An exact integral identity then converts effective-dimension bounds into information-gain bounds.

\begin{lemma}
\label{lem:information-gain-from-effective-dimension}
  Let $k$ be a positive semidefinite kernel on $\cX$ with $\kappa_0:=\sup_{x\in\cX}k(x,x)<\infty$. Let $\Phi:(0,\infty)\to[0,\infty)$ be measurable, and suppose that $\Effd_n^k(\cX;\rho) \leq \Phi(n/\rho)$ for every $n\in\Np$ and $\rho>0$. Then, for every $n\in\Np$ and $\rho>0$,
  \[
    \Gamma_n^k(\cX;\rho)
    \leq
    \int_0^{n/\rho}
    \roundb[\Big]{\kappa_0 b\wedge\Phi(b)}
    \frac{\dif b}{b}.
  \]
\end{lemma}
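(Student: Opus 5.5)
The plan is to prove the bound for a fixed design \(X_n\) and then take the supremum. Fix \(n\), \(\rho\) and \(X_n\), write \(G:=G_k(X_n)\) with eigenvalues \(\lambda_1,\dots,\lambda_n\ge 0\), and use the scalar identity
\[
  \log(1+\lambda/\rho)=\int_\rho^\infty \frac{\lambda}{\lambda+t}\,\frac{\dif t}{t},
\]
which holds because \(\frac{\lambda}{t(\lambda+t)}=\frac1t-\frac1{\lambda+t}\) and the primitive \(\log\frac{t}{\lambda+t}\) vanishes as \(t\to\infty\). Summing over eigenvalues gives the exact identity
\[
  \gamma^k(X_n;\rho)=\int_\rho^\infty \effd^k(X_n;t)\,\frac{\dif t}{t}.
\]
This is the integral identity the paper alludes to.

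Next, substitute \(b=n/t\). Then \(\dif t/t\) becomes \(\dif b/b\), and the range \(t\in(\rho,\infty)\) becomes \(b\in(0,n/\rho)\), so
\[
  \gamma^k(X_n;\rho)=\int_0^{n/\rho}\effd^k(X_n;n/b)\,\frac{\dif b}{b}.
\]
I then bound the integrand pointwise in two ways. First, by hypothesis, \(\effd^k(X_n;n/b)\le \Effd^k_n(\cX;n/b)\le \Phi(b)\), since \(n/(n/b)=b\). Second, I use \(\frac{\lambda}{\lambda+t}\le \lambda/t\), which gives \(\effd^k(X_n;t)\le \tr G/t\le n\kappa_0/t\). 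At \(t=n/b\) this equals \(\kappa_0 b\). The integrand is therefore at most \(\kappa_0 b\wedge\Phi(b)\). This function is measurable because \(\Phi\) is, and it is nonnegative, so the integral inequality follows by monotonicity of the integral. Taking the supremum over \(X_n\) concludes the proof.

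I expect no real obstacle. The only points needing care are justifying the exchange of the finite eigenvalue sum with the integral, which is fine since the integrand is nonnegative (Tonelli), and noting that the \(\kappa_0 b\) branch guarantees integrability near \(b=0\); without it the bound could be vacuous.
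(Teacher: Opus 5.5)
Your proposal is correct and follows the paper's proof essentially step for step. Both use the scalar integral identity summed over eigenvalues, the two pointwise bounds $\kappa_0 n/t$ (from $\lambda/(\lambda+t)\le\lambda/t$ and $\tr G\le\kappa_0 n$) and $\Phi(n/t)$, the substitution $b=n/t$, and a supremum over designs; the only difference is that you substitute before bounding the integrand, while the paper bounds first.
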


\begin{proof}
  Fix $X_n\in\cX^n$, write $G:=G_k(X_n)$, and let
  $\lambda_1,\dots,\lambda_n\geq0$ be its eigenvalues. For every $\lambda\geq0$,
  \[
    \int_\rho^\infty
    \frac{\lambda}{\lambda+t}\frac{\dif t}{t}
    =
    \int_\rho^\infty
    \frac{1}{t}-\frac{1}{\lambda+t}\dif t
    =
    \log\roundb{1+\lambda/\rho}.
  \]
  Summing over the eigenvalues gives the integral identity \[\gamma^k(X_n;\rho)
    =
    \int_\rho^\infty
    \effd^k(X_n;t)\frac{\dif t}{t}.\] Moreover, $G(G+tI)^{-1}\preceq t^{-1}G$ and
  $\tr G\leq\kappa_0n$, so
  \[
    \effd^k(X_n;t)
    \leq
    \frac{\kappa_0n}{t}
    \wedge
    \Effd_n^k(\cX;t)
    \leq
    \frac{\kappa_0n}{t}
    \wedge
    \Phi(n/t).
  \]
  Therefore, changing variables $b=n/t$,
  \[
    \gamma^k(X_n;\rho)
    \leq
    \int_\rho^\infty
    \roundb[\Big]{\frac{\kappa_0n}{t}\wedge\Phi(n/t)}
    \frac{\dif t}{t}
    =
    \int_0^{n/\rho}
    \roundb[\Big]{\kappa_0b\wedge\Phi(b)}
    \frac{\dif b}{b}.
  \]
  Taking the supremum over $X_n\in\cX^n$ proves the claim.
\end{proof}

The preceding results reduce the upper bounds to the decay of the residual sequence $(\epsilon_M)_{M\in\Np}$. We record the rates corresponding to the three residual profiles used below.

\begin{corollary}[name={},restate=CorCanonicalProfiles]
\label{cor:canonical-residual-profiles}
  Suppose that the hypotheses of
  \cref{thm:abstract-approximation-principle}
  hold with residual sequence \((\epsilon_M)_{M\in\Np}\), and suppose that $\kappa_0
    :=
    \sup_{x\in\cX} k(x,x)
    <
    \infty$. For \(n\in\Np\) and \(\rho>0\), let $a:=n/\rho$ be the effective sample size. Then the following hold.

  \begin{enumerate}
    \item \emph{Algebraic residuals.}
    Suppose that, for some \(s>0\), $\epsilon_M
      \lesssim M^{-s}$. Then
    \[
      \Effd_n^k(\cX;\rho)
      \lesssim
      a^{d/(d+2s)}
      \spaced{and}
      \Gamma_n^k(\cX;\rho)
      \lesssim
      a^{d/(d+2s)}.
    \]

    \item \emph{Stretched-exponential residuals.}
    Suppose that, for some \(\sigma\geq1\) and \(c>0\), $\epsilon_M
      \lesssim
      \exp\curlyb{-cM^{1/\sigma}}$. Then
    \[
      \Effd_n^k(\cX;\rho)
      \lesssim
      \roundb{\log(e+a)}^{d\sigma} \spaced{and} \Gamma_n^k(\cX;\rho)
      \lesssim
      \roundb{\log(e+a)}^{d\sigma+1}.
    \]

    \item \emph{Factorial residuals.}
    Suppose that, for some \(c>0\), $\epsilon_M
      \lesssim
      \exp\curlyb{-cM\log(e+M)}$. Set $L(a) = \log(e^e + a)$. Then,
    \[
      \Effd_n^k(\cX;\rho)
      \lesssim
      L(a)^d (\log L(a))^{-d}
    \spaced{and}
      \Gamma_n^k(\cX;\rho)
      \lesssim
      L(a)^{d+1}
      \roundb{\log L(a)}^{-d}.
    \]
  \end{enumerate}
\end{corollary}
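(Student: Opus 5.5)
The plan is to feed each residual profile into \cref{thm:abstract-approximation-principle}, which gives \(\Effd_n^k(\cX;\rho)\lesssim\inf_{M\in\Np}\{M^d+a\epsilon_M^2\}\) with \(a=n/\rho\). In each regime we choose \(M\) to balance the two terms. Then \cref{lem:information-gain-from-effective-dimension} with \(\Phi\) equal to the resulting bound (times a constant) yields the information-gain bounds. Since the effective dimension is always at most \(n\), and small \(a\) needs separate care, we will also use the trivial bound \(\effd\le \kappa_0 a\); this is built into the \(\kappa_0 b\wedge\Phi(b)\) integrand.

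\emph{Effective dimension.}
\begin{itemize}
\item In the algebraic case, take \(M=\lceil a^{1/(d+2s)}\rceil\). Then \(M^d\lesssim 1+a^{d/(d+2s)}\) and \(aM^{-2s}\le a^{d/(d+2s)}\).
\item In the stretched-exponential case, take \(M=\lceil (\log(e+a)/(2c))^{\sigma}\rceil\). Then \(a\epsilon_M^2\lesssim a e^{-\log(e+a)}\le 1\) and \(M^d\lesssim(\log(e+a))^{d\sigma}\).
\item In the factorial case, take \(M\) to be the smallest integer with \(2cM\log(e+M)\ge \log(e+a)\). Then \(a\epsilon_M^2\lesssim 1\), and inverting \(M\log M\approx L\) gives \(M\asymp L(a)/\log L(a)\), hence the bound \(L^d(\log L)^{-d}\).
\end{itemize}
In all three cases the additive constants (coming from \(M\ge1\)) are absorbed, since the right-hand sides are bounded below by a positive constant. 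The exception is the algebraic regime with \(a\) small, where \(a^{d/(d+2s)}\to0\). For \(a\le1\) we use \(\Effd\le\kappa_0 a\le\kappa_0 a^{d/(d+2s)}\) instead, with \(\kappa_0<\infty\).

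\emph{Information gain.} Apply \cref{lem:information-gain-from-effective-dimension} with \(\Phi(b)=C\psi(b)\), where \(\psi\) is the relevant rate. The task is to bound \(\int_0^{a}(\kappa_0 b\wedge C\psi(b))\,\dif b/b\), which we split at \(b=1\). On \((0,1]\) the integrand is at most \(\kappa_0\), contributing \(O(1)\). On \([1,a]\):
\begin{itemize}
\item Algebraic: \(\int_1^a b^{d/(d+2s)-1}\dif b\lesssim a^{d/(d+2s)}\). For \(a<1\), the integral \(\int_0^a \kappa_0 b\wedge Cb^{\theta}\,\dif b/b\) is at most \(\kappa_0 a\le\kappa_0 a^\theta\), so the \(O(1)\) term does not spoil the small-\(a\) case.
\item Stretched-exponential: \(\int_1^a(\log(e+b))^{d\sigma}\dif b/b\le(\log(e+a))^{d\sigma}\log a\), giving the extra logarithm.
\item Factorial: \(\psi\) is eventually increasing, so bounding the integrand by \(\psi(a)\) times \(\log a\lesssim L(a)\) gives \(L^{d+1}(\log L)^{-d}\).
\end{itemize}
Measurability of \(\Phi\) is clear since each \(\psi\) is continuous.

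The main obstacle is the factorial case. First, the choice of \(M\) must be inverted carefully: from \(M\log(e+M)\asymp L\) we need \(M\lesssim L/\log L\) uniformly, including small \(a\) where \(\log L\ge1\) because \(L\ge e\). Second, the monotonicity of \(L^d(\log L)^{-d}\) must be checked; it holds because \(L\ge e^e\) ensures \(\log L\ge e>d\)-independent threshold issues vanish (\(x/\log x\) is increasing for \(x\ge e\)). Everything else is routine bookkeeping of constants.
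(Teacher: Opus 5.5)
Your proposal is correct and follows the paper's proof essentially step for step. You bound $\Effd_n^k$ by $\inf_{M}\{M^d+a\epsilon_M^2\}$ via \cref{thm:abstract-approximation-principle}, using the same three choices of $M$ (your factorial choice as the smallest integer satisfying the threshold is only a cosmetic variant of the paper's $\lceil C L(b)/\log L(b)\rceil$), fall back on $\Effd_n^k\le\kappa_0 a$ for small $a$, and obtain the information-gain bounds by integrating the profiles against $\dif b/b$ in \cref{lem:information-gain-from-effective-dimension}, with the $\kappa_0 b$ bound on $(0,1)$.
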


\begin{proof}
For $b>0$, set $\Phi_\star(b):=\inf_{M\in\Np}\curlyb{M^d+b\epsilon_M^2}$. By \cref{thm:abstract-approximation-principle}, $\Effd_n^k(\cX;\rho)\lesssim\Phi_\star(a)$, while boundedness gives $\Effd_n^k(\cX;\rho)\leq\kappa_0a$. In the three cases, choose respectively
\[
  M=\ceil[\Big]{b^{1/(d+2s)}}, \qquad M=\ceil[\Big]{C\roundb{\log(e+b)}^\sigma}, \qquad M=\ceil[\Big]{C\frac{L(b)}{\log L(b)}},
\]
where $C>0$ is sufficiently large. Writing $p:=d/(d+2s)$ and $q:=d\sigma$, substitution gives
\[
  \Phi_\star(b)\lesssim1+b^p, \qquad \Phi_\star(b)\lesssim\roundb{\log(e+b)}^q, \qquad \Phi_\star(b)\lesssim L(b)^d\roundb{\log L(b)}^{-d},
\]
respectively. Together with $\Effd_n^k(\cX;\rho)\leq\kappa_0a$, these give the stated effective-dimension bounds for all $a>0$. Applying \cref{lem:information-gain-from-effective-dimension}, integrating the three profiles against $\dif b/b$ over $b\geq1$, and using the bound $\kappa_0b$ over $0<b<1$ gives the information-gain bounds.
\end{proof}

\begin{remark}[Choice of domain]
The concrete results below are stated on either $\QQ:=[0,1]^d$ or $\QQpm:=[-1,1]^d$. These choices keep the constructions explicit and avoid carrying domain geometry through the paper. Maximum effective dimension and information gain are monotone under enlargement of the design set. Consequently, whenever the corresponding kernel assumptions are preserved under fixed translations and rescalings, the same rates extend to any bounded set with nonempty interior, since such a set can be sandwiched between translated and rescaled cubes.
\end{remark}

\begin{table}[tb]
  \centering
  \small
  \caption{Sharp effective-dimension and information-gain rates for stationary kernels on~$\QQ=[0,1]^d$. Here $s$ denotes the spectral density, $a:=n/\rho$, $L(a):=\log(e^e+a)$, $\tau>0$, and $1\leq\rho\leq n$; all rates are in the sense of $\asymp$.}
  \label{tab:stationary}
  \begin{tabular}{@{}lcc@{}}
    \toprule
    Spectral regime & $\Effd_n^k(\QQ;\rho)$ & $\Gamma_n^k(\QQ;\rho)$ \\ \midrule
    \shortstack[l]{Algebraic $s(\xi)\asymp\langle\xi\rangle^{-r}$, $r>d$} & $a^{d/r}$ & $a^{d/r}$ \\ \addlinespace
    \shortstack[l]{Stretched exp. $s(\xi)\asymp\exp\curlyb{-\tau\norm{\xi}^{\gamma}}$, $0<\gamma\leq1$} \qquad\qquad& $\roundb{\log(e+a)}^{d/\gamma}$ & $\roundb{\log(e+a)}^{d/\gamma+1}$ \\ \addlinespace
    \shortstack[l]{Super-exp. $s(\xi)\asymp\exp\curlyb{-\tau\norm{\xi}^{\gamma}}$, $\gamma>1$} & $L(a)^d\roundb{\log L(a)}^{-d}$ & $L(a)^{d+1}\roundb{\log L(a)}^{-d}$ \\ \bottomrule
  \end{tabular}
\end{table}

The upper bounds are sharp in each residual regime. A stationary kernel on $\Rd$ depends only on $x-y$; throughout, we write $k(x-y)$ for $k(x,y)$. By Bochner's theorem, $k$ is the Fourier transform of a finite positive Borel measure. For the classes in \cref{tab:stationary}, this measure has a density $s$, called the spectral density, and
\[
  k(h)=\int_{\Rd}e^{2\pi i\xi\cdot h}s(\xi)\dif\xi.
\]
The two-sided bounds on $s$ combine the assumptions used for the upper and lower bounds. The algebraic and stretched-exponential spectral regimes correspond to the residual profiles with the same names, while the super-exponential regime corresponds to the factorial residual profile.

\begin{remark}[Canonical kernels and the intermediate regime]
The Mat\'ern-$\nu$ kernel belongs to the algebraic regime: its spectral density satisfies $s_\nu(\xi)\asymp\langle\xi\rangle^{-(2\nu+d)}$, so the algebraic row of \cref{tab:stationary} applies with $r=2\nu+d$. The squared exponential kernel belongs to the super-exponential regime: its spectral density satisfies $s(\xi)\asymp\exp\curlyb{-\tau\norm{\xi}^2}$ for some $\tau>0$, so the super-exponential row applies with $\gamma=2$.

The stretched-exponential class gives an intermediate scale between these two canonical examples. For each fixed $0<\gamma\leq1$, the rates in \cref{tab:stationary} are polylogarithmic, growing more slowly than every algebraic rate but more rapidly than the squared exponential rate. The boundary $\gamma=1$ marks a sharp threshold: at $\gamma=1$ there is no $\roundb{\log L(a)}^{-d}$ factor, whereas every fixed $\gamma>1$ falls in the super-exponential regime and gains this factor in both quantities. The matching lower bounds show that this transition is not an artefact of the upper-bound constructions.
\end{remark}

The next section derives the algebraic and stretched-exponential residual profiles from regularity of the native space, while \cref{sec:factorial-residual} derives the factorial profile from entire extensions. \cref{sec:main-lower-bounds} is devoted to the matching lower bounds.

\section{Residual control from native-space regularity}

Let $\cH_k$ be the native space of $k$, also called its reproducing kernel Hilbert space (RKHS), with norm $\norm{\cdot}_{\cH_k}$. For $x\in\cX$, write $k_x:=k(x,\cdot)$. Then $k_x\in\cH_k$ and
\[
  f(x)=\langle f,k_x\rangle_{\cH_k}, \qquad f\in\cH_k,\ x\in\cX.
\]
The construction uses sampling inequalities for the native space, which control functions that vanish on a finite sampling set. The following proposition converts such an inequality into the low-rank-plus-residual decomposition required by \cref{thm:abstract-approximation-principle}.

\begin{proposition}[name={},restate=PropSamplingMajorants]
\label{prop:sampling-majorants}
  Let \(\cX\subset\Rd\), and let \(k\) be a kernel on \(\cX\) with
  RKHS \(\cH_k\). Suppose that, for every \(M\in\Np\), there exist
  a finite set \(Z_M\subset\cX\) and a number \(\epsilon_M>0\) such that $\card(Z_M)\lesssim M^d$ and
  \[
    \norm{f}_{L^\infty(\cX)}
    \leq
    \epsilon_M\norm{f}_{\cH_k}
    \qquad
    \text{for all \(f\in\cH_k\) with \(f|_{Z_M}=0\)}.
  \]
  For each \(M\in\Np\), set $\cV_M := \spn\curlyb{k(z,\cdot):z\in Z_M} \subset\cH_k$, and let \(\Pi_M\) denote the orthogonal projection of \(\cH_k\)
  onto \(\cV_M\). Define
  \[
    K_M(x,y) := \left\langle \Pi_M k_y, \Pi_M k_x\right\rangle_{\cH_k}
    \spaced{and}
    R_M(x,y) := \left\langle (I-\Pi_M)k_y,(I-\Pi_M)k_x \right\rangle_{\cH_k}.
  \]
  Then, for each $M \in \Np$, \(K_M\) and \(R_M\) are positive semidefinite kernels satisfying
  \[
    k = K_M + R_M, \qquad \sup_{x \in \cX} R_M(x,x) \leq \epsilon_M^2.
  \]
  Moreover, for every $n \in \Np$ and $X_n \in \cX^n$,
  \[
    \rank G_{K_M}(X_n) \leq \dim\cV_M \leq \card(Z_M) \lesssim M^d.
  \]
\end{proposition}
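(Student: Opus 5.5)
The plan is to work entirely inside the RKHS $\cH_k$ and use the reproducing property together with the orthogonal decomposition $I=\Pi_M+(I-\Pi_M)$.

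\textbf{Step 1: positive semidefiniteness and the decomposition.} Both $K_M$ and $R_M$ are Gram-type kernels of feature maps: $K_M$ uses $x\mapsto\Pi_M k_x$ and $R_M$ uses $x\mapsto(I-\Pi_M)k_x$. Hence
\[
  \sum_{i,j}c_ic_jK_M(x_i,x_j)=\norm[\Big]{\sum_i c_i\Pi_Mk_{x_i}}_{\cH_k}^2\geq0,
\]
and the same computation works for $R_M$. For the identity $k=K_M+R_M$, write $k(x,y)=\langle k_y,k_x\rangle_{\cH_k}$ and split $k_x=\Pi_Mk_x+(I-\Pi_M)k_x$, and likewise for $k_y$. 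The cross terms vanish because $\Pi_M$ is an orthogonal projection, so $\langle\Pi_Mk_y,(I-\Pi_M)k_x\rangle_{\cH_k}=0$.

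\textbf{Step 2: the diagonal bound, which is the only substantive step.} Fix $x\in\cX$ and set $g:=(I-\Pi_M)k_x\in\cH_k$. For $z\in Z_M$ we have $k_z\in\cV_M$, so $g(z)=\langle g,k_z\rangle_{\cH_k}=0$. Thus $g|_{Z_M}=0$, and the sampling inequality gives $\norm{g}_{L^\infty(\cX)}\leq\epsilon_M\norm{g}_{\cH_k}$. On the other hand, using self-adjointness and idempotence of $I-\Pi_M$,
\[
  R_M(x,x)=\norm{g}_{\cH_k}^2=\langle (I-\Pi_M)k_x,k_x\rangle_{\cH_k}=g(x)\leq\norm{g}_{L^\infty(\cX)}\leq\epsilon_M\norm{g}_{\cH_k}.
\]
So $\norm{g}_{\cH_k}^2\leq\epsilon_M\norm{g}_{\cH_k}$, which gives $\norm{g}_{\cH_k}\leq\epsilon_M$ (trivially so if $g=0$). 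Therefore $R_M(x,x)\leq\epsilon_M^2$ for every $x\in\cX$, and taking the supremum over $x$ gives the claimed bound. I expect this self-improving trick, $\norm{g}^2=g(x)$, to be the main point of the argument.

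\textbf{Step 3: the rank bound.} Choose an orthonormal basis $e_1,\dots,e_m$ of $\cV_M$, where $m=\dim\cV_M\leq\card(Z_M)$. Then
\[
  K_M(x,y)=\sum_{l=1}^m\langle k_y,e_l\rangle_{\cH_k}\langle k_x,e_l\rangle_{\cH_k}=\sum_{l=1}^m e_l(x)e_l(y).
\]
Consequently $G_{K_M}(X_n)=\sum_{l=1}^m v_lv_l^\top$ with $v_l=(e_l(x_i))_{i=1}^n$, a sum of $m$ rank-one matrices. Hence $\rank G_{K_M}(X_n)\leq m\leq\card(Z_M)\lesssim M^d$.
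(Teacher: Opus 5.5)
Your proposal is correct and follows the paper's proof. Positive semidefiniteness and $k=K_M+R_M$ come from the orthogonal splitting, the rank bound comes from $G_{K_M}(X_n)$ being a Gram matrix of vectors in $\cV_M$, and the diagonal bound comes from the sampling inequality applied to functions orthogonal to $\cV_M$. The only cosmetic difference is that the paper writes $R_M(x,x)^{1/2}=\sup\curlyb{\abs{f(x)}:\norm{f}_{\cH_k}\leq1,\ f|_{Z_M}=0}$, whereas you plug in the extremiser $g=(I-\Pi_M)k_x$ directly via $\norm{g}_{\cH_k}^2=g(x)$. Your route needs only the easy direction $g\perp\cV_M\Rightarrow g|_{Z_M}=0$.
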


\begin{proof}
Positive semidefiniteness follows directly from the Gram representations, while orthogonality of the ranges of $\Pi_M$ and $I-\Pi_M$ gives $k=K_M+R_M$. Since $f|_{Z_M}=0$ is equivalent to $f\perp\cV_M$, for every $x\in\cX$,
\[
  R_M(x,x)^{1/2}=\norm{(I-\Pi_M)k_x}_{\cH_k}=\sup\curlyb[\Big]{\abs{f(x)}:\norm{f}_{\cH_k}\leq1,\ f|_{Z_M}=0}.
\]
The assumed sampling inequality gives $R_M(x,x)\leq\epsilon_M^2$. Finally, $G_{K_M}(X_n)$ is the Gram matrix of the vectors $\Pi_Mk_{x_1},\dots,\Pi_Mk_{x_n}\in\cV_M$, and hence $\rank G_{K_M}(X_n) \lesssim M^d$.
\end{proof}

We apply \cref{prop:sampling-majorants} on the hypercube $\QQ:=[0,1]^d$ using the grids
\[
  Z_M:=M^{-1}\curlyb{0,\ldots,M}^d\subset\QQ,\qquad M\in\Np.
\]
Throughout this section, $\epsilon_M$ denotes the bound in the sampling inequality of \cref{prop:sampling-majorants} for this choice of $Z_M$.

\subsection{Algebraic residuals from finite smoothness}

Algebraic residual bounds follow from continuous embeddings of the native space into Bessel-potential or H\"older spaces. For normed function spaces $\cF$ and $\cF'$, write $\cF\hookrightarrow\cF'$ if $\cF$ is continuously embedded in $\cF'$; equivalently, there exists a constant $C>0$ such that
\[
  \norm{f}_{\cF'}\leq C\norm{f}_{\cF}
  \qquad
  \text{for all }f\in\cF.
\]
For $t>0$, let $H^t(\QQ)$ denote the restriction to $\QQ$ of the Bessel potential space $H^t(\Rd)$, equipped with its usual quotient norm. For $r\in\N$ and $\theta\in(0,1]$, let $C^{r,\theta}(\QQ)$ denote the usual H\"older space, and write $C^{r+\theta}(\QQ):=C^{r,\theta}(\QQ)$ when $0<\theta<1$. Full definitions and our conventions on open sets are given in \cref{app:functions}. With these conventions, the residual bounds are as follows.

\begin{proposition}[name={}, restate=PropFiniteSmoothnessResiduals]
\label{prop:finite-smoothness-residuals}
  Let \(k\) be a kernel on \(\QQ\) with RKHS \(\cH_k\). For
  \(M\in\Np\), let $Z_M := M^{-1}\curlyb{0,\ldots,M}^d \subset\QQ$.
  Then $\card(Z_M)\lesssim M^d$, and the following hold.

  \begin{enumerate}
    \item If \(t>d/2\) and $\cH_k\hookrightarrow H^t(\QQ)$, then $\epsilon_M \lesssim M^{-(t-d/2)}$.
    \item If \(s>0\) and $\cH_k\hookrightarrow C^s(\QQ)$, then $\epsilon_M \lesssim M^{-s}$.
    \item If $r \in \N$ and $\cH_k \hookrightarrow C^{r,1}(\QQ)$, then $\epsilon_M \lesssim M^{-(r+1)}$.
  \end{enumerate}
  Each of these embeddings also implies that $\kappa_0:=\sup_{x\in\QQ}k(x,x)<\infty$.
\end{proposition}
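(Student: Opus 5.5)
The three items reduce to sampling inequalities on the grid: for any function $f$ in the relevant smoothness space with $f|_{Z_M}=0$, we need
\[
  \norm{f}_{L^\infty(\QQ)}\lesssim h^{\beta}\norm{f}_{\cF},
\]
where $h=1/M$ is the fill distance of $Z_M$ in $\QQ$ (indeed $h\le\sqrt d/(2M)$), and $\beta=t-d/2$, $s$, or $r+1$ respectively. Composing with the embedding $\norm{f}_{\cF}\le C\norm{f}_{\cH_k}$ then gives the stated $\epsilon_M$. The bound $\card(Z_M)=(M+1)^d\le 2^dM^d$ is immediate. The finiteness of $\kappa_0$ follows since each target space embeds in $L^\infty(\QQ)$ (for $H^t$, by Sobolev embedding with $t>d/2$). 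Hence point evaluation is bounded uniformly on $\cH_k$, and
\[
  k(x,x)=\norm{k_x}_{\cH_k}^2=\sup_{\norm{f}_{\cH_k}\le1}\abs{f(x)}^2\le C^2.
\]

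For items 2 and 3 I would argue locally, cell by cell. Fix $x\in\QQ$ and the grid cell $x+[\,\cdot\,]$ of side $1/M$ containing it, or better, a cube $Q_x\subset\QQ$ of side $\asymp m/M$ containing $x$ and a full tensor subgrid of $(m+1)^d$ points of $Z_M$, with $m$ fixed depending on $r$. Let $P$ be the tensor-product Lagrange interpolant of degree $m\ge r$ in each variable on that subgrid. Since $f$ vanishes on the nodes, $P f=0$, so
\[
  f(x)=f(x)-Pf(x).
\]
A Bramble--Hilbert/Taylor argument then bounds this error. Write $f=T+(f-T)$ with $T$ the degree-$r$ Taylor polynomial at $x$, which is reproduced by $P$. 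This gives
\[
  \abs{f(x)-Pf(x)}\le(1+\Lambda)\sup_{Q_x}\abs{f-T}\lesssim (m/M)^{r+\theta}\abs{f}_{C^{r,\theta}},
\]
where the Lebesgue constant $\Lambda$ of equispaced tensor interpolation depends only on $m,d$. This handles $C^{r,\theta}$ for $\theta\in(0,1]$, hence both $C^s$ (with $s=r+\theta$, $\theta<1$) and $C^{r,1}$. Near the boundary one simply chooses $Q_x$ inside $\QQ$, which is possible once $M\ge m$. Small $M$ is absorbed into constants, since there $\epsilon_M\le C$ trivially by the $L^\infty$ embedding.

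Item 1 is the main obstacle, since $H^t$ with $t>d/2$ gives only Hölder regularity $t-d/2$ (minus endpoint issues) pointwise. Using the embedding $H^t\hookrightarrow C^{t-d/2}$ would lose at integer values of $t-d/2$, and loses nothing otherwise. I would instead use the localized Sobolev-type estimate on each cube $Q_x$ of side $\delta\asymp m/M$, applied via scaling. On the unit reference cube, $\norm{g-Pg}_{L^\infty}\lesssim\abs{g}_{H^t}$ holds, with the seminorm of order $t$ (including a fractional Sobolev--Slobodeckij part), by Bramble--Hilbert since $H^t(\text{cube})\hookrightarrow C^0$ and $P$ reproduces polynomials of degree $<t$. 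Rescaling $g(y)=f(x_0+\delta y)$ gives
\[
  \abs{g}_{H^t}=\delta^{t-d/2}\abs{f}_{H^t(Q_x)},
\]
so $\abs{f(x)}\lesssim M^{-(t-d/2)}\norm{f}_{H^t(\QQ)}$. Here the quotient norm dominates the intrinsic $H^t(Q_x)$ norm by restriction. This is the standard sampling inequality of Narcowich--Ward--Wendland type, and I would cite it if an elementary proof becomes too technical.
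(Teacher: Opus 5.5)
Your proposal is correct, and its skeleton matches the paper's. You bound the fill distance of $Z_M$ by $\sqrt d/(2M)$, apply a sampling inequality to functions in the target space that vanish on the grid, and compose with the embedding. You then get $\kappa_0<\infty$ from $\mathcal{H}_k\hookrightarrow L^\infty$ via $k(x,x)^{1/2}=\sup_{\|f\|_{\mathcal{H}_k}\le 1}|f(x)|$, as the paper does.

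The difference is in the sampling inequalities themselves. The paper uses them as a black box (Krieg et al., Theorem~4.1 and Remark~0.3). You prove them by local polynomial reproduction. Tensor-product Lagrange interpolation on a grid-aligned subcube of side $m/M$ returns $0$ because $f|_{Z_M}=0$, yet it reproduces Taylor polynomials. In the H\"older cases you combine this with the Taylor remainder. In the Sobolev case you combine it with Bramble--Hilbert and the scaling $|g|_{H^t}=\delta^{t-d/2}|f|_{H^t(Q_x)}$. The scaling computation, the domination of the intrinsic seminorm on $Q_x$ by the quotient norm, and the Lebesgue-constant bound all check out.

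Your route is self-contained and makes the dependence of the constants on $m,d,t$ transparent. It also correctly explains why item~1 cannot simply be reduced to item~2 when $t-d/2\in\mathbb{N}$. The citation is shorter and applies to arbitrary point sets controlled only through their fill distance. Your argument instead uses the exact tensor structure of $Z_M$ (a full $(m+1)^d$ subgrid in every local cube), which is harmless here.

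One small omission: for integer $s$, the paper's convention sets $C^s:=C^r$ with $r=s$ and no H\"older part. Your case split over $\theta\in(0,1]$ does not cover this. It is fixed in one line by the embedding $C^{s}([0,1]^d)\hookrightarrow C^{s-1,1}([0,1]^d)$, which follows from the mean value theorem on the convex cube (or from a degree-$(s-1)$ Taylor polynomial with Lagrange remainder). Item~3 then gives $M^{-s}$.
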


\begin{proof}
The grid $Z_M$ has cardinality $\card(Z_M)=(M+1)^d\lesssim M^d$ and fill distance at most $\sqrt d/(2M)$. For $f\in\cH_k$ with $f|_{Z_M}=0$, the sampling inequalities of \citet[Theorem~4.1 and Remark~0.3]{krieg2024random}, combined with the respective embeddings, give
\[
  \norm{f}_{L^\infty(\QQ)}\lesssim M^{-(t-d/2)}\norm{f}_{\cH_k},
  \ \ \norm{f}_{L^\infty(\QQ)}\lesssim M^{-s}\norm{f}_{\cH_k},
  \ \ \norm{f}_{L^\infty(\QQ)}\lesssim M^{-(r+1)}\norm{f}_{\cH_k},
\]
respectively in the three cases. This verifies the hypotheses of \cref{prop:sampling-majorants} with the stated choices of $\epsilon_M$. Finally, each assumption gives $\cH_k\hookrightarrow L^\infty(\QQ)$, whence the identity $k(x,x)^{1/2}=\sup_{\norm{f}_{\cH_k}\leq1}\abs{f(x)}$ gives $\kappa_0<\infty$.
\end{proof}

For stationary kernels, polynomial decay of the spectral density gives the Bessel-potential embedding in \cref{prop:finite-smoothness-residuals}.

\begin{proposition}[name={},restate=PropStatSobolevCond]\label{prop:stationary-sobolev-condition}
  Let \(\tilde k\) be a stationary kernel on \(\Rd\) with spectral density \(s\), and let $k := \tilde k|_{\QQ\times\QQ}$. Suppose that for some \(t > d/2\),
  \[
    s(\xi) \leq C \langle \xi \rangle^{-2t}
    \qquad
    \text{for all } \xi \in \Rd.
  \]
  Then $\cH_k\hookrightarrow H^t(\QQ)$, and therefore $\epsilon_M \lesssim M^{-(t-d/2)}$ on the uniform grid on $\QQ$.
\end{proposition}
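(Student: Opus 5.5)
The plan is to describe the native space of $\tilde k$ on $\Rd$ explicitly, compare it with $H^t(\Rd)$, and then pass to the restriction $k=\tilde k|_{\QQ\times\QQ}$.

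\textbf{Step 1: the native space on $\Rd$.} The RKHS $\cH_{\tilde k}$ consists of the functions $f=\widehat{g s}$, that is, $f(x)=\int_{\Rd} e^{2\pi i\xi\cdot x}g(\xi)s(\xi)\dif\xi$ with $g\in L^2(s\dif\xi)$. Its norm is
\[
  \norm{f}_{\cH_{\tilde k}}^2=\inf\curlyb[\Big]{\int_{\Rd}\abs{g}^2 s\dif\xi},
\]
where the infimum runs over all representing $g$. Equivalently, $\hat f=gs$ and $\norm{f}_{\cH_{\tilde k}}^2=\int_{\{s>0\}}\abs{\hat f}^2/s\dif\xi$. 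I will verify this characterisation through the reproducing property. The kernel section is $\tilde k_x=\widehat{e^{-2\pi i\xi\cdot x}s}$, so $\langle f,\tilde k_x\rangle=\int g(\xi)e^{2\pi i \xi\cdot x}s(\xi)\dif\xi=f(x)$. The space so defined is complete, being a quotient of the Hilbert space $L^2(s\dif\xi)$ by a closed subspace.

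\textbf{Step 2: embedding on $\Rd$.} Take $f\in\cH_{\tilde k}$ with $\hat f=gs$. Then, using $s\le C\langle\xi\rangle^{-2t}$,
\[
  \norm{f}_{H^t(\Rd)}^2=\int\langle\xi\rangle^{2t}\abs{g}^2s^2\dif\xi\le C\int\abs{g}^2 s\dif\xi .
\]
Taking the infimum over representations $g$ gives $\norm{f}_{H^t(\Rd)}\le C^{1/2}\norm{f}_{\cH_{\tilde k}}$. The integrability needed here is harmless: $gs\in L^1$ by Cauchy--Schwarz since $s$ is integrable, and $gs\in L^2$ because $s$ is bounded, so $f$ is a genuine $L^2$ function with Fourier transform $gs$.

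\textbf{Step 3: restriction.} The standard restriction theorem for RKHS (Aronszajn) says that $\cH_k$ on $\QQ$ consists exactly of the restrictions $f|_{\QQ}$ with $f\in\cH_{\tilde k}$, normed by
\[
  \norm{h}_{\cH_k}=\min\curlyb{\norm{f}_{\cH_{\tilde k}}: f|_{\QQ}=h}.
\]
Given $h\in\cH_k$, choose the minimal-norm extension $f$. Then, since $H^t(\QQ)$ carries the quotient norm,
\[
  \norm{h}_{H^t(\QQ)}\le\norm{f}_{H^t(\Rd)}\le C^{1/2}\norm{f}_{\cH_{\tilde k}}=C^{1/2}\norm{h}_{\cH_k}.
\]
This is exactly $\cH_k\hookrightarrow H^t(\QQ)$. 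Part~1 of \cref{prop:finite-smoothness-residuals} then gives $\epsilon_M\lesssim M^{-(t-d/2)}$.

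\textbf{Main obstacle.} The only delicate step is Step~1 when $s$ may vanish on a set of positive measure. There the representation $g$ is not unique, which is why I work with the infimum, or equivalently the quotient, formulation rather than dividing by $s$. I will also need to confirm that the map $g\mapsto\widehat{gs}$ is continuous into point evaluations, via $\abs{f(x)}\le\norm{g}_{L^2(s)}\,\tilde k(0)^{1/2}$, so that the space so defined is indeed the RKHS of $\tilde k$.
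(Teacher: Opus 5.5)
Your proof is correct and follows the paper's route. A Fourier description of $\cH_{\tilde k}$ gives $\cH_{\tilde k}\hookrightarrow H^t(\Rd)$, and the restriction theorem (\cref{lem:rkhs-restriction}) together with the quotient norm on $H^t(\QQ)$ transfers this to $\cH_k$. The only difference is that you derive the Fourier characterisation yourself from the spectral feature map into $L^2(s\,\dif\xi)$, whereas the paper cites \citet[Theorem~10.12]{wendland2004scattered}; as stated, that theorem also assumes $\tilde k\in L^1(\Rd)$, which the proposition does not, so your version is slightly more self-contained. Your worry about non-unique representations is moot, since $g$ is determined $s\,\dif\xi$-almost everywhere.
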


\begin{proof}
  By the standard Fourier characterisation of stationary RKHSs, the decay assumption on \(s\) implies \(\cH_{\tilde k}\hookrightarrow H^t(\Rd)\); see \citet[Theorem~10.12]{wendland2004scattered}. The native space of the restriction of a kernel is the restriction of the native space, with the corresponding quotient norm (\cref{lem:rkhs-restriction}). Combining these facts gives \(\cH_k\hookrightarrow H^t(\QQ)\).
\end{proof}

The proposition yields the rates for the Mat\'ern kernel.

\begin{corollary}[name={},restate=CorMaternUB]\label{cor:matern-sobolev-upper}
  Let \(k_\nu\) be the Mat\'ern-\(\nu\) kernel on \(\Rd\), \(\nu>0\), so that its spectral density satisfies
  \[
    s_\nu(\xi) \asymp \langle \xi \rangle^{-(2\nu+d)}.
  \]
  Then, for every \(n\geq1\) and \(\rho>0\),
  \[
    \Effd_n^{k_\nu}(\QQ;\rho)
    \leq
    C (n/\rho)^{d/(2\nu+d)} \spaced{and}
    \Gamma_n^{k_\nu}(\QQ;\rho)
    \leq
    C (n/\rho)^{d/(2\nu+d)}.
  \]
\end{corollary}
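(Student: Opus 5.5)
We chain the earlier results, using the restriction $k:=k_\nu|_{\QQ\times\QQ}$. Since $s_\nu(\xi)\lesssim\langle\xi\rangle^{-(2\nu+d)}=\langle\xi\rangle^{-2t}$ with $t:=\nu+d/2>d/2$, \cref{prop:stationary-sobolev-condition} applies. It gives $\cH_k\hookrightarrow H^t(\QQ)$ and the sampling bound $\epsilon_M\lesssim M^{-(t-d/2)}=M^{-\nu}$ on the uniform grid $Z_M$, whose cardinality satisfies $\card(Z_M)\lesssim M^d$.

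Feeding this into \cref{prop:sampling-majorants}, we obtain the decomposition $k=K_M+R_M$ with $\rank G_{K_M}(X_n)\lesssim M^d$ and $R_M(x,x)\le\epsilon_M^2$. Equality in the Gram matrices gives hypothesis~1 of \cref{thm:abstract-approximation-principle}, so all three hypotheses hold with residual sequence $\epsilon_M\lesssim M^{-\nu}$. The boundedness condition $\kappa_0<\infty$ holds in either of two ways. The final clause of \cref{prop:finite-smoothness-residuals} gives it via the same Sobolev embedding. Alternatively, one can check directly that $k_\nu(0)=\int s_\nu<\infty$, because $2\nu+d>d$.

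We then apply part~1 of \cref{cor:canonical-residual-profiles} with the algebraic exponent $s=\nu$. This yields
\[
\Effd_n^{k_\nu}(\QQ;\rho)\lesssim a^{d/(d+2\nu)},\qquad \Gamma_n^{k_\nu}(\QQ;\rho)\lesssim a^{d/(d+2\nu)},
\]
where $a=n/\rho$. These are exactly the claimed bounds. The constants depend only on $\nu$, $d$ and the spectral constants, so they are uniform in $n\ge1$ and $\rho>0$.

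There is essentially no obstacle beyond the bookkeeping of the exponent: $t-d/2=\nu$ and $d/(d+2s)=d/(2\nu+d)$. The one point to verify is that part~1 of \cref{cor:canonical-residual-profiles} holds for all $a>0$ and not only for large $a$. It does. For $a\ge1$ we have $1+a^p\lesssim a^p$. For $a<1$ we use the bound $\Effd\le\kappa_0 a\le\kappa_0 a^p$, and for the information gain the integral $\int_0^a\kappa_0 b\,\dif b/b=\kappa_0 a\le\kappa_0 a^p$. This relies on $p=d/(d+2\nu)\le1$, so that $a\le a^p$ for $a<1$.
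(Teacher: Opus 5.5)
Your proposal is correct and follows the same route as the paper. You apply \cref{prop:stationary-sobolev-condition} with $t=\nu+d/2$ to get $\epsilon_M\lesssim M^{-\nu}$ on the uniform grid, pass this through \cref{prop:sampling-majorants} and \cref{thm:abstract-approximation-principle}, and then invoke the algebraic case of \cref{cor:canonical-residual-profiles} with $s=\nu$. Your extra check of the regime $a<1$, using $p\le 1$ and the trivial bound $\kappa_0 a$, is the same step the paper's proof of that corollary uses to cover all $a>0$.
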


\begin{proof}
    The corollary follows by applying the proposition with $t = \nu + d/2$, then combining with \cref{prop:finite-smoothness-residuals}  and the algebraic residual part of \cref{cor:canonical-residual-profiles}.
\end{proof}

The H\"older branch of \cref{prop:finite-smoothness-residuals} can be verified directly from a feature map.

\begin{proposition}[
  name={},
  restate=PropFeatureHolder
]
\label{prop:feature-map-holder-native}
  Fix $r \in \N$ and $\theta \in [0,1)$ with $r + \theta > 0$. Let \(U\subset\Rd\) be open with \(\QQ\subset U\), let
  \(\tilde k\) be a kernel on \(U\), and suppose that
  \[
    \tilde k(x,y)
    =
    \langle \Phi(y),\Phi(x)\rangle_{\cF},
    \qquad
    x,y\in U,
  \]
  for a Hilbert space \(\cF\) and a feature map \(\Phi\colon U\to\cF\). Set $k:=\tilde k|_{\QQ\times\QQ}$.

  Suppose that \(\Phi\) is \(r\)-times continuously differentiable
  on a neighbourhood of \(\QQ\), where derivatives are taken in the
  norm of \(\cF\). If \(\theta>0\), suppose in addition that
  \[
    \norm{
      \partial^\alpha\Phi(x)
      -
      \partial^\alpha\Phi(y)
    }_{\cF}
    \lesssim
    \norm{x-y}^{\theta}
    \qquad
    \text{for all \(x,y\in\QQ\) and \(|\alpha|=r\)}.
  \]
  Then $\cH_k\hookrightarrow C^{r+\theta}(\QQ)$, and therefore $\epsilon_M \lesssim M^{-(r+\theta)}$ on the uniform grid on $\QQ$.
\end{proposition}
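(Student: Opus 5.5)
The plan is to use the standard description of the native space of a kernel given by a feature map. Every \(f\in\cH_{\tilde k}\) has the form \(f(x)=\langle w,\Phi(x)\rangle_{\cF}\) for some \(w\in\cF\), with \(\norm{f}_{\cH_{\tilde k}}=\min\{\norm{w}_{\cF}: f=\langle w,\Phi(\cdot)\rangle_{\cF}\}\). Restricting to \(\QQ\) (\cref{lem:rkhs-restriction}) shows that every \(f\in\cH_k\) is likewise of the form \(f=\langle w,\Phi(\cdot)\rangle_{\cF}|_{\QQ}\), with \(\norm{w}_{\cF}=\norm{f}_{\cH_k}\) for a suitable minimal-norm representative \(w\). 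So it suffices to show that \(x\mapsto\langle w,\Phi(x)\rangle_{\cF}\) lies in \(C^{r+\theta}(\QQ)\), with norm \(\lesssim\norm{w}_{\cF}\) uniformly in \(w\).

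For the differentiability, fix \(w\in\cF\). The functional \(v\mapsto\langle w,v\rangle_{\cF}\) is bounded and linear, so it commutes with Fréchet derivatives. Hence, on the neighbourhood of \(\QQ\) where \(\Phi\) is \(C^r\), the function \(f\) is \(r\)-times continuously differentiable with \(\partial^\alpha f(x)=\langle w,\partial^\alpha\Phi(x)\rangle_{\cF}\) for \(|\alpha|\le r\). To justify this, I would induct on \(|\alpha|\). The difference quotients of \(\partial^{\alpha}\Phi\) converge in \(\cF\), so pairing with \(w\) passes to the limit.

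Next, \(\QQ\) is compact and each \(\partial^\alpha\Phi\) is continuous into \(\cF\). Therefore \(C_\alpha:=\sup_{x\in\QQ}\norm{\partial^\alpha\Phi(x)}_{\cF}<\infty\), and Cauchy--Schwarz gives \(\sup_{\QQ}|\partial^\alpha f|\le C_\alpha\norm{w}_{\cF}\). When \(\theta>0\), Cauchy--Schwarz together with the assumed Hölder bound gives, for \(|\alpha|=r\),
\[
|\partial^\alpha f(x)-\partial^\alpha f(y)|\le\norm{w}_{\cF}\,\norm{\partial^\alpha\Phi(x)-\partial^\alpha\Phi(y)}_{\cF}\lesssim\norm{w}_{\cF}\norm{x-y}^\theta .
\]
Summing these bounds, and using the conventions of \cref{app:functions}, gives \(\norm{f}_{C^{r+\theta}(\QQ)}\lesssim\norm{f}_{\cH_k}\), which is the claimed embedding.

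The case \(\theta=0\), \(r\ge1\) needs a separate remark, since \(C^{r+0}\) is then an integer-order space and must be read according to the paper's conventions. If it means \(C^{r}\), the bounds above already suffice. If it has to be turned into a Hölder statement, I would use the mean value inequality: \(\partial^\beta f\) with \(|\beta|=r-1\) is Lipschitz, so \(\cH_k\hookrightarrow C^{r-1,1}\). Either way, the rate \(\epsilon_M\lesssim M^{-(r+\theta)}\) then follows from \cref{prop:finite-smoothness-residuals}, using part 2 or part 3 as appropriate. The main obstacle is this bookkeeping: matching the endpoint conventions, and justifying the commuting of derivatives with the pairing on the open neighbourhood so that the derivatives are genuine one-sided derivatives up to the boundary of \(\QQ\). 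The Hilbert-space estimates themselves are routine.
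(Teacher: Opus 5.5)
Your proposal is correct and follows essentially the same route as the paper: represent $f\in\cH_k$ as $\langle v_f,\Phi(\cdot)\rangle_{\cF}$ with $\norm{v_f}_{\cF}=\norm{f}_{\cH_k}$, pass derivatives through the bounded functional $v\mapsto\langle v_f,v\rangle_{\cF}$ on the neighbourhood where $\Phi$ is $C^r$, and conclude by Cauchy--Schwarz and \cref{prop:finite-smoothness-residuals}. The differences are cosmetic. The paper gets the representation directly by applying \cref{lem:feature-space-representation} to $\Phi|_{\QQ}$, not via \cref{lem:rkhs-restriction}. Also, since the paper's convention sets $C^{r+0}:=C^r$, the case $\theta=0$ is covered directly by part~2 of that proposition with $s=r$, so your Lipschitz detour is valid but unnecessary.
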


\begin{proof}
By \cref{lem:feature-space-representation}, for every
\(f\in\cH_k\) there exists \(v_f\in\cF\) such that
\[
  f(x)=\langle v_f,\Phi(x)\rangle_{\cF},
  \qquad
  x\in\QQ,
  \qquad
  \norm{v_f}_{\cF}=\norm{f}_{\cH_k}.
\]
  Let \(W\subset U\) be an open neighbourhood of \(\QQ\) on which \(\Phi\) is \(r\)-times continuously differentiable, and define \(\bar f(x):=\langle v_f,\Phi(x)\rangle_{\cF}\) for \(x\in W\). Then \(\bar f|_{\QQ}=f\). Since \(v\mapsto\langle v_f,v\rangle_{\cF}\) is bounded and linear, repeated application of the chain rule \citep[p.~507 and Lemma~A.5.15(ii)]{steinwart2008support} gives
  \[
    \partial^\alpha\bar f(x)=\langle v_f,\partial^\alpha\Phi(x)\rangle_{\cF},\qquad x\in W,\quad |\alpha|\leq r.
  \]
  Continuity of \(\partial^\alpha\Phi\) on a neighbourhood of the compact set \(\QQ\), followed by Cauchy--Schwarz, yields
  \[
    \norm{\partial^\alpha f}_{L^\infty(\QQ)}\leq\norm{f}_{\cH_k}\sup_{x\in\QQ}\norm{\partial^\alpha\Phi(x)}_{\cF}\lesssim\norm{f}_{\cH_k},\qquad |\alpha|\leq r.
  \]
  If \(\theta>0\) and \(|\alpha|=r\), then
  \[
    \abs{\partial^\alpha f(x)-\partial^\alpha f(y)}\leq\norm{f}_{\cH_k}\norm{\partial^\alpha\Phi(x)-\partial^\alpha\Phi(y)}_{\cF}\lesssim\norm{f}_{\cH_k}\norm{x-y}^{\theta}.
  \]
  Hence \(\norm{f}_{C^{r+\theta}(\QQ)}\lesssim\norm{f}_{\cH_k}\), proving \(\cH_k\hookrightarrow C^{r+\theta}(\QQ)\). The residual bound follows from \cref{prop:finite-smoothness-residuals}.
\end{proof}

  A criterion stated directly in terms of smoothness of the scalar kernel is simpler to verify, but can be less sharp than the feature-map criterion: $C^s$-regularity of the kernel yields only $C^{s/2}$-regularity of the native space.

\begin{proposition}[
  name={},
  restate=CorSmoothKernelResiduals
]
\label{cor:smooth-kernel-residuals}
  Let \(s>0\), let \(U\subset\Rd\) be open with \(\QQ\subset U\), and
  let \(\tilde k\) be a positive semidefinite kernel on \(U\) such that $\tilde k\in C^s(U\times U)$.
  Set $k:=\tilde k|_{\QQ\times\QQ}$.
  Then $\cH_k\hookrightarrow C^{s/2}(\QQ)$,
  and the residual satisfies the bound $\epsilon_M\lesssim M^{-s/2}$. Moreover, $\kappa_0 := \sup_{x\in\QQ}k(x,x) < \infty$.
\end{proposition}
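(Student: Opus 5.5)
The plan is to reduce to \cref{prop:finite-smoothness-residuals} (second branch) by proving the embedding \(\cH_k\hookrightarrow C^{s/2}(\QQ)\) directly from the reproducing property. The key identity is that for \(f\in\cH_k\) and \(x,y\in\QQ\),
\[
  \abs{f(x)-f(y)} = \abs{\langle f,k_x-k_y\rangle_{\cH_k}} \le \norm{f}_{\cH_k}\,\norm{k_x-k_y}_{\cH_k},
  \qquad
  \norm{k_x-k_y}_{\cH_k}^2 = k(x,x)-2k(x,y)+k(y,y).
\]
So the whole task is to bound this kernel "distance" \(D(x,y):=k(x,x)-2k(x,y)+k(y,y)\) by \(\norm{x-y}^{s}\). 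Derivatives will be handled through the feature-map route.

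Write \(s/2=r+\theta\) with \(r\in\N\) and \(\theta\in(0,1]\). There are two regimes.

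\textbf{Case \(s<2\).} Here \(r=0\) and \(\theta=s/2<1\). Set \(g(u):=\tilde k(x,x)-\tilde k(x,u)-\tilde k(u,x)+\tilde k(u,u)\). Then \(g(x)=0\), and \(g\) is \(C^s\) in \(u\) as a composition of the \(C^s\) function \(\tilde k\) with smooth maps.
\begin{itemize}
  \item If \(s\le1\), Hölder continuity of \(\tilde k\) gives \(\abs{g(y)}\lesssim\norm{x-y}^s\) directly.
  \item If \(1<s<2\), first-order derivatives of \(g\) vanish at \(u=x\). This follows by symmetry of \(\tilde k\): \(\partial_u g(u)=-\partial_2\tilde k(x,u)-\partial_1\tilde k(u,x)+(\partial_1+\partial_2)\tilde k(u,u)\), which is zero at \(u=x\). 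A first-order Taylor expansion with \((s-1)\)-Hölder remainder then gives \(\abs{g(y)}\lesssim\norm{x-y}^{s}\).
\end{itemize}
In both sub-cases \(D(x,y)\lesssim\norm{x-y}^s\), hence \(\abs{f(x)-f(y)}\lesssim\norm{f}_{\cH_k}\norm{x-y}^{s/2}\). Adding \(\abs{f(x)}\le\kappa_0^{1/2}\norm{f}_{\cH_k}\) gives the embedding.

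\textbf{Case \(s\ge2\).} Use the canonical feature map \(\Phi(x)=\tilde k_x\in\cH_{\tilde k}\) on a neighbourhood \(W\) of \(\QQ\), and aim to apply \cref{prop:feature-map-holder-native} (or its argument directly, since \(\theta\) may equal \(1\)). The standard result (e.g.\ \citet{steinwart2008support}, Lemma~4.34 and Corollary~4.36) is that if \(\tilde k\) is \(C^{2r}\) in the sense that the mixed derivatives \(\partial^{\alpha,\alpha}\tilde k\) exist and are continuous, then \(\Phi\) is \(r\)-times continuously differentiable. Its derivatives satisfy
\[
  \langle\partial^\alpha\Phi(x),\partial^\beta\Phi(y)\rangle=\partial^{\alpha,\beta}\tilde k(x,y).
\]
Here \(r=\lfloor s/2\rfloor\) up to the integer convention, so \(2r\le s\) and this applies. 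Then
\[
  \norm{\partial^\alpha\Phi(x)-\partial^\alpha\Phi(y)}^2=\partial^{\alpha,\alpha}\tilde k(x,x)-2\partial^{\alpha,\alpha}\tilde k(x,y)+\partial^{\alpha,\alpha}\tilde k(y,y).
\]
This has exactly the form of \(D\), built from \(\kappa:=\partial^{\alpha,\alpha}\tilde k\), which is symmetric and lies in \(C^{s-2r}\) with \(s-2r=2\theta\in(0,2]\). The argument of the first case applies verbatim to \(\kappa\), giving the required \(\norm{x-y}^{\theta}\) Hölder bound on \(\partial^\alpha\Phi\). When \(\theta=1\) we land in \(C^{r,1}\), which is covered by the third branch of \cref{prop:finite-smoothness-residuals}; that rate \(M^{-(r+1)}=M^{-s/2}\) matches.

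The conclusion \(\epsilon_M\lesssim M^{-s/2}\) then follows from \cref{prop:finite-smoothness-residuals}. Boundedness \(\kappa_0<\infty\) follows from continuity of \(\tilde k\) on the compact set \(\QQ\).

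The main obstacle I expect is the endpoint bookkeeping. The interpretation of \(C^s\) for integer \(s\) (as \(C^{s-1,1}\), per the paper's conventions) needs care. So does the Taylor step when \(s-2r=2\), where the second-order remainder must come from Lipschitz first derivatives rather than a Hölder modulus. I would handle this uniformly by the mean value theorem applied twice: once in \(u\) for \(g\), then using that \(\nabla g(x)=0\) together with Lipschitz continuity of \(\nabla g\).
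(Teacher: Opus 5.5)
Your proposal follows the paper's proof. Both represent derivatives of $f$ through $\psi_\alpha(x)=\partial_x^\alpha\tilde k(x,\cdot)$, using either the canonical feature map or an extension $F\in\cH_{\tilde k}$. Both then bound $\norm{\psi_\alpha(x)-\psi_\alpha(y)}^2=g_\alpha(x,x)+g_\alpha(y,y)-2g_\alpha(x,y)$, with $g_\alpha=\partial_x^\alpha\partial_y^\alpha\tilde k$, by $\norm{x-y}^{\tau}$, using the same split: Hölder or mean value for $0<\tau\le1$, and a first-order expansion whose linear term cancels for $1<\tau<2$. For non-integer $s/2$ your proof is complete. Your conclusions $\epsilon_M\lesssim M^{-s/2}$ and $\kappa_0<\infty$ are correct for every $s>0$.

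The one thing to fix is the integer endpoint, where you have misread the convention. In \cref{app:functions} the paper sets $C^s:=C^r$ when $s=r$ is an integer. This is the classical space of functions with continuous derivatives up to order $s$, not $C^{s-1,1}$.

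On the hypothesis side the misreading is harmless, because classical $C^m$ implies $C^{m-1,1}$ on the compact convex set $\QQ\times\QQ$, and that is all you use. On the conclusion side it matters. When $s=2m$, your choice $\theta=1$, $r=m-1$ proves only $\cH_k\hookrightarrow C^{m-1,1}(\QQ)$, which does not give the stated $\cH_k\hookrightarrow C^{m}(\QQ)$. The residual $M^{-m}$ still follows from the Lipschitz branch of \cref{prop:finite-smoothness-residuals}, so only the embedding claim is affected.

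The repair is immediate and is exactly the paper's $\tau=0$ case. Since $\tilde k\in C^{2m}$, apply the feature-map differentiability result, or \cref{lem:derivative-reproducing}, with $r=m$ itself. Every $f\in\cH_k$ then has continuous derivatives up to order $m$, with $\abs{\partial^\alpha f(x)}\le\norm{f}_{\cH_k}(\partial_x^\alpha\partial_y^\alpha\tilde k(x,x))^{1/2}$, so $\cH_k\hookrightarrow C^m(\QQ)$ with no Hölder estimate needed. In short, decompose $s=2r+\tau$ with $\tau\in[0,2)$ rather than $s/2=r+\theta$ with $\theta\in(0,1]$.
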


\begin{proof}[Proof sketch]
  The derivative reproducing formula (\cref{lem:derivative-reproducing}) converts smoothness of \(\tilde k\) into uniform derivative bounds for functions in \(\cH_k\), while increments of the mixed kernel derivatives give the required H\"older control at the highest derivative order. This yields \(\cH_k\hookrightarrow C^{s/2}(\QQ)\), and the residual bound follows from \cref{prop:finite-smoothness-residuals}. See \cref{app:holder-continuity} for the full proof.
\end{proof}

\begin{remark}
  The feature-map criterion gives an alternative derivation of the
  Mat\'ern residual when the smoothness parameter is noninteger. Let
  \(\nu>0\) with \(\nu\notin\Np\), write
  \[
    \nu=r+\theta,
    \qquad
    r:=\lfloor\nu\rfloor,
    \qquad
    0<\theta<1,
  \]
  and let \(\tilde k_\nu\) be a Mat\'ern-\(\nu\) kernel on \(\Rd\), whose spectral density satisfies $s_\nu(\xi) \asymp \langle\xi\rangle^{-(2\nu+d)}$. Consider the weighted $L^2$ space $\cF_\nu := L^2\roundb{\Rd,s_\nu(\xi)\dif\xi}$ and the spectral feature map
  $\Phi_\nu(x)(\xi) := e^{2\pi i\xi\cdot x}$. Then
  $\tilde k_\nu(x,y) = \langle\Phi_\nu(y),\Phi_\nu(x)\rangle_{\cF_\nu}$.
  For every multi-index \(\alpha\) with \(|\alpha|\leq r\), $\partial^\alpha\Phi_\nu(x)(\xi)
    =
    (2\pi i\xi)^\alpha e^{2\pi i\xi\cdot x}$,
  and
  \[
    \sup_{x\in\QQ}
    \norm{\partial^\alpha\Phi_\nu(x)}_{\cF_\nu}^2
    \lesssim
    \int_{\Rd}
    \norm{\xi}^{2|\alpha|}
    \langle\xi\rangle^{-(2\nu+d)}
    \dif\xi
    <
    \infty,
  \]
  since \(|\alpha|\leq r<\nu\). Now fix \(|\alpha|=r\), let \(x,y\in\QQ\), and put
  \(h:=\norm{x-y}\). For \(0<h\leq1\),
  \begin{align*}
    \norm{
      \partial^\alpha\Phi_\nu(x)
      -
      \partial^\alpha\Phi_\nu(y)
    }_{\cF_\nu}^2
    &\lesssim
    \int_{\Rd}
    \norm{\xi}^{2r}
    \roundb{1\wedge h^2\norm{\xi}^2}
    \langle\xi\rangle^{-(2\nu+d)}
    \dif\xi
    \\
    &
    \lesssim
    h^2
    +
    h^2\int_1^{h^{-1}}u^{1-2\theta}\dif u
    +
    \int_{h^{-1}}^\infty u^{-2\theta-1}\dif u
    \lesssim
    h^{2\theta}.
  \end{align*}
  For \(h>1\), the same conclusion follows from the uniform derivative
  bound. Thus
  \[
    \norm{
      \partial^\alpha\Phi_\nu(x)
      -
      \partial^\alpha\Phi_\nu(y)
    }_{\cF_\nu}
    \lesssim
    \norm{x-y}^{\theta}.
  \]
  Applying \cref{prop:feature-map-holder-native} gives $\cH_{k_\nu}\hookrightarrow C^\nu(\QQ)$,
  where \(k_\nu:=\tilde k_\nu|_{\QQ\times\QQ}\).

  At integer values of $\nu$, the above integral is borderline and produces a logarithmic modulus rather than a Lipschitz estimate. The Bessel potential route is therefore cleaner.
\end{remark}

\subsection{Stretched-exponential residuals from ultradifferentiability}

The algebraic residual bounds above use regularity of a fixed order. Bounds on derivatives of every order with controlled factorial growth yield stretched-exponential residuals.

\begin{proposition}[
  name={},
  restate=PropGevreyResiduals
]
\label{prop:gevrey-native-space-residual}
  Let \(\sigma\geq1\), let \(k\) be a kernel on \(\QQ\), and let
  \(\cH_k\) be its RKHS. Suppose that there exists \(A\geq1\) such that
  \[
    \norm{\partial^\alpha f}_{L^\infty(\QQ)}
    \leq
    A^{|\alpha|+1}(\alpha!)^\sigma\norm{f}_{\cH_k}
    \qquad
    \text{for all \(\alpha\in\Nd\) and \(f\in\cH_k\)}.
  \]
  Then, for the grids \(Z_M\) of
  \cref{prop:finite-smoothness-residuals}, there exist constants
  \(C,c>0\) such that
  \[
    \norm{f}_{L^\infty(\QQ)}
    \leq
    C\exp\curlyb{-cM^{1/\sigma}}\norm{f}_{\cH_k}
    \qquad
    \text{for all \(f\in\cH_k\) with \(f|_{Z_M}=0\)}.
  \]
  Thus, in \cref{prop:sampling-majorants}, one may take
  \(\epsilon_M\lesssim\exp\curlyb{-cM^{1/\sigma}}\), and $\sup_{x\in\QQ}k(x,x)<\infty$.
\end{proposition}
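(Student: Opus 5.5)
We prove a sampling inequality for functions that vanish on the grid. The tool is local tensor-product polynomial interpolation of high degree, combined with the Gevrey derivative bounds. Fix \(f\in\cH_k\) with \(f|_{Z_M}=0\) and normalise \(\norm{f}_{\cH_k}=1\).

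\textbf{Step 1: local interpolation.} Choose an integer degree \(m\le M\), to be optimised later. Partition \(\QQ\) into subcubes of side \(\approx m/M\). Each subcube contains a tensor subgrid of \((m+1)^d\) consecutive grid points, spaced \(h=1/M\) apart. If \(M<m\), the residual claim is trivial after enlarging \(C\), since \(\norm{f}_{L^\infty}\le A\) by the \(\alpha=0\) case. On a subcube \(Q\), let \(P\) be the tensor-product Lagrange interpolant of degree \(m\) in each variable at these points. Since \(f\) vanishes there, \(P=0\), so \(f=f-P\) on \(Q\). We apply the one-dimensional interpolation error formula successively in each coordinate, writing \(f-P\) as a telescoping sum of \(d\) terms. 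The \(j\)-th term is bounded by
\[
\frac{\prod_{i}|x_j-t_i|}{(m+1)!}\,\Lambda_m^{\,j-1}\,\norm{\partial_j^{m+1}f}_{L^\infty},
\]
where \(\Lambda_m\) is the Lebesgue constant of equispaced interpolation, with \(\Lambda_m\le 2^{m+1}\). For \(x\) in the convex hull of the nodes, the node product satisfies \(\prod_i|x_j-t_i|\le (m+1)!\,h^{m+1}\). It is cleaner to evaluate only inside the node hull, and this is possible because the subcubes (of side \(m h\)) tile \(\QQ\) with nodes at the subcube corners.

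\textbf{Step 2: Gevrey bound.} Only derivatives \(\partial_j^{m+1}\) along single coordinate directions appear, so \(\alpha!=(m+1)!\) and the hypothesis gives \(\norm{\partial_j^{m+1}f}\le A^{m+2}((m+1)!)^\sigma\). Combining with Step 1,
\[
\norm{f}_{L^\infty(\QQ)}\le d\,2^{d(m+1)}A^{m+2}\,((m+1)!)^{\sigma}\,M^{-(m+1)}.
\]
Using \((m+1)!\le (m+1)^{m+1}\), the right-hand side is at most \(C\,\big(2^dA\,(m+1)^{\sigma}/M\big)^{m+1}\).

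\textbf{Step 3: optimise \(m\).} Take \(m+1=\lfloor (M/(e\,2^dA))^{1/\sigma}\rfloor\). Then the base is at most \(e^{-\sigma}\le e^{-1}\), and the bound becomes \(C\exp\{-(m+1)\}\le C\exp\{-cM^{1/\sigma}\}\) with \(c\asymp (2^dA)^{-1/\sigma}\). Small \(M\), for which this \(m\) would be \(0\), is absorbed into \(C\) through \(\norm{f}_\infty\le A\). The divisibility of \(M\) by \(m\) is handled by letting the last subcube in each direction overlap the previous one, which is allowed since any \(m+1\) consecutive grid points may be used.

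\textbf{Conclusion and main obstacle.} \cref{prop:sampling-majorants} then gives \(\epsilon_M\lesssim\exp\{-cM^{1/\sigma}\}\). The \(\alpha=0\) case gives \(\cH_k\hookrightarrow L^\infty\), and hence \(\sup_x k(x,x)\le A^2\) via \(k(x,x)^{1/2}=\sup_{\norm{f}\le1}|f(x)|\). The main obstacle is controlling the tensor-product interpolation error so that the growth in the degree is only geometric (\(C^m\)). The Lebesgue-constant factor \(2^{dm}\) must be absorbed into \(A\) and not lose factorials. An alternative I would keep in reserve is to avoid Lebesgue constants altogether by a one-dimensional Rolle argument applied coordinate by coordinate. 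If \(f(\cdot,y)\) vanishes at \(m+1\) points along a line, then the interpolation remainder in that coordinate is exact. However, on off-grid lines \(f\) does not vanish, so the telescoping approach above is the cleaner route.
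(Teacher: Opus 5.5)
Your argument is correct, and it takes a different route from the paper.

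\textbf{The paper's route.} The paper builds no interpolant. It converts the $L^\infty$ Gevrey bounds into $L^2$-Sobolev bounds of order $m$ with growth $B^m m^{\sigma m}$, using that $[0,1]^d$ has unit volume and counting the multi-indices with $|\alpha|\le m$. It then invokes the sampling inequality for infinitely smooth functions of \citet[Theorem~4.5]{rieger2010sampling}, with $p=2$, $q=\infty$ and fill distance $h_M\le\sqrt d/(2M)$. Small $M$ are covered by the $\alpha=0$ bound, exactly as you do.

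\textbf{Your route.} Your proof is self-contained and exploits the tensor structure of $Z_M$. You telescope the tensor-product Lagrange error into $d$ one-dimensional remainders. You then absorb the geometric Lebesgue-constant loss against the decay of $h^{m+1}$; the bound $\Lambda_m\le 2^m$ for equispaced nodes holds because $|\ell_i|\le\binom{m}{i}$ on the node hull.

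\textbf{Comparison.} Your route gives explicit constants, for example $c=(e2^dA)^{-1/\sigma}$. It also uses only the pure directional bounds for $\alpha=(m+1)e_j$, together with $\alpha=0$. The paper's route does not depend on the grid structure, since the black-box inequality applies to arbitrary point sets of small fill distance. The price is reliance on an external theorem and the passage through $L^2$-Sobolev norms.

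\textbf{Two small slips.} With your choice $m+1=\lfloor (M/(e2^dA))^{1/\sigma}\rfloor$, the base is at most $e^{-1}$, not $e^{-\sigma}$; $e^{-1}$ is all you need. Also, the degenerate case $m=0$ gives subcubes of side $0$. It occurs only for finitely many $M$, so it should be absorbed into $C$ along with the other small values of $M$.
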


\begin{proof}
For $m\in\Np$, since $\QQ$ has unit volume, $\alpha!\leq|\alpha|!\leq m!$ whenever $|\alpha|\leq m$, and $\card\curlyb{\alpha\in\Nd:|\alpha|\leq m}=\binom{m+d}{d}$, the assumed derivative bounds give
\[
  \curlyb[\bigg]{\sum_{|\alpha|\leq m}\norm{\partial^\alpha f}_{L^2(\QQ)}^2}^{1/2}\leq A^{m+1}\binom{m+d}{d}^{1/2}(m!)^\sigma\norm{f}_{\cH_k}\leq B^m m^{\sigma m}\norm{f}_{\cH_k}
\]
for some $B\geq1$ depending only on $A$ and $d$. Thus the order-$m$ embedding constants satisfy the hypothesis of \citet[Theorem~4.5]{rieger2010sampling} with $s=\sigma$. Writing $h_M:=\sup_{x\in\QQ}\min_{z\in Z_M}\norm{x-z}$, we have $h_M\leq\sqrt d/(2M)$. Applying that theorem with $p=2$ and $q=\infty$, and using $f|_{Z_M}=0$, gives constants $c>0$ and $M_0\in\Np$ such that $\norm{f}_{L^\infty(\QQ)}\leq\exp\curlyb{-cM^{1/\sigma}}\norm{f}_{\cH_k}$ for every $M\geq M_0$. The case $\alpha=0$ gives $\norm{f}_{L^\infty(\QQ)}\leq A\norm{f}_{\cH_k}$, which covers the remaining values of $M$ after increasing the multiplicative constant and gives $\sup_{x\in\QQ}k(x,x)\leq A^2$.
\end{proof}

The derivative condition above is naturally expressed in terms of Gevrey classes. Let $V\subset\R^q$ be open and let $\sigma\geq1$. We say that $g\in C^\infty(V)$ belongs to the Gevrey class $G^\sigma(V)$ if, for every compact $K\subset V$, there exists $C_K\geq1$ such that
\[
  \abs{\partial^\alpha g(x)}\leq C_K^{|\alpha|+1}(\alpha!)^\sigma,\qquad x\in K,\ \alpha\in\N^q.
\]
The class $G^1(V)$ is the real-analytic class. For $\sigma>1$, the condition permits faster derivative growth, and is therefore weaker than analyticity while still controlling derivatives of every order. A positive semidefinite Gevrey extension yields the derivative bounds in \cref{prop:gevrey-native-space-residual}.

\begin{corollary}[
  name={},
  restate=CorGevreyKernel
]
\label{cor:gevrey-kernel-residual}
  Let \(U\subset\Rd\) be open with \(\QQ\subset U\), let \(\sigma\geq1\), and let \(\tk\) be a positive semidefinite kernel on \(U\). Suppose that \(\tk\in G^\sigma(U\times U)\), and set
  \(k:=\tk|_{\QQ\times\QQ}\). Then the hypothesis of \cref{prop:gevrey-native-space-residual} holds. Consequently, $\epsilon_M\lesssim\exp\curlyb{-cM^{1/\sigma}}$ and $\kappa_0:=\sup_{x\in\QQ}k(x,x)<\infty$.
\end{corollary}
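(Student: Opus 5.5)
The plan is to verify the derivative bound of \cref{prop:gevrey-native-space-residual} directly, using a derivative reproducing formula. For \(f\in\cH_k\) and \(\alpha\in\Nd\) we want \(\partial^\alpha f(x)=\langle f,\partial_2^{\alpha}k(\cdot,x)\rangle_{\cH_k}\) on \(\QQ\), together with
\[
  \norm{\partial_2^\alpha k(\cdot,x)}_{\cH_k}^2=\partial_1^\alpha\partial_2^\alpha k(x,x).
\]
Here \(\partial_1,\partial_2\) denote derivatives in the first and second arguments.

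\textbf{Setting up the formula.} Since \(\tk\in G^\sigma(U\times U)\subset C^\infty(U\times U)\), we work first with \(\cH_{\tk}\) on \(U\). The derivative reproducing formula (\cref{lem:derivative-reproducing}, as in \citet{steinwart2008support}) states: for \(\tk\in C^{2m}\), every \(g\in\cH_{\tk}\) is \(C^m\) on \(U\), and \(\partial^\alpha g(x)=\langle g,\partial_2^\alpha\tk(\cdot,x)\rangle_{\cH_{\tk}}\). By \cref{lem:rkhs-restriction}, each \(f\in\cH_k\) has an extension \(g\in\cH_{\tk}\) with \(\norm{g}_{\cH_{\tk}}=\norm{f}_{\cH_k}\); the minimal-norm extension attains the quotient norm. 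For \(x\in\QQ\) we should interpret \(\partial^\alpha f(x)\) as \(\partial^\alpha g(x)\). This is unambiguous on the interior, and by continuity also on the closed cube. Cauchy--Schwarz then gives
\[
  \abs{\partial^\alpha f(x)}\leq \norm{f}_{\cH_k}\roundb{\partial_1^\alpha\partial_2^\alpha\tk(x,x)}^{1/2}.
\]

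\textbf{Gevrey estimate.} Apply the Gevrey bound with the compact set \(K=\QQ\times\QQ\subset U\times U\) and the multi-index \((\alpha,\alpha)\in\N^{2d}\). This gives
\[
  \abs{\partial_1^\alpha\partial_2^\alpha\tk(x,x)}\leq C_K^{2|\alpha|+1}(\alpha!)^{2\sigma}.
\]
Taking square roots yields \(\abs{\partial^\alpha f(x)}\leq C_K^{|\alpha|+1/2}(\alpha!)^\sigma\norm{f}_{\cH_k}\). This is at most \(A^{|\alpha|+1}(\alpha!)^\sigma\norm{f}_{\cH_k}\) with \(A:=C_K\geq1\), which is exactly the hypothesis of \cref{prop:gevrey-native-space-residual}. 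The conclusions \(\epsilon_M\lesssim\exp\curlyb{-cM^{1/\sigma}}\) and \(\kappa_0\leq A^2<\infty\) then follow from that proposition. The bound on \(\kappa_0\) can also be read off the case \(\alpha=0\).

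\textbf{Main obstacle.} The substantive step is justifying the derivative reproducing formula at every order, including at boundary points of \(\QQ\). The approach is to use that \(\QQ\subset U\) with \(U\) open, so all derivatives are taken at interior points of \(U\). For each fixed \(\alpha\), \(\tk\in C^{2|\alpha|}\) is enough to apply the \(C^m\) version of the lemma with \(m=|\alpha|\). The remaining care is to make sure the derivatives of \(f\) used in \cref{prop:gevrey-native-space-residual} coincide with those of the extension \(g\) on \(\QQ\). This holds because any two smooth extensions agree on \(\QQ\), hence on its interior, and so their derivatives agree on the closure by continuity.
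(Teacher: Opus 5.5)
Your proposal is correct and follows essentially the same route as the paper: take the minimal-norm extension \(F\in\cH_{\tk}\) from \cref{lem:rkhs-restriction}, then apply \cref{lem:derivative-reproducing} with Cauchy--Schwarz to get \(\abs{\partial^\alpha f(x)}\leq\norm{f}_{\cH_k}\roundb{\partial_x^\alpha\partial_y^\alpha\tk(x,x)}^{1/2}\), and bound the diagonal mixed derivative by the Gevrey estimate on the compact set \(\QQ\times\QQ\) with multi-index \((\alpha,\alpha)\). Your remarks on boundary derivatives and on needing only \(C^{2|\alpha|}\) smoothness for each fixed \(\alpha\) are sound details that the paper leaves implicit.
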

\begin{proof}
Since $\QQ\times\QQ$ is compact in $U\times U$, there exists $B\geq1$ such that $\abs{\partial_x^\alpha\partial_y^\alpha\tk(x,x)}\leq B^{2|\alpha|+1}(\alpha!)^{2\sigma}$ for all $x\in\QQ$ and $\alpha\in\Nd$. Fix $f\in\cH_k$ and use \cref{lem:rkhs-restriction} to choose $F\in\cH_{\tk}$ such that $F|_{\QQ}=f$ and $\norm{F}_{\cH_{\tk}}=\norm{f}_{\cH_k}$. By \cref{lem:derivative-reproducing},
\[
  \abs{\partial^\alpha f(x)} \leq \norm{F}_{\cH_{\tk}}\norm{\partial_x^\alpha\tk(x,\cdot)}_{\cH_{\tk}} = \norm{f}_{\cH_k}\roundb{\partial_x^\alpha\partial_y^\alpha\tk(x,x)}^{1/2} \leq B^{|\alpha|+1}(\alpha!)^\sigma\norm{f}_{\cH_k}.
\]
Taking the supremum over $x\in\QQ$ verifies the hypothesis of \cref{prop:gevrey-native-space-residual}; the residual and bounded-diagonal conclusions follow.
\end{proof}

For stationary kernels, an exponential moment of the spectral measure yields the Gevrey regularity required above.

\begin{proposition}[
  name={},
  restate=PropExpMomentGev
]
\label{prop:exp-moment-gevrey}
  Let \(0<\gamma\leq1\), let \(\mu\) be a finite symmetric positive
  Borel measure on \(\Rd\), and suppose that, for some \(\tau_0>0\),
  \[
    \int_{\Rd}
    \exp\curlyb{\tau_0\norm{\xi}^{\gamma}}
    \mu(\dif\xi)
    <
    \infty.
  \]
Define $K(h):=\int_{\Rd}e^{2\pi i\xi\cdot h}\mu(\dif\xi)$, $\tk(x,y):=K(x-y)$, and $k:=\tk|_{\QQ\times\QQ}$. Then
  \(\tk\in G^{1/\gamma}(\Rd\times\Rd)\), and the sampling construction
  has residual
  \(\epsilon_M\lesssim\exp\curlyb{-cM^\gamma}\).

  In particular, if \(\mu\) has density
  \(s(\xi)=\exp\curlyb{-\tau\norm{\xi}^{\gamma}}\), where \(\tau>0\),
  then, for every \(n\in\Np\) and \(\rho>0\),
  \[
    \Effd_n^k(\QQ;\rho)
    \lesssim
    \roundb{\log(e+n/\rho)}^{d/\gamma}
    \spaced{and}
    \Gamma_n^k(\QQ;\rho)
    \lesssim
    \roundb{\log(e+n/\rho)}^{d/\gamma+1}.
  \]
\end{proposition}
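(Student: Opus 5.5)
The plan is to verify the Gevrey bound on \(\tk\) directly from the spectral representation, and then chain the results already established.

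\textbf{Step 1: derivative bounds for \(K\).} Since \(\mu\) has an exponential moment, it has moments of every order, so differentiation under the integral is justified. This gives
\[
  \partial^\beta K(h)=\int_{\Rd}(2\pi i\xi)^\beta e^{2\pi i\xi\cdot h}\mu(\dif\xi),
  \qquad
  \abs{\partial^\beta K(h)}\leq(2\pi)^{|\beta|}\int_{\Rd}\norm{\xi}^{|\beta|}\mu(\dif\xi).
\]
To control the moment, write \(m=|\beta|\) and use \(u^m e^{-\tau_0u^\gamma}\leq (m/(\tau_0\gamma e))^{m/\gamma}\), which is the maximum over \(u\geq0\). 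This yields
\[
  \int\norm{\xi}^m\mu(\dif\xi)\leq C_0\,(m/(\tau_0\gamma e))^{m/\gamma},
  \qquad
  C_0:=\int e^{\tau_0\norm{\xi}^\gamma}\dif\mu.
\]
Stirling's formula gives \(m^m\leq e^m m!\). Combined with \(m!\leq d^{m}\beta!\) for \(\beta\in\Nd\), this yields \(\abs{\partial^\beta K(h)}\leq C^{|\beta|+1}(\beta!)^{1/\gamma}\) uniformly in \(h\in\Rd\).

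\textbf{Step 2: transfer to \(\tk\).} Next pass to \(\tk(x,y)=K(x-y)\) on \(\R^{2d}\). For a multi-index \((\alpha,\alpha')\),
\[
  \partial_x^\alpha\partial_y^{\alpha'}\tk(x,y)=(-1)^{|\alpha'|}(\partial^{\alpha+\alpha'}K)(x-y).
\]
Using \((\alpha+\alpha')!\leq 2^{|\alpha|+|\alpha'|}\alpha!\,\alpha'!\) then gives \(\tk\in G^{1/\gamma}(\Rd\times\Rd)\), with constants that are in fact global. Note that \(\sigma=1/\gamma\geq1\), as required.

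\textbf{Step 3: residual bound.} Apply \cref{cor:gevrey-kernel-residual} with \(U=\Rd\) and \(\sigma=1/\gamma\). This gives \(\epsilon_M\lesssim\exp\curlyb{-cM^{\gamma}}\) and \(\kappa_0<\infty\).

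\textbf{Step 4: the explicit density.} If \(s(\xi)=e^{-\tau\norm{\xi}^\gamma}\), the measure is finite and symmetric, and the moment condition holds with \(\tau_0=\tau/2\). Feeding the residual into the stretched-exponential case of \cref{cor:canonical-residual-profiles} with \(\sigma=1/\gamma\) gives \(\Effd\lesssim(\log(e+a))^{d/\gamma}\) and \(\Gamma\lesssim(\log(e+a))^{d/\gamma+1}\). The low-rank-plus-residual decomposition needed there comes from \cref{prop:sampling-majorants}.

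The main obstacle is the moment bookkeeping in Step 1: converting \(m^{m/\gamma}\) into \((\beta!)^{1/\gamma}\) in the multivariate setting without losing more than a geometric factor \(C^{m}\). The inequality \(|\beta|!\leq d^{|\beta|}\beta!\) resolves this, since it follows from the multinomial expansion of \(d^{m}=(1+\cdots+1)^m\).
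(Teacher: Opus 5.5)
Your proposal is correct and follows essentially the same route as the paper. The shared steps are: the moment bound obtained by maximising $u^m e^{-\tau_0 u^\gamma}$; the bookkeeping via $m!\leq d^m\beta!$ and $(\alpha+\beta)!\leq 2^{|\alpha|+|\beta|}\alpha!\beta!$; and then \cref{cor:gevrey-kernel-residual} followed by the stretched-exponential profile with $\sigma=1/\gamma$.

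The only thing the paper adds is an explicit check of two hypotheses of \cref{cor:gevrey-kernel-residual} that you use implicitly. It verifies that $\tk$ is real-valued, by symmetry of $\mu$. It also verifies that $\tk$ is positive semidefinite, via $\sum_{i,j}a_ia_j\tk(x_i,x_j)=\int\abs{\sum_j a_je^{2\pi i\xi\cdot x_j}}^2\mu(\dif\xi)\geq0$.
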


\begin{proof}[Proof sketch]
The exponential moment gives $\int_{\Rd}\norm{\xi}^m\mu(\dif\xi)\leq B^{m+1}(m!)^{1/\gamma}$ for every $m\in\N$. Differentiating the Fourier representation under the integral then gives $\tk\in G^{1/\gamma}(\Rd\times\Rd)$, and the residual and complexity bounds follow from \cref{cor:gevrey-kernel-residual,cor:canonical-residual-profiles}. See \cref{app:stretched-exp-moments} for the details.
\end{proof}

The super-exponential regime $\gamma>1$ is treated through entire extensions in the next section.

\section{Factorial residuals from entire extensions}
\label{sec:factorial-residual}

The factorial regime is obtained by directly approximating an entire extension of the kernel. We work on $\QQpm:=[-1,1]^d$. For \(\varrho>1\), let \(\EE_\varrho\subset\bC\) denote the closed region bounded by the Bernstein ellipse,
\[
  \EE_\varrho
  :=
  \curlyb[\Big]{
    z\in\bC:
    \abs{z-1}+\abs{z+1}
    \leq
    \varrho+\varrho^{-1}
  }.
\]
Its boundary is the image of the circle \(\abs{u}=\varrho\) under the
map \(u\mapsto\frac12(u+u^{-1})\). We write \(\EE_\varrho^q\) for the
\(q\)-fold product of \(\EE_\varrho\).

The following proposition converts control of the extension on products of Bernstein ellipses into the low-rank majorant required by \cref{thm:abstract-approximation-principle}.

\begin{proposition}[
  name={},
  restate=PropEntireMajorants
]
\label{prop:entire-finite-rank-majorants}
  Let \(k\) be a positive semidefinite kernel on \(\QQpm\), and let
  \(\tk\colon\bC^d\times\bC^d\to\bC\) be an entire extension of \(k\).
  For \(\varrho>1\), set
  \[
    B_{\tk}(\varrho)
    :=
    \sup_{z,w\in\EE_\varrho^d}\abs{\tk(z,w)}.
  \]
  Then, for every \(M\in\Np\) and \(\varrho\geq2\), there exist
  positive semidefinite kernels \(K_{M,\varrho}\) and
  \(R_{M,\varrho}\) such that, for every \(n\in\Np\) and
  \(X_n\in(\QQpm)^n\),
  \[
    G_k(X_n)
    \preceq
    G_{K_{M,\varrho}}(X_n)+G_{R_{M,\varrho}}(X_n),
    \qquad
    \rank G_{K_{M,\varrho}}(X_n)\leq M^d,
  \]
  and
  \[
    \sup_{x\in\QQpm}R_{M,\varrho}(x,x)
    \lesssim
    B_{\tk}(\varrho)\varrho^{-M}.
  \]
\end{proposition}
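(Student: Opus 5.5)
The plan is to expand the entire extension \(\tk\) in tensor-product Chebyshev polynomials in all \(2d\) variables. The coefficient matrix of this expansion is a positive semidefinite infinite matrix. I would then split its index set into a "low-degree" block and a "high-degree" block, and dominate the cross terms by the elementary block inequality
\[
  \begin{pmatrix} P & S\\ S^\top & Q\end{pmatrix}\preceq 2\begin{pmatrix} P & 0\\ 0 & Q\end{pmatrix},
\]
which holds for any positive semidefinite block matrix. The low block gives \(K_{M,\varrho}\) and the high block gives \(R_{M,\varrho}\).

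\emph{Step 1 (coefficients).} For \(\alpha\in\Nd\) write \(T_\alpha(x):=\prod_j T_{\alpha_j}(x_j)\), and expand
\[
  k(x,y)=\sum_{\alpha,\beta\in\Nd}c_{\alpha\beta}T_\alpha(x)T_\beta(y),
\]
where
\[
  c_{\alpha\beta}=\nu_\alpha\nu_\beta\iint k(x,y)T_\alpha(x)T_\beta(y)\,w(x)w(y)\dif x\dif y .
\]
Here \(w\) is the product Chebyshev weight and \(\nu_\alpha\leq(2/\pi)^d\) are the normalisations. I would bound the coefficients via the Joukowski substitution \(x_j=\tfrac12(u_j+u_j^{-1})\) in each of the \(2d\) variables. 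This turns \(c_{\alpha\beta}\) into (at most \(2^{2d}\) times) a Laurent coefficient of \(\tk\) composed with the Joukowski maps. Cauchy's estimate on the torus \(|u_j|=\varrho\) then gives
\[
  |c_{\alpha\beta}|\leq 2^{2d}B_{\tk}(\varrho)\varrho^{-|\alpha|-|\beta|}.
\]
Since \(\varrho\geq2\), the series converges absolutely and uniformly on \(\QQpm\times\QQpm\). I expect this multivariate Cauchy/Bernstein estimate to be the main technical point. The thing to check is that the image of the polytorus lies in \(\EE_\varrho^{2d}\), which is exactly what the definition of \(B_{\tk}\) controls.

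\emph{Step 2 (positivity of \(C=(c_{\alpha\beta})\)).} For any finitely supported real vector \(v\), put \(g(x):=\sum_\alpha v_\alpha\nu_\alpha T_\alpha(x)w(x)\). Then
\[
  \sum_{\alpha,\beta} v_\alpha v_\beta c_{\alpha\beta}=\iint k(x,y)g(x)g(y)\dif x\dif y\geq0,
\]
because \(k\) is a continuous positive semidefinite kernel; this follows by approximating the integral with Riemann-type sums over the weighted measure. Hence \(C\) is positive semidefinite, and so is its restriction to every finite principal block. By absolute summability, the bounded operator \(C\) on \(\ell^2\) is positive semidefinite as well.

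\emph{Step 3 (split and estimate).} Let \(\mathcal S:=\{0,\dots,M-1\}^d\) and let \(\mathcal H\) be its complement. Define
\[
  K_{M,\varrho}(x,y):=2\sum_{\alpha,\beta\in\mathcal S}c_{\alpha\beta}T_\alpha(x)T_\beta(y),
  \qquad
  R_{M,\varrho}(x,y):=2\sum_{\alpha,\beta\in\mathcal H}c_{\alpha\beta}T_\alpha(x)T_\beta(y).
\]
Both are positive semidefinite, since each is the Gram form of a positive semidefinite coefficient block composed with a feature map. For a design \(X_n\), let \(\Psi\) denote the (infinite) feature matrix \((T_\alpha(x_i))\). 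Then \(G_k(X_n)=\Psi C\Psi^\top\), and the block inequality applied to \(C\) yields
\[
  G_k(X_n)\preceq G_{K_{M,\varrho}}(X_n)+G_{R_{M,\varrho}}(X_n).
\]
The rank claim is immediate: \(G_{K_{M,\varrho}}(X_n)\) factors through \(\R^{\mathcal S}\), which has dimension \(M^d\).

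For the diagonal, use \(|T_\alpha|\leq1\) on \(\QQpm\) and the coefficient bound from Step 1:
\[
  R_{M,\varrho}(x,x)\leq 2^{2d+1}B_{\tk}(\varrho)\Bigl(\sum_{\alpha\in\mathcal H}\varrho^{-|\alpha|}\Bigr)^2 .
\]
Every \(\alpha\in\mathcal H\) has some coordinate at least \(M\). A union bound over coordinates, together with geometric sums that are uniformly bounded for \(\varrho\geq2\), gives
\[
  \sum_{\alpha\in\mathcal H}\varrho^{-|\alpha|}\lesssim\varrho^{-M}.
\]
Therefore \(R_{M,\varrho}(x,x)\lesssim B_{\tk}(\varrho)\varrho^{-2M}\leq B_{\tk}(\varrho)\varrho^{-M}\), as claimed.
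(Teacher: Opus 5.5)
Your proof is correct, but it handles the cross terms differently from the paper. The paper never uses positive semidefiniteness of $k$ in this step. It bounds $\sum_{\alpha,\beta}a_{\alpha\beta}q_\alpha q_\beta$ by $\tfrac12\sum_{\alpha,\beta}|a_{\alpha\beta}|(q_\alpha^2+q_\beta^2)$. This replaces the coefficient matrix by a diagonal majorant with entries $b_\alpha=\tfrac12\sum_\beta(|a_{\alpha\beta}|+|a_{\beta\alpha}|)$, and the paper then truncates this diagonal kernel at $\{0,\dots,M-1\}^d$. For high-degree $\alpha$, $b_\alpha$ still absorbs the cross coefficients with low-degree $\beta$, so its tail is only of order $B_{\tilde k}(\varrho)\varrho^{-M}$.

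You instead show that the Chebyshev coefficient matrix is itself positive semidefinite and apply the block inequality. The cross block is discarded at the cost of a factor $2$, and the residual contains only high--high coefficients. This gives the sharper bound $B_{\tilde k}(\varrho)\varrho^{-2M}$, although after the choice $\varrho_M=M^{1/p}$ this improves only constants, not the factorial rate. The price is your Step~2, which the paper avoids entirely. Its argument is shorter and works for any real entire kernel, positive semidefinite or not.

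Two small points in Step~2. First, the vectors you actually test against are $q_\alpha=\sum_i v_iT_\alpha(x_i)$. These are bounded but generally not in $\ell^2$; for instance, if some $x_i=(1,\dots,1)$ then $T_\alpha(x_i)=1$ for every $\alpha$. So positivity of $C$ as an operator on $\ell^2$ is not what you use. What you need is nonnegativity of the quadratic form on bounded sequences. This follows from positivity of the finite principal blocks by truncation and dominated convergence, since $\sum_{\alpha,\beta}|c_{\alpha\beta}|<\infty$.

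Second, the Chebyshev weight is singular at the boundary, which makes a direct Riemann-sum argument awkward. The positivity of $\iint k(x,y)g(x)g(y)\,\mathrm{d}x\,\mathrm{d}y$ for $g\in L^1$ is cleanest if you write it as the squared native-space norm of the Bochner integral $\int k(x,\cdot)g(x)\,\mathrm{d}x$.
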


\begin{proof}[Proof sketch]
  Expand the entire function \(\tk(z,w)\) in tensor-product Chebyshev polynomials on \(\EE_\varrho^d\times\EE_\varrho^d\). Holomorphy on this polyellipse gives coefficients \(a_{\alpha,\beta}\) satisfying \(\abs{a_{\alpha,\beta}}\lesssim B_{\tk}(\varrho)\varrho^{-|\alpha|-|\beta|}\). Replacing each mixed term by a diagonal positive semidefinite majorant produces a kernel \(H_\varrho\) whose Gram matrices dominate those of \(k\). Splitting the expansion of \(H_\varrho\) at coordinate degree \(M\) gives a sum of \(M^d\) rank-one kernels and a positive semidefinite remainder whose diagonal is bounded by the geometric tail \(B_{\tk}(\varrho)\varrho^{-M}\). See \cref{app:entire-moments} for the full proof.
\end{proof}

Thus, for any sequence $\varrho_M\geq2$, the hypotheses of \cref{thm:abstract-approximation-principle} hold with
\[
  \epsilon_M^2\lesssim B_{\tk}(\varrho_M)\varrho_M^{-M}.
\]
The factorial profile follows by balancing the approximation factor $\varrho_M^{-M}$ against a finite-order growth bound for $B_{\tk}(\varrho_M)$.

 We say that a kernel \(k\) on $\QQpm$ is entire of finite order if it admits an entire extension \(\tk\colon\bC^d\times\bC^d\to\bC\) for which there exist \(C_0,c_0,p>0\) such that
 \[
    \abs{\tk(z,w)} \leq C_0\exp\curlyb{c_0\max\roundb{\norm{z}_\infty^p, \norm{w}_\infty^p}} \qquad \text{for all } z,w\in\bC^d.
  \]
 If the bound holds for a given \(p\), we say that \(k\) has order at most \(p\).

\begin{proposition}[
  name={},
  restate=CorFiniteOrderResiduals
]
\label{cor:finite-order-entire-residual}
  Let \(k\) be a positive semidefinite kernel on \(\QQpm\) which is
  entire of finite order. Then the hypotheses of
  \cref{thm:abstract-approximation-principle} hold with
  \[
    \epsilon_M
    \lesssim
    \exp\curlyb{-cM\log(e+M)}
  \]
  for some \(c>0\). Moreover,
  \(\kappa_0:=\sup_{x\in\QQpm}k(x,x)<\infty\).
\end{proposition}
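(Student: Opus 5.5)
The plan is to apply \cref{prop:entire-finite-rank-majorants} with a well-chosen $\varrho=\varrho_M\geq2$ for each $M$, and then bound $B_{\tk}(\varrho_M)$ using the finite-order growth assumption.

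\textbf{Growth on the polyellipse.} Every $z\in\EE_\varrho$ satisfies $\abs{z}\leq\frac12(\varrho+\varrho^{-1})\leq\varrho$, so $\norm{z}_\infty\leq\varrho$ on $\EE_\varrho^d$. The order-$p$ bound therefore gives
\[
  B_{\tk}(\varrho)\leq C_0\exp\curlyb{c_0\varrho^p}.
\]
By \cref{prop:entire-finite-rank-majorants}, for every $M$ and every $\varrho\geq2$ we obtain kernels with $\rank G_{K_{M,\varrho}}(X_n)\leq M^d$ and
\[
  \sup_x R_{M,\varrho}(x,x)\lesssim\exp\curlyb{c_0\varrho^p-M\log\varrho}.
\]
We may take $\epsilon_M$ to be the square root of the right-hand side, and condition~1 of \cref{thm:abstract-approximation-principle} holds directly.

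\textbf{Balancing.} We choose $\varrho_M:=\max\curlyb{2,(M/(2c_0p))^{1/p}}$. The unconstrained minimiser of $c_0\varrho^p-M\log\varrho$ is $\varrho=(M/(c_0p))^{1/p}$. Using the slightly smaller value above ensures $c_0\varrho_M^p\leq M/(2p)$, whenever the maximum is attained by the second term.

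For $M$ large we then get
\[
  c_0\varrho_M^p-M\log\varrho_M\leq\frac{M}{2p}-\frac{M}{p}\log\frac{M}{2c_0p}=-\frac{M}{p}\log M+O(M).
\]
This is at most $-c'M\log(e+M)$ for $M\geq M_0$. For the finitely many $M<M_0$ we take $\varrho=2$ and absorb the finitely many values into the implicit constant, since $\exp\curlyb{-cM\log(e+M)}$ is bounded below on $M<M_0$.

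Taking square roots gives $\epsilon_M\lesssim\exp\curlyb{-(c'/2)M\log(e+M)}$, which is the stated profile. This balancing is the only real step. The main subtlety is making sure the constants hidden in \cref{prop:entire-finite-rank-majorants} are uniform in $\varrho\geq2$; its statement provides this, since $\lesssim$ there is independent of $M$ and $\varrho$.

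\textbf{Bounded diagonal.} $\QQpm$ is compact and $\tk$ is entire, hence continuous, so $k(x,x)$ is bounded. Concretely, $\QQpm\subset\EE_2^d$, which gives
\[
  \kappa_0\leq B_{\tk}(2)\leq C_0e^{c_02^p}<\infty.
\]
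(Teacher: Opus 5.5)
Your proposal is correct and follows the paper's argument. You bound $B_{\tk}(\varrho)\leq C_0e^{c_0\varrho^p}$ on the polyellipse, apply \cref{prop:entire-finite-rank-majorants} with $\varrho_M\asymp M^{1/p}$ for large $M$, absorb the finitely many small $M$ into the constant, and get the bounded diagonal from continuity. The only difference is cosmetic: the paper takes $\varrho_M=\max\{2,M^{1/p}\}$ and uses $c_0M\leq\frac{M}{2p}\log M$ for large $M$, whereas your rescaled choice $\varrho_M=\max\{2,(M/(2c_0p))^{1/p}\}$ performs the same balancing with slightly better constants.
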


\begin{proof}
  Let \(\tk\) be an entire extension for which
  \[
    \abs{\tk(z,w)}
    \leq
    C_0
    \exp\curlyb{
      c_0\max\roundb{
        \norm{z}_\infty^p,
        \norm{w}_\infty^p
      }
    },
    \qquad
    z,w\in\bC^d.
  \]
  If \(z\in\EE_\varrho^d\), then $\norm{z}_\infty \leq \frac12\roundb{\varrho+\varrho^{-1}} \leq \varrho$. Consequently,
  \[
    B_{\tk}(\varrho) \leq C_0\exp\curlyb{c_0\varrho^p},
    \qquad \varrho>1.
  \]

  For \(M\in\Np\), set $\varrho_M := \max\curlyb{2,M^{1/p}}$. By \cref{prop:entire-finite-rank-majorants}, the hypotheses of
  \cref{thm:abstract-approximation-principle} hold with
  \[
    \epsilon_M^2
    \leq
    C_dC_0
    \exp\curlyb{
      c_0\varrho_M^p-M\log\varrho_M
    }.
  \]
  For all sufficiently large \(M\), \(\varrho_M=M^{1/p}\), and hence
  \[
    c_0\varrho_M^p-M\log\varrho_M
    =
    c_0M-\frac{M}{p}\log M
    \leq
    -\frac{M}{2p}\log M.
  \]
  Taking square roots, changing constants to cover the finitely many
  remaining values of \(M\), and replacing \(\log M\) by
  \(\log(e+M)\), we obtain $\epsilon_M \lesssim \exp\curlyb{-cM\log(e+M)}$ for some \(c>0\).

  Finally, \(k\) is continuous on
  \(\QQpm\times\QQpm\), since it is the restriction of the entire
  function \(\tk\). Therefore
  $\kappa_0 = \max_{x\in\QQpm}k(x,x) < \infty$.
\end{proof}

For stationary kernels, a super-exponential moment of the spectral measure gives the required finite-order extension.

\begin{proposition}[
  name={},
  restate=PropStatFiniteOrder
]
\label{prop:stationary-entire-finite-order}
  Let \(k\) be a stationary kernel on \(\Rd\) with spectral measure
  \(\mu\). Suppose that, for some \(\tau_0>0\) and \(\gamma>1\),
  \[
    \int_{\Rd}
    \exp\curlyb{\tau_0\norm{\xi}^{\gamma}}
    \mu(\dif\xi)
    <
    \infty.
  \]
  Then the restriction of \(k\) to
  \(\QQpm\times\QQpm\) is entire of finite order at most
  \(p:=\gamma/(\gamma-1)\), and the construction of
  \cref{prop:entire-finite-rank-majorants} has residual $\epsilon_M
    \lesssim
    \exp\curlyb{-cM\log(e+M)}$.
\end{proposition}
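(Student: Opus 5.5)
The plan is to extend $K(h)=\int e^{2\pi i\xi\cdot h}\mu(\dif\xi)$ to $\bC^d$ by the same integral and control its growth. Define
\[
  \tk(z,w):=\int_{\Rd}e^{2\pi i\xi\cdot(z-w)}\mu(\dif\xi),\qquad z,w\in\bC^d .
\]
The integrand has modulus $\exp\curlyb{-2\pi\,\xi\cdot\operatorname{Im}(z-w)}\leq\exp\curlyb{2\pi\norm{\xi}\norm{\operatorname{Im}(z-w)}}$. Set $t:=2\pi\norm{\operatorname{Im}(z-w)}$, which satisfies $t\leq 4\pi\sqrt d\max(\norm{z}_\infty,\norm{w}_\infty)$.

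The key pointwise inequality is Young's inequality for the conjugate exponents $\gamma$ and $p=\gamma/(\gamma-1)$. For any $\tau_0>0$ it gives
\[
  t\norm{\xi}\leq\tau_0\norm{\xi}^\gamma+C_{\gamma,\tau_0}\,t^{p}.
\]
Integrating against $\mu$ then yields
\[
  \abs{\tk(z,w)}\leq e^{C t^p}\int e^{\tau_0\norm{\xi}^\gamma}\mu(\dif\xi)\leq C_0\exp\curlyb{c_0\max(\norm{z}_\infty^p,\norm{w}_\infty^p)}.
\]
This is exactly the finite-order bound with order $p$. It also shows the integral converges absolutely and locally uniformly on $\bC^d\times\bC^d$.

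Next I verify that $\tk$ is entire. The integrand is entire in $(z,w)$ for each fixed $\xi$ and is dominated on compacts by the integrable function $C e^{\tau_0\norm{\xi}^\gamma}$. So holomorphy in each variable follows from Morera's theorem together with Fubini, or equivalently from differentiation under the integral sign, and Hartogs' theorem upgrades this to joint holomorphy. On real arguments, $\tk$ agrees with $k(x-y)$, so it is an entire extension of the restriction to $\QQpm\times\QQpm$. The residual claim is then immediate from \cref{cor:finite-order-entire-residual}, which also supplies the construction of \cref{prop:entire-finite-rank-majorants}.

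The only real step is the Young-inequality growth bound; the rest is routine. I should also check that the measurability and positivity assumptions on $\mu$ (finite, symmetric) are enough, and symmetry is not even needed for the extension.
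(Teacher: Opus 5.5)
Your proposal is correct and follows the paper's proof essentially step for step: the same integral extension $\tk(z,w)=\int e^{2\pi i\xi\cdot(z-w)}\mu(\dif\xi)$, the same Young-inequality bound with conjugate exponents $\gamma$ and $p$ combined with $\norm{\operatorname{Im}(z-w)}\lesssim\max(\norm{z}_\infty,\norm{w}_\infty)$, and the same appeal to \cref{cor:finite-order-entire-residual} for the residual. The only difference is cosmetic: the paper applies Young with $\tau_0/2$ so that the polynomial factors from differentiating under the integral can be absorbed, whereas your Morera--Fubini route (upgraded by Hartogs or Osgood) needs only domination of the integrand itself, so using the full $\tau_0$ costs you nothing.
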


\begin{proof}
  Proof given in \cref{app:entire-moments}.
\end{proof}

The model spectral densities in the super-exponential regime therefore give the following bounds.

  \begin{corollary}
Let $k$ be the stationary kernel on $\Rd$ generated by the spectral density
\[
  s(\xi)=\exp\curlyb{-\tau\norm{\xi}^{\gamma}},\qquad \gamma>1,\ \tau>0.
\]
 Then, for every \(n\in\Np\) and \(\rho>0\), writing
  \(a:=n/\rho\) and \(L(a):=\log(e^e+a)\),
  \[
    \Effd_n^k(\QQ;\rho)
    \lesssim
    L(a)^d\roundb{\log L(a)}^{-d}
    \spaced{and}
    \Gamma_n^k(\QQ;\rho)
    \lesssim
    L(a)^{d+1}\roundb{\log L(a)}^{-d}.
  \]
\end{corollary}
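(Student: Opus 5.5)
The plan is to combine \cref{prop:stationary-entire-finite-order}, \cref{cor:finite-order-entire-residual} and the factorial branch of \cref{cor:canonical-residual-profiles}. There is one mismatch to handle first: those results live on $\QQpm=[-1,1]^d$, whereas the corollary is stated on $\QQ=[0,1]^d$.

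\emph{Step 1: move from $\QQ$ to $\QQpm$.} Since $\QQ\subset\QQpm$, every design in $\QQ^n$ is also a design in $(\QQpm)^n$. The suprema defining $\Effd_n^k$ and $\Gamma_n^k$ are therefore monotone under enlargement of the design set, so
\[
  \Effd_n^k(\QQ;\rho)\leq\Effd_n^k(\QQpm;\rho)
  \spaced{and}
  \Gamma_n^k(\QQ;\rho)\leq\Gamma_n^k(\QQpm;\rho).
\]
It therefore suffices to prove the bounds for the restriction of $k$ to $\QQpm\times\QQpm$.

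\emph{Step 2: check the moment hypothesis.} The spectral measure is $\mu(\dif\xi)=\exp\curlyb{-\tau\norm{\xi}^\gamma}\dif\xi$, which is finite and symmetric. Take $\tau_0:=\tau/2$. Then
\[
  \int_{\Rd}\exp\curlyb{\tau_0\norm{\xi}^\gamma}\mu(\dif\xi)=\int_{\Rd}\exp\curlyb{-\tfrac{\tau}{2}\norm{\xi}^\gamma}\dif\xi<\infty.
\]
So \cref{prop:stationary-entire-finite-order} applies with $\gamma>1$. It shows that the restriction of $k$ to $\QQpm$ is entire of order at most $\gamma/(\gamma-1)$.

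\emph{Step 3: obtain the residuals and the diagonal bound.} By \cref{cor:finite-order-entire-residual}, the hypotheses of \cref{thm:abstract-approximation-principle} then hold on $\QQpm$ with $\epsilon_M\lesssim\exp\curlyb{-cM\log(e+M)}$. The same result gives $\kappa_0<\infty$. More directly, stationarity gives $k(x,x)=\int s=\mu(\Rd)$.

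\emph{Step 4: read off the rates.} The factorial branch of \cref{cor:canonical-residual-profiles}, applied on $\QQpm$, yields
\[
  \Effd_n^k(\QQpm;\rho)\lesssim L(a)^d\roundb{\log L(a)}^{-d}
  \spaced{and}
  \Gamma_n^k(\QQpm;\rho)\lesssim L(a)^{d+1}\roundb{\log L(a)}^{-d}.
\]
Step 1 transfers both bounds to $\QQ$.

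\emph{Where care is needed.} The only real point is the domain mismatch, and monotonicity in the design set disposes of it. One could also translate and rescale, since stationarity and the spectral-density form are preserved up to changing $\tau$, but this is unnecessary. Everything else is a direct invocation of earlier results. The implicit constants depend only on $d,\gamma,\tau$, so they are independent of $n$ and $\rho$, as the notation $\lesssim$ requires.
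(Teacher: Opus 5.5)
Your proposal is correct and follows the paper's proof. You verify the moment condition with $\tau_0=\tau/2$, use the finite-order entire extension to get factorial residuals on $[-1,1]^d$, and read off the rates from the factorial branch of the canonical-profile corollary. The only cosmetic difference is the domain step: the paper restricts the majorants to $[0,1]^d$ before applying the corollary, while you apply it on $[-1,1]^d$ and pass to $[0,1]^d$ by monotonicity of the suprema in the design set, which is equivalent.
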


\begin{proof}
The moment condition holds with $\tau_0=\tau/2$, so \cref{prop:stationary-entire-finite-order} gives factorial residuals on $\QQpm$. Restricting the resulting majorants to $\QQ$ and applying \cref{cor:canonical-residual-profiles} gives the displayed bounds.
\end{proof}

\section{Lower bounds for stationary kernels}
\label{sec:main-lower-bounds}

The upper bounds follow by showing that every Gram matrix is
approximately supported on a low-dimensional subspace. For the lower
bounds, we construct designs whose Gram matrices are bounded below on
a high-dimensional subspace. The following proposition converts such
a subspace into lower bounds for effective dimension and
information gain.

\begin{proposition}
\label{prop:resolved-directions}
  Let \(A\in\bC^{n\times n}\) be positive semidefinite, let
  \(1\leq m\leq n\), and let \(U\in\bC^{n\times m}\) satisfy
  \(U^*U=I_m\). Suppose that, for some \(\rho,\theta>0\), $U^*AU \succeq \rho\theta I_m$. Then
  \[
    \tr\roundb[]{A(A+\rho I_n)^{-1}}
    \geq
    m\frac{\theta}{1+\theta}
    \spaced{and}
    \log\det\roundb{I_n+\rho^{-1}A}
    \geq
    m\log(1+\theta).
  \]
\end{proposition}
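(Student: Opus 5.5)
Both inequalities follow by comparing $A$ with a compression onto the range of $U$. Let $P:=UU^*$ be the orthogonal projection onto $\operatorname{ran}(U)$. I would work spectrally: both quantities are sums of increasing concave functions of the eigenvalues of $A$, namely $f(\lambda)=\lambda/(\lambda+\rho)$ and $g(\lambda)=\log(1+\lambda/\rho)$. The plan is to show that $A$ has at least $m$ eigenvalues that are at least $\rho\theta$. Then
\[
  \tr F_\rho(A)\geq m f(\rho\theta)=m\frac{\theta}{1+\theta}
  \qquad\text{and}\qquad
  \log\det(I_n+\rho^{-1}A)\geq m\log(1+\theta),
\]
since all other terms are nonnegative.

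\textbf{Key step.} The main point is the eigenvalue count, which I would get from the Courant--Fischer min--max principle. Order the eigenvalues of $A$ as $\lambda_1\geq\dots\geq\lambda_n$. Then
\[
  \lambda_m(A)=\max_{\dim S=m}\ \min_{v\in S,\ \norm{v}=1} v^*Av .
\]
Take $S=\operatorname{ran}(U)$, which has dimension $m$ because $U^*U=I_m$. Any unit $v\in S$ can be written $v=Uw$ with $\norm{w}=1$, so
\[
  v^*Av=w^*U^*AUw\geq\rho\theta .
\]
Hence $\lambda_m(A)\geq\rho\theta$, and so $\lambda_j\geq\rho\theta$ for all $j\leq m$. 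Monotonicity of $f$ and $g$, together with $f,g\geq0$ on the remaining eigenvalues, finishes both claims.

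\textbf{Alternative for the log-determinant.} One could avoid eigenvalue counting here by using $\det(I_n+\rho^{-1}A)\geq\det(I_m+\rho^{-1}U^*AU)$. This holds by eigenvalue interlacing for compressions (Cauchy interlacing, $\lambda_j(U^*AU)\leq\lambda_j(A)$), and the right-hand side is at least $(1+\theta)^m$. The same interlacing argument gives the trace bound directly.

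None of this is hard. The only care needed is that the min--max and interlacing results are applied in the Hermitian complex setting, and that $U$ having orthonormal columns makes $\operatorname{ran}(U)$ genuinely $m$-dimensional.
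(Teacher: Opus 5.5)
Your proof is correct and is essentially the paper's argument. The paper also applies Courant--Fischer with the maximisation restricted to subspaces of $\operatorname{ran}(U)$, obtaining $\lambda_j(A)\geq\lambda_j(U^*AU)\geq\rho\theta$ for $j\in[m]$, and then drops the remaining nonnegative terms in both sums; your single choice $S=\operatorname{ran}(U)$ to bound $\lambda_m(A)$ is the same idea.
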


\begin{proof}
  Let $\lambda_1\geq\cdots\geq\lambda_n\geq0$ and $\mu_1\geq\cdots\geq\mu_m\geq0$ denote the eigenvalues of \(A\) and \(U^*AU\), respectively. By the Courant--Fischer formula, for \(j\in[m]\),
  \[
    \lambda_j = \max_{\substack{V\subset\bC^n\\ \dim V=j}} \min_{\substack{v\in V\\ \norm{v}=1}} v^*Av \geq \max_{\substack{V\subset\operatorname{ran}(U)\\ \dim V=j}} \min_{\substack{v\in V\\ \norm{v}=1}} v^*Av = \mu_j.
  \]
  The hypothesis gives \(\mu_j\geq\rho\theta\), and hence
  \(\lambda_j\geq\rho\theta\), for every \(j\in[m]\). Therefore
  \[
    \tr\roundb[]{A(A+\rho I_n)^{-1}} = \sum_{j=1}^n\frac{\lambda_j}{\lambda_j+\rho} \geq \sum_{j=1}^m\frac{\lambda_j}{\lambda_j+\rho} \geq m\frac{\theta}{1+\theta},
  \]
  and
  \[
    \log\det\roundb{I_n+\rho^{-1}A} = \sum_{j=1}^n\log\roundb{1+\lambda_j/\rho} \geq m\log(1+\theta). \qedhere
  \]
\end{proof}

We compare each kernel with a stationary witness kernel. The following lemma transfers pointwise domination of spectral densities to Gram-matrix domination.

\begin{lemma}[Spectral domination]
\label{lem:spectral-domination}
  Let \(k\) and \(K\) be stationary kernels on \(\Rd\) with spectral
  densities \(s\) and \(S\), respectively. Suppose that, for some
  \(c_0>0\),
  \[
    s(\xi)
    \geq
    c_0S(\xi)
    \qquad
    \text{for almost every }\xi\in\Rd.
  \]
  Then, for every \(n\in\Np\) and \(X_n\in(\Rd)^n\),
  \[
    G_k(X_n)
    \succeq
    c_0G_K(X_n).
  \]
\end{lemma}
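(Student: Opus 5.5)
The plan is to write the quadratic form of the Gram matrix as an integral against the spectral density and compare the two integrands pointwise. Fix \(n\in\Np\), \(X_n=(x_1,\dots,x_n)\in(\Rd)^n\), and a vector \(v\in\bC^n\). Using the representation \(k(h)=\int_{\Rd}e^{2\pi i\xi\cdot h}s(\xi)\dif\xi\) with \(h=x_i-x_j\), we get
\[
  v^*G_k(X_n)v=\sum_{i,j=1}^n\bar v_iv_jk(x_i-x_j)
  =\int_{\Rd}\abs[\Big]{\sum_{i=1}^n \bar v_i e^{2\pi i\xi\cdot x_i}}^2 s(\xi)\dif\xi .
\]
The interchange of the finite sum and the integral is justified because \(s\in L^1\), the spectral measure being finite by Bochner's theorem. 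The same identity holds for \(K\) with \(S\) in place of \(s\).

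The integrand \(\abs{\sum_i\bar v_ie^{2\pi i\xi\cdot x_i}}^2\) is nonnegative. Multiplying the almost-everywhere inequality \(s\geq c_0S\) by it and integrating gives \(v^*G_k(X_n)v\geq c_0\,v^*G_K(X_n)v\). Since \(v\) was arbitrary, this is exactly \(G_k(X_n)\succeq c_0G_K(X_n)\) in the Hermitian sense, and because both matrices are real symmetric, it also holds over \(\R^n\).

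No step here is a genuine obstacle. The only care needed is sign and conjugation bookkeeping: one must choose \(\bar v_i\) versus \(v_i\) consistently so that the double sum factors as a modulus squared.
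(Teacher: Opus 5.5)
Your proof is correct and follows the paper's own argument. Both write $v^*\bigl(G_k(X_n)-c_0G_K(X_n)\bigr)v$ as the integral of $s-c_0S$ against the nonnegative weight $\bigl\lvert\sum_j v_j e^{-2\pi i\xi\cdot x_j}\bigr\rvert^2$, then use the almost-everywhere inequality $s\geq c_0S$. Your conjugation bookkeeping is also correct: the modulus you obtain equals the paper's, since one expression is the complex conjugate of the other.
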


\begin{proof}
  Fix \(X_n=(x_1,\dots,x_n)\in(\Rd)^n\). By the
  spectral representations of \(k\) and \(K\),
  \[
    v^*
    \roundb{G_k(X_n)-c_0G_K(X_n)}
    v
    =
    \int_{\Rd}
    \roundb{s(\xi)-c_0S(\xi)}
    \abs{
      \sum_{j=1}^n
      v_j e^{-2\pi i\xi\cdot x_j}
    }^2
    \dif\xi
    \geq
    0
  \]
  for every \(v\in\bC^n\). This proves the claimed matrix inequality.
\end{proof}

The algebraic and stretched-exponential bounds use Fourier directions on a uniform lattice. For bounded values of $a=n/\rho$, all three results follow by taking all design points equal; the sketches below describe the large-$a$ constructions.

\begin{theorem}[
  name={},
  restate=ThmPolyLB
]
\label{thm:polynomial-lower-bound}
  Let \(r>d\), and let \(k\) be a stationary kernel on \(\Rd\) with
  spectral density \(s\). Suppose that there exists \(c_0>0\) such
  that
  \[
    s(\xi)
    \geq
    c_0\langle\xi\rangle^{-r}
    \qquad
    \text{for almost every }\xi\in\Rd.
  \]
  Then there exists \(C>0\) such that, for every
  \(n\in\Np\) and \(1 \leq \rho \leq n\),
  writing \(a:=n/\rho\),
  \[
    \Effd_n^k(\QQ;\rho)
    \geq
    Ca^{d/r}
    \spaced{and}
    \Gamma_n^k(\QQ;\rho)
    \geq
    Ca^{d/r}.
  \]
\end{theorem}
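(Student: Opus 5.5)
The plan is to build the design on a uniform grid in \(\QQ\), repeat each grid point many times, and apply \cref{prop:resolved-directions} on the subspace of vectors that are constant on each block of repeated points. For bounded \(a\) (say \(a\le a_0\)), take all design points equal. Then \(G_k(X_n)=k(0)\,\mathbf 1\mathbf 1^\top\), which has eigenvalue \(nk(0)\), and \(k(0)=\int s>0\). Since \(nk(0)/\rho\ge k(0)\), this eigenvalue gives effective dimension and information gain bounded below by a positive constant. That constant is \(\gtrsim a^{d/r}\) when \(a\le a_0\). So assume \(a\) is large.

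For \(N\in\Np\), let \(Z:=N^{-1}\{0,\dots,N-1\}^d\subset\QQ\), which has \(N^d\) points. The key estimate is a lower bound on \(G_k(Z)\). For \(v\in\bC^{N^d}\) indexed by \(j\in\{0,\dots,N-1\}^d\), put \(P(\xi):=\sum_j v_je^{-2\pi i\xi\cdot j/N}\). By the spectral representation,
\[
v^*G_k(Z)v=\int_{\Rd}s(\xi)\abs{P(\xi)}^2\dif\xi .
\]
The function \(P\) is \(N\)-periodic in each coordinate. Periodising over the cell \(C_N:=[-N/2,N/2)^d\) gives
\[
v^*G_k(Z)v=\int_{C_N}\Big(\sum_{\ell\in\Z^d}s(\xi+N\ell)\Big)\abs{P(\xi)}^2\dif\xi\ \ge\ \int_{C_N}s(\xi)\abs{P(\xi)}^2\dif\xi .
\]
On \(C_N\) we have \(s(\xi)\ge c_0\langle\xi\rangle^{-r}\gtrsim N^{-r}\). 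Parseval on the period cell gives \(\int_{C_N}\abs{P}^2=N^d\norm{v}^2\). Together these yield \(G_k(Z)\succeq cN^{d-r}I_{N^d}\) with \(c\) independent of \(N\). This is the step I expect to need the most care: the aliasing/periodisation argument. It is precisely where the lower bound \(s\gtrsim\langle\xi\rangle^{-r}\) is used, and only on the frequency box of side \(N\).

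Next, set \(q:=\lfloor n/N^d\rfloor\). Let the design consist of \(q\) copies of each point of \(Z\), with the remaining \(n-qN^d\) points placed arbitrarily in \(\QQ\). Let \(U\in\bC^{n\times N^d}\) have columns \(q^{-1/2}\) times the indicator of each block of copies, and zero on the leftover points. Then \(U^*U=I\) and
\[
U^*G_k(X_n)U=q\,G_k(Z)\succeq cqN^{d-r}I\gtrsim nN^{-r}I .
\]
Here the last step uses \(q\ge n/(2N^d)\), which holds once \(N^d\le n\).

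Finally, choose \(N:=\lfloor (c'a)^{1/r}\rfloor\) with \(c'\) small. Then \(nN^{-r}\gtrsim\rho\), so the hypothesis of \cref{prop:resolved-directions} holds with a fixed \(\theta>0\). The constraint \(N^d\le n\) is also satisfied, because \(N^d\lesssim a^{d/r}\le a\le n\) (using \(r>d\) and \(\rho\ge1\)). Moreover \(N\ge1\) for \(a\) large. \cref{prop:resolved-directions} then gives both quantities \(\gtrsim m=N^d\asymp a^{d/r}\). The remaining work is bookkeeping with floors and constants.
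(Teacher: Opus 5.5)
Your proof is correct, and it takes a genuinely different route from the paper.

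\textbf{The paper's route.} It first replaces $k$ by a compactly supported witness kernel $K$ with $S\asymp\langle\xi\rangle^{-r}$ (\cref{lem:algebraic-witness}, built as $\eta K_0$ via Peetre's inequality), and transfers the bound back through \cref{lem:spectral-domination}. It then places about $n$ \emph{distinct} points on the fine lattice $\Lambda_L$, $L=\lfloor n^{1/d}\rfloor$, where compact support makes the $2$-periodisation exact. Poisson summation and \cref{prop:lattice-fourier-directions} then show that each of the $M^d$ low-frequency discrete Fourier vectors, $M\asymp a^{1/r}$, has Rayleigh quotient $\gtrsim n_0S(m)\gtrsim nM^{-r}\gtrsim\rho$.

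\textbf{Your route.} You use only $N^d\asymp a^{d/r}$ distinct points on a coarse grid. You bound $\lambda_{\min}(G_k(Z))\gtrsim N^{d-r}$ directly by folding $|P|^2$ onto the Nyquist cell and applying Parseval; this is a grid version of the usual separation-distance lower bound for kernel matrices. You then recover the factor $n$ by replicating each point $q\asymp n/N^d$ times, which is allowed because the supremum is over $\cX^n$.

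\textbf{What each buys.} Your argument works with $s$ itself, so it needs no witness kernel, no compact-support construction and no Poisson summation. It also carries over almost verbatim to the stretched-exponential regime. There, on $C_N$ one has $s\gtrsim e^{-\tau(\sqrt d N/2)^\gamma}$, and choosing $N\asymp(\log a)^{1/\gamma}$ gives $\theta\gtrsim a^{1/2}$. This would bypass both the spatial decay estimate of \cref{lem:stretched-exp-dual} and the periodisation-error bound in that proof. The paper's route instead yields designs with $\asymp n$ distinct, quasi-uniform points, and it fits a single lattice-Fourier framework shared by both Fourier-based lower bounds.

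One bookkeeping point: take $c'\le 1$. Then $N^d\le (c'a)^{d/r}\le a\le n$ holds with $\le$ rather than $\lesssim$, which is what the step $q\ge n/(2N^d)$ requires.
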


\begin{proof}[Proof sketch]
We compare $k$ with a stationary witness kernel $K$ whose spectral density satisfies $S(\xi)\asymp\langle\xi\rangle^{-r}$ and whose spatial kernel is supported in $(-1,1)^d$. By \cref{lem:spectral-domination}, it suffices to treat $K$. On a uniform lattice with $n_0\asymp n$ points, its Gram matrix agrees with that of its $2$-periodisation. On this lattice, the Fourier mode $2m$ agrees with the direction indexed by $m$, and its coefficient is $2^{-d}S(m)$. Taking $\cM=\{0,\ldots,M-1\}^d$ with $M\asymp a^{1/r}$ gives $n_0 2^{-d}S(m)\gtrsim\rho$ for every $m\in\cM$ and $\card(\cM)\asymp a^{d/r}$. The corresponding orthonormal directions therefore satisfy the hypothesis of \cref{prop:resolved-directions} with $\theta\gtrsim1$. See \cref{app:algebraic-lb} for the full proof.
\end{proof}

The same lattice construction applies in the stretched-exponential regime, with spatial decay controlling the error introduced by periodisation.

\begin{theorem}[
  name={},
  restate=ThmGenExpLB
]
\label{thm:generalised-exp-lower-bound}
  Let \(0<\gamma\leq1\), let \(\tau>0\), and let \(k\) be a stationary
  kernel on \(\Rd\) with spectral density \(s\). Suppose that there
  exists \(c_0>0\) such that
  \[
    s(\xi)
    \geq
    c_0\exp\curlyb{-\tau\norm{\xi}^{\gamma}}
    \qquad
    \text{for almost every }\xi\in\Rd.
  \]
  Then there exists \(C>0\) such that, for every
  \(n\in\Np\) and \(1 \leq \rho \leq n\),
  writing \(a:=n/\rho\),
  \[
    \Effd_n^k(\QQ;\rho)
    \geq
    C\roundb{\log(e+a)}^{d/\gamma}
    \spaced{and}
    \Gamma_n^k(\QQ;\rho)
    \geq
    C\roundb{\log(e+a)}^{d/\gamma+1}.
  \]
\end{theorem}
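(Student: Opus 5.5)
We follow the lattice strategy of \cref{thm:polynomial-lower-bound}, with a witness kernel whose spectral density is comparable to $\exp\curlyb{-\tau\norm{\xi}^\gamma}$ but which is not compactly supported in space. A compactly supported witness cannot have such a rapidly decaying spectrum at $\gamma=1$ (for $\gamma<1$ one could use Gevrey-type bump functions, but we avoid this). So we keep $K$ with spectral density $S(\xi)=\exp\curlyb{-\tau\norm{\xi}^\gamma}$, and use the spatial decay of $K$ to control periodisation.

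\textbf{Reduction.} By \cref{lem:spectral-domination}, $G_k\succeq c_0G_K$, so it suffices to lower bound $\effd^K$ and $\gamma^K$ at scale $\rho/c_0$, which changes only constants. For bounded $a$, take all points equal, as noted in the text.

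\textbf{Lattice construction.} Place the design on the lattice $X=h\{0,\dots,N-1\}^d\subset\QQ$ with $N^d\asymp n$ and $h\asymp 1/N$. The Gram matrix is a block-Toeplitz matrix. Compare it with the circulant matrix built from the $Nh$-periodisation $K^{\mathrm{per}}(x)=\sum_{j\in\mathbb Z^d}K(x+Nhj)$. For the periodisation, Poisson summation shows that the discrete Fourier vectors $u_m=N^{-d/2}(e^{2\pi i m\cdot x/(Nh)})_{x\in X}$ are eigenvectors, with eigenvalues $N^d (Nh)^{-d}\sum_{l} S\bigl((m+Nl)/(Nh)\bigr)\geq n\,S(m)$ up to constants, taking $Nh\asymp1$. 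Choose $\cM=\{m:\norm{m}\leq M\}$ with $M^\gamma=\tau^{-1}\log(ca)$, so that $nS(m)\gtrsim 2\rho$ on $\cM$, and hence $\card\cM\asymp(\log(e+a))^{d/\gamma}$.

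\textbf{Main obstacle: periodisation error.} The hardest step is showing that $\norm{G_K(X)-G_{K^{\mathrm{per}}}(X)}_{\mathrm{op}}\leq\rho$. Since $S$ is smooth away from $0$ and decays exponentially in $\norm\xi^\gamma$, $K(x)$ decays only algebraically (like $\norm x^{-d-\gamma}$ for $\gamma<1$), so a naive wraparound bound gives an error of order $n\cdot O(1)$, which is far too large. The remedy is to use a smaller sub-box: keep the design inside $[0,1/2]^d$ but periodise with period $1$, not $Nh$. Equivalently, use a lattice of spacing $h$ on $[0,\tfrac12]^d$ and test only with modes $u_m$ restricted there.

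More cleanly, we avoid periodisation altogether and compute $u_m^*G_Ku_{m'}$ directly:
\[
u_m^*G_Ku_{m'}=N^{-d}\int S(\xi)\,D_N(\xi-m)\overline{D_N(\xi-m')}\,\dif\xi,
\]
where $D_N$ is the Dirichlet kernel. Lower bound the diagonal terms using the mass of $S$ near $m$: since $S$ varies slowly on unit scales when $\gamma\leq1$, we have $S(\xi)\gtrsim S(m)$ for $\norm{\xi-m}\leq1$, which gives $u_m^*G_Ku_m\gtrsim nS(m)$. Instead of bounding off-diagonal terms, it suffices to find a Hermitian minorant. Write $S\geq \tilde S$, where $\tilde S$ is a finite sum of positive bumps with $\tilde S\asymp S(m)$ near each integer $m$. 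Then $u^*G_Ku\geq\int\tilde S\,|\hat u|^2$. On the span of $\{u_m\}_{m\in\cM}$ this is bounded below by $\min_{\cM}S\cdot\int_{\text{neighbourhood of }\cM}|\hat u|^2$. Since the $\hat u$ are trigonometric polynomials with frequencies in $\cM$ (period $Nh$ in dual scaling), the integral over unit cells around $\cM$ captures a constant fraction of $\norm{u}^2 n$. This gives $U^*G_KU\succeq c\,n\min_{\cM}S\,I\succeq 2\rho I$.

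\textbf{Conclusion.} \cref{prop:resolved-directions} with $\theta\gtrsim1$ yields $\Effd_n\gtrsim(\log(e+a))^{d/\gamma}$. For the information gain, apply the same argument at every scale $t\in[\rho,n]$, or use $\theta=na^{-1}S(m)/\rho\cdot$const directly: then $\log(1+\theta_m)\asymp\log a-\tau\norm m^\gamma$. Summing over $\norm m^\gamma\leq\tfrac12\tau^{-1}\log a$ gives $\Gamma_n\gtrsim(\log a)^{d/\gamma}\cdot\log a$, as required. The per-mode $\theta_m$ version requires the minorant with a weight, which the bump construction above provides, since $G_K\succeq\sum_m c\,nS(m)\,u_mu_m^*$ up to the localisation constant.
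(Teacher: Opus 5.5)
\paragraph{The gap.} Your final route differs genuinely from the paper's and can be made to work, but the step it rests on is only asserted. You correctly see that periodising with period $\asymp 1$ fails.

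The ``sub-box'' remedy does not help, so drop it. With the design in $[0,\tfrac12]^d$ and period $1$, the wraparound entries $\sum_{\ell\neq0}K(x_i-x_j+\ell)$ are still of order one, and the restricted modes are no longer orthonormal.

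Your eventual idea is sound: drop periodisation and write $v^*G_kv=\int s\,|\hat v|^2\geq\int_\Omega s\,|\hat v|^2$, with $\hat v(\xi):=\sum_j v_je^{-2\pi i\xi\cdot x_j}$ and $\Omega$ a union of small cells around $\mathcal M$. But everything then rests on the bound $\int_\Omega|\hat v|^2\gtrsim n\|v\|^2$ holding uniformly on $\operatorname{span}\{u_m\}_{m\in\mathcal M}$, and you give no proof of it.

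This bound is not automatic. We have $\hat v=n^{-1/2}\sum_m c_mD_N(\cdot-m)$. The shifted Dirichlet kernels are orthogonal over a full period $[0,N)^d$ but not on $\Omega$. Their tails decay only like $|\xi_k-m_k|^{-1}$ in each coordinate. So cross terms on $\Omega$ survive, and you must rule out that combinations of $|\mathcal M|\to\infty$ modes push most of their mass off $\Omega$. Your justification also conflates $v$, whose frequencies lie in $\mathcal M$, with $\hat v$, which is an $N$-periodic trigonometric polynomial in $\xi$.

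\paragraph{Closing it.} Take $x_j=j/N$ for $j\in\{0,\dots,N-1\}^d$, set $n_0=N^d\leq n$, and choose $\mathcal M$ with elements distinct modulo $N$.

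\begin{enumerate}
  \item For $m'\in\mathcal M$ you have exactly $\hat v(m')=n_0^{1/2}c_{m'}$.
  \item Discrete Parseval applied to $\bigl(v_j(e^{-2\pi i t\cdot x_j}-1)\bigr)_j$ gives $\sum_{m'}|\hat v(m'+t)-\hat v(m')|^2\leq 4\pi^2d\|t\|^2n_0\|v\|^2$, with the sum over a full residue system.
  \item Hence $\sum_{m'\in\mathcal M}|\hat v(m'+t)|^2\geq n_0\|v\|^2/4$ for $\|t\|\leq t_0:=1/(4\pi\sqrt d)$.
  \item Integrating over the disjoint balls $m'+B_{t_0}$, and using $\|m+t\|^\gamma\leq\|m\|^\gamma+t_0^\gamma$, gives
\[
  v^*G_kv\geq c_d\,n_0\operatorname*{ess\,inf}_{\mathcal M+B_{t_0}}s\;\|v\|^2\gtrsim n_0\min_{m\in\mathcal M}e^{-\tau\|m\|^\gamma}\,\|v\|^2 .
\]
\end{enumerate}
With this, and after padding the lattice to $n$ points, your effective-dimension bound is complete.

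For information gain, your weighted minorant $G_K\succeq\sum_m c\,nS(m)u_mu_m^*$ does not follow from this unweighted localisation, and you give no argument for it. You do not need it; either of the following works.
\begin{enumerate}
  \item Restrict to $\{\tau\|m\|^\gamma\leq\tfrac12\log a\}$, where the display gives a uniform $\theta\gtrsim a^{1/2}$.
  \item Use your first option, which is valid because the lattice does not depend on the scale:
  $\log\det(I+\rho^{-1}G_k(X_n))=\int_\rho^\infty\effd^k(X_n;t)\,\mathrm{d}t/t\gtrsim\int_{a_0}^{a}(\log u)^{d/\gamma}\,\mathrm{d}u/u$.
\end{enumerate}
Your formula for $\theta$ should read $\theta_m\asymp aS(m)$.

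\paragraph{Comparison with the paper.} The paper keeps periodisation but uses a growing integer period $T\asymp b^\alpha$ with $1/(d+\gamma)<\alpha<1/d$.
\begin{enumerate}
  \item The decay $|K(x)|\lesssim\langle x\rangle^{-(d+\gamma)}$ (via Blumenthal--Getoor) makes the wraparound error $\lesssim n_0T^{-(d+\gamma)}=o(\rho)$ in operator norm.
  \item Poisson summation diagonalises the periodised Gram matrix on the Fourier directions, with coefficients $n_0T^{-d}S(m)$.
  \item The polynomial loss $T^{-d}$ still leaves a uniform margin $\theta\gtrsim b^\beta$, which gives both bounds at once.
\end{enumerate}
Your completed route needs none of the decay lemma, Poisson summation or the witness kernel. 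It works directly with $s$ and incurs no $T^{-d}$ loss. The same argument would also give the algebraic lower bound without a compactly supported witness.
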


\begin{proof}[Proof sketch]
We compare $k$ with the stationary witness kernel $K$ having spectral density $S(\xi)=\exp\curlyb{-\tau\norm{\xi}^{\gamma}}$. By \cref{lem:spectral-domination}, it suffices to treat $K$. On a uniform lattice with $n_0\asymp n$ points, periodise $K$ with an integer period $T\asymp a^\alpha$, where $1/(d+\gamma)<\alpha<1/d$. On this lattice, the Fourier mode $Tm$ agrees with the direction indexed by $m$ and has coefficient $T^{-d}S(m)$. The decay $\abs{K(x)}\lesssim\langle x\rangle^{-(d+\gamma)}$ bounds the difference between the original and periodised Gram matrices in operator norm by $\lesssim nT^{-(d+\gamma)}$, which is smaller than $\rho$ for sufficiently large $a$. Taking $\cM=\{0,\ldots,M-1\}^d$ with $M\asymp(\log a)^{1/\gamma}$ gives $n_0T^{-d}S(m)\gtrsim\rho a^\beta$ for every $m\in\cM$ and some $\beta>0$, while $\card(\cM)\asymp(\log a)^{d/\gamma}$. Applying \cref{prop:resolved-directions} gives a constant-order contribution per direction to effective dimension and a contribution of order $\log a$ per direction to information gain. See \cref{app:sub-exp-lb} for the full proof.
\end{proof}

The super-exponential regime uses a different construction based on leading Mercer directions.

\begin{theorem}[
  name={},
  restate=ThmSupExpLB
]
\label{thm:super-exp-model-kernels-lower}
  Let \(\gamma>1\), let \(\tau>0\), and let \(k\) be a stationary
  kernel on \(\Rd\) with spectral density \(s\). Suppose that there
  exists \(c_0>0\) such that
  \[
    s(\xi)
    \geq
    c_0\exp\curlyb{-\tau\norm{\xi}^{\gamma}}
    \qquad
    \text{for almost every }\xi\in\Rd.
  \]
  Then there exists \(C>0\) such that, for every
  \(n\in\Np\) and \(1 \leq \rho \leq n\),
  writing \(a:=n/\rho\) and \(L(a):=\log(e^e+a)\),
  \[
    \Effd_n^k(\QQ;\rho) \geq C L(a)^d(\log L(a))^{-d}
    \spaced{and} \Gamma_n^k(\QQ;\rho) \geq C\roundb{L(a)}^{d+1} \roundb{\log L(a)}^{-d}.
  \]
\end{theorem}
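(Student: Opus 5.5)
We first reduce to a single witness kernel. By \cref{lem:spectral-domination}, $G_k(X_n)\succeq c_0G_K(X_n)$, where $K$ is the stationary kernel with spectral density $S(\xi)=\exp\curlyb{-\tau\norm{\xi}^\gamma}$. Since $\tr F_\rho$ and $\log\det(I+\rho^{-1}\cdot)$ are monotone, and replacing $\rho$ by $\rho/c_0$ changes $a$ only by a constant factor, it suffices to treat $K$. For bounded $a$ we take all design points equal, which gives a constant lower bound. We may therefore assume that $a$ is large.

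We then build resolved directions from a product structure. Since $\gamma>1$, the density $S$ is not a product. We dominate it from below by one: $\norm{\xi}^\gamma\le d^{\gamma/2}\max_j|\xi_j|^\gamma\le d^{\gamma/2}\sum_j|\xi_j|^\gamma$. Hence $S\ge \prod_j S_1(\xi_j)$ with $S_1(u)=\exp\curlyb{-\tau' |u|^\gamma}$, and \cref{lem:spectral-domination} lets us replace $K$ by the tensor-product kernel $K_1^{\otimes d}$. Take a tensor grid $X=(t_1,\dots,t_N)^d\subset\QQ$ with $N^d\asymp n$. Then $G_{K}(X)=G_{K_1}(t)^{\otimes d}$, and the eigenvalues of the Gram matrix are products $\lambda_{i_1}\cdots\lambda_{i_d}$ of the one-dimensional eigenvalues. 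The problem thus reduces to the one-dimensional claim
\[
\lambda_j\big(G_{K_1}(t)\big)\ \ge\ N\exp\curlyb{-Cj\log(j+1)}\qquad\text{for } j\le cN,
\]
with $t$ equispaced. Given this claim, the index set $\cM=\{0,\dots,M-1\}^d$ yields products of size at least $n\exp\curlyb{-CdM\log M}$. Choose $M\asymp L(a)/\log L(a)$ with a small constant, so that $dCM\log M\le\frac12\log a$. Every product is then at least $\rho a^{1/2}$. \cref{prop:resolved-directions}, applied with $\theta=a^{1/2}$ and the eigenvectors as $U$, gives $\Effd\gtrsim M^d$ and $\Gamma\gtrsim M^d\log a\asymp L(a)^{d+1}(\log L(a))^{-d}$. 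The constraint $M\le cN$ holds because $N\asymp n^{1/d}\ge a^{1/d}\gg\log a$.

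The one-dimensional eigenvalue bound is the main obstacle. We would first bound the eigenvalues of the Gram matrix on $N$ equispaced points below by $N$ times the Mercer eigenvalues of $K_1$ on $[0,1]$ with respect to Lebesgue measure, up to a Riemann-sum error. A cleaner route uses Courant–Fischer with the test subspace spanned by sampled monomials (or Legendre polynomials) $v_\ell=(p_\ell(t_i))_i$, $\ell<j$. Writing $K_1(x-y)=\int S_1(u)e^{2\pi iu(x-y)}du$, the quadratic form becomes $\int S_1(u)\abs{\sum_i w_ie^{-2\pi iut_i}}^2du$. For $w$ in the span we lower bound this by restricting $u$ to $|u|\le R$ with $R\asymp j$, where $S_1\ge e^{-\tau'R^\gamma}$. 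That bound is too weak, giving $e^{-j^\gamma}$ rather than a factorial rate, so we would instead use Taylor moments. Expanding $e^{-2\pi iut}$, the form dominates $|\sum_i w_i t_i^{m}|^2$-type moment functionals weighted by $\int S_1(u)u^{2m}du/(m!)^2\gtrsim \Gamma$-ratios of size $\exp\curlyb{-Cm\log m}$, because $\int u^{2m}e^{-|u|^\gamma}du\asymp\Gamma((2m+1)/\gamma)$ and $(m!)^{-2}\Gamma(2m/\gamma)\ge e^{-Cm\log m}$. We must then control cross terms. The plan is to use the Hankel (moment) matrix $H_{pq}=\int S_1(u)(2\pi u)^{p+q}du/(p!q!)$ restricted to degree $<j$. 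Its smallest eigenvalue should be at least $e^{-Cj\log j}$ by the standard lower bounds for smallest eigenvalues of Hankel matrices of measures, compared with the Gaussian case. We expect this comparison to be the delicate step, and in particular the step of transferring it to the discrete grid with a factor $N$ via the discrete Vandermonde conditioning on $[0,1]$, which is itself of order $e^{-Cj}$ and hence harmless.
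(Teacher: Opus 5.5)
\textbf{There is a genuine gap: the one-dimensional factorial-rate eigenvalue bound, on which everything rests, is not proved.}

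Your reduction to the product witness $\prod_{\ell}e^{-\tau'\abs{\xi_\ell}^\gamma}$ is sound. So are the tensor grid with $G_K(X)=G_{K_1}(t)^{\otimes d}$, the choice $M\asymp L(a)/\log L(a)$ per coordinate, and the final appeal to \cref{prop:resolved-directions}. This deterministic Kronecker design would be a legitimate alternative to the paper's route. The paper instead uses Widom's one-dimensional asymptotics, a product-counting lemma for the $d$-dimensional Mercer eigenvalues, and a random design fixed by matrix Hoeffding.

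However, your argument depends entirely on the bound $\lambda_j(G_{K_1}(t))\ge N e^{-Cj\log(j+1)}$. That lower bound is the hard input of the theorem. The paper obtains it from \citet[Theorem~III]{widom1964asymptotic}, which gives $-\log\mu_m\asymp m\log m$ for the Mercer eigenvalues of $K_1$ on $[0,1]$. You neither invoke such a result nor replace it.

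The Hankel plan does not work as written. Put $\mu_m(w):=\sum_iw_it_i^m$ and $x_m:=\mu_m(w)/m!$. Then $\int S_1(u)\abs{\sum_iw_ie^{-2\pi iut_i}}^2\dif u$ is the full infinite quadratic form in $(x_m)_{m\ge0}$. A positive semidefinite form is not bounded below by a principal block unless the other coordinates vanish, and here $\mu_m(w)\neq0$ for all $m$.

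Treating the orders $m\ge j$ as an error also fails at truncation level $j$. The weights $\int S_1(u)(2\pi u)^{2m}\dif u/(m!)^2=e^{-2(1-1/\gamma)m\log m+O(m)}$ decrease only by a polynomial factor from one order to the next. Meanwhile, the block lower bound loses factors $e^{-O(j)}$ from the conditioning of the Hankel block and of the discrete Vandermonde matrix. The discarded moments are controlled only by $\abs{\mu_m}\le\sqrt N\norm{w}$. So the discarded part can dominate what you keep.

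One possible repair runs as follows.
\begin{itemize}
\item Truncate at $J=cj$ with $c>\gamma/(\gamma-1)$. The tail is then $Ne^{-2(1-1/\gamma)J\log J+O(J)}\norm{w}^2$, while the block gives $Ne^{-2j\log j-O(J)}\norm{w}^2$.
\item Bound the $J\times J$ block below by $e^{-\tau'}$ times the smallest eigenvalue of the monomial Gram matrix on $[-1,1]$, which is $e^{-O(J)}$.
\end{itemize}
None of this is in the proposal, and an appeal to ``standard Hankel bounds compared with the Gaussian case'' is not an argument.

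The quickest way to close the gap is the Riemann-sum route you mention and then drop.
\begin{itemize}
\item Since $t\mapsto k_t$ is Lipschitz in the native space, Weyl's inequality for $\frac1N\sum_ik_{t_i}\otimes k_{t_i}$ against $\int_0^1k_t\otimes k_t\dif t$ gives $\lambda_j(G_{K_1}(t))\ge N\mu_j-C$.
\item Combined with Widom's bound $\mu_j\ge e^{-Cj\log j}$, this suffices for all $j\le M$, which is all you need. The reason is that $N\gtrsim a^{1/d}$, while $\mu_M\ge a^{-1/(2d)}$ once the constant in $M$ is small.
\end{itemize}
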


\begin{proof}[Proof sketch]
By spectral domination and equivalence of norms on $\Rd$, it suffices to consider the product witness kernel $K_\gamma$ with spectral density $S_\gamma(\xi)=\exp\curlyb{-\tau_0\sum_{\ell=1}^d\abs{\xi_\ell}^{\gamma}}$. One-dimensional eigenvalue asymptotics of \citet{widom1964asymptotic}, together with a product counting argument, show that its Mercer eigenvalues satisfy $-\log\lambda_m\asymp m^{1/d}\log m$. Taking $M\asymp L(a)^d\roundb{\log L(a)}^{-d}$ with a sufficiently small constant gives $\lambda_M\geq a^{-1/4}$. Let $(\lambda_j,\phi_j)$ be the Mercer eigenpairs of $K_\gamma$ and let $V\in\R^{n\times M}$ have entries $V_{ij}:=\lambda_j^{1/2}\phi_j(x_i)$ for independent uniform points $x_1,\ldots,x_n\in\QQ$. Then $\E[n^{-1}V\tran V]=\diag(\lambda_1,\ldots,\lambda_M)$ and $\sum_{j=1}^M\lambda_j\phi_j(x)^2\leq K_\gamma(x,x)$. Since $n\lambda_M^2\gtrsim a^{1/2}$ dominates $\log M$, the matrix Hoeffding inequality gives a realisation for which $V\tran V\succeq n\lambda_M I_M/2$. Setting $Q:=V(V\tran V)^{-1/2}$, positivity of the remaining Mercer terms and spectral domination give $Q\tran G_kQ\succeq cn\lambda_MI_M$. Thus the hypothesis of \cref{prop:resolved-directions} holds with $\theta\gtrsim a\lambda_M\gtrsim a^{3/4}$, which gives the stated effective-dimension bound and an additional factor of order $L(a)$ for information gain. See \cref{app:sup-exp-lb} for the full proof.
\end{proof}

Thus each upper-bound regime is attained by a corresponding stationary kernel class, giving the two-sided rates in \cref{tab:stationary}.

\section{Related work}\label{sec:related-work}

Previous works proceeded by bounding the information, with bounds on the effective dimension recovered through the estimate
\[
  \effd^k(X_n;\rho)
  \leq
  \gamma^k(X_n;\rho)
  \leq
  \effd^k(X_n;\rho)
  \roundb{
    1+\log\roundb{1+\norm{G_k(X_n)}_{\op}/\rho}
  },
\]
given by \citet[Lemma~3]{calandriello2017second}. Our upper-bound analysis proceeds in the opposite direction, using an integral identity to derive information-gain bounds from effective-dimension bounds. The previous literature also largely focuses on the Mat\'ern and squared exponential kernels \citep{srinivas2009gaussian,iwazaki2025improved,iwazaki2026tighter}; \cref{tab:comparison} compares our rates for these kernels with the previous upper and lower bounds.

\paragraph{Bounds in expectation} \citet{seeger2008information} showed the in-expectation bounds on information gain using Mercer's decomposition of the kernel.

\begin{theorem}[\citet{mercer1909xvi}; version here adopted from \citet{sun2005mercer}]\label{thm:mercer}
 Let $\cX$ be a $\sigma$-compact metric space, let $P$ be a Borel probability measure on $\cX$ with full support. Let $k \in L^2(P \otimes P)$ be a continuous kernel on $\cX$ and define the integral operator
\[
  (T_k^P f)(x) := \int k(x,y) f(y) P(\dif y)\,, \qquad f \in L^2(P)\,.
\]
Then, there exist eigenvalues $\lambda_1, \lambda_2, \cdots \geq 0$ and an orthonormal family of continuous functions $(\phi_j)_{j \geq 1} \subset L^2(P)$ such that $T_k^P \phi_j = \lambda_j \phi_j$ for all $j \geq 1$. Moreover,
\[
    k(x,y) = \sum_{j=1}^\infty  \lambda_j \phi_j(x) \phi_j(y) \qquad \text{for every $x,y \in \cX$}\,,
\]
where the series converges absolutely and uniformly on every compact subset of $\cX \times \cX$.
\end{theorem}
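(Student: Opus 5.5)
The plan is the classical Hilbert--Schmidt/Dini route to Mercer's theorem, localised to compact subsets because \(\cX\) is only \(\sigma\)-compact. There are three phases. First, establish the operator-theoretic facts about \(T_k^P\). Second, show that every truncated remainder is again a positive semidefinite continuous kernel. Third, upgrade \(L^2\) convergence to uniform convergence on compacts via Dini's theorem.

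\emph{Step 1 (spectral decomposition).} Since \(k\in L^2(P\otimes P)\), \(T_k^P\) is Hilbert--Schmidt, hence compact. It is self-adjoint by symmetry of \(k\). It is positive: for \(f\in L^2(P)\) we need \(\langle T_k^Pf,f\rangle=\iint k(x,y)f(x)f(y)P(\dif x)P(\dif y)\geq0\). I would first prove this for bounded continuous \(f\) supported in a compact set, approximating the double integral by finite quadratic forms \(\sum_{i,j}k(x_i,x_j)f(x_i)f(x_j)w_iw_j\geq0\), using uniform continuity of \(k\) on compacts. I would then pass to general \(f\) by density in \(L^2(P)\) and continuity of the Hilbert--Schmidt form. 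The spectral theorem for compact self-adjoint positive operators then gives eigenvalues \(\lambda_j\geq0\) and an orthonormal family \((\phi_j)\) with \(T_k^P\phi_j=\lambda_j\phi_j\); zero eigenvalues can be dropped or padded. For \(\lambda_j>0\) I would choose the continuous representative \(\phi_j(x):=\lambda_j^{-1}\int k(x,y)\phi_j(y)P(\dif y)\).

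\emph{Step 2 (positivity of remainders).} Set \(k_N(x,y):=k(x,y)-\sum_{j\leq N}\lambda_j\phi_j(x)\phi_j(y)\). Its integral operator is \(T_k^P\) minus its spectral truncation, which is positive. The key lemma is that a continuous kernel with positive integral operator is pointwise positive semidefinite when \(P\) has full support. To prove it, suppose \(\sum_{i,j}c_ic_jk_N(x_i,x_j)<0\). Test the quadratic form against \(f=\sum_i c_i\,\mathbf 1_{B(x_i,\delta)}/P(B(x_i,\delta))\); these balls have positive mass by full support. By continuity, the form tends to the negative finite sum as \(\delta\to0\), which contradicts positivity. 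In particular \(k_N(x,x)\geq0\), so \(\sum_{j}\lambda_j\phi_j(x)^2\leq k(x,x)\). By Cauchy--Schwarz, the series \(\sum_j\lambda_j\phi_j(x)\phi_j(y)\) then converges absolutely, and uniformly in \(y\) for each fixed \(x\), on every compact set.

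\emph{Step 3 (identification and uniformity).} For fixed \(x\), the partial sums in \(y\) converge in \(L^2(P)\) to the projection of \(k(x,\cdot)\) onto \(\overline{\operatorname{span}}\{\phi_j:\lambda_j>0\}\). The orthogonal complement of this span is \(\ker T_k^P\), and \(k(x,\cdot)\) is orthogonal to it by the defining integral. Hence the series equals \(k(x,\cdot)\) \(P\)-a.e. in \(y\). Continuity of both sides in \(y\), together with full support, then gives equality everywhere, in particular on the diagonal. On a compact \(K\subset\cX\), the partial sums \(\sum_{j\leq N}\lambda_j\phi_j(x)^2\) are continuous and increase to the continuous function \(k(x,x)\), so Dini's theorem gives uniform convergence on \(K\). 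Cauchy--Schwarz on the tails, \(\abs{\sum_{j>N}\lambda_j\phi_j(x)\phi_j(y)}^2\leq k_N(x,x)k_N(y,y)\), then yields absolute and uniform convergence on \(K\times K\).

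The main obstacle is the continuity of \(\phi_j\) in Step 1 in the non-compact setting. The bound \(k\in L^2(P\otimes P)\) does not by itself make \(x\mapsto k(x,\cdot)\) continuous into \(L^2(P)\), and the Step 3 identification also needs \(k(x,\cdot)\in L^2(P)\) for every \(x\), which Fubini gives only for \(P\)-a.e.\ \(x\). My first attempt would be to control \(\abs{k(x,y)}\leq k(x,x)^{1/2}k(y,y)^{1/2}\). Combined with local boundedness of \(k(x,x)\) on compacts and an exhaustion \(\cX=\bigcup_mK_m\), this would give dominated convergence, provided \(\int k(y,y)\,P(\dif y)<\infty\). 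That integrability is not a stated hypothesis, so it is exactly where care is needed. If it cannot be derived from the hypotheses, I would follow Sun's argument: prove the result for the restrictions of \(P\) to the \(K_m\) and pass to the limit using the \(L^2(P\otimes P)\) hypothesis.
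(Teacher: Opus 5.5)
\paragraph{Assessment.} The paper gives no proof of this statement; it is quoted from Sun. The benchmark is therefore the classical Hilbert--Schmidt/Dini argument, and your three steps follow it. The genuine gap is the point you flag yourself. You leave it unresolved, and every later step depends on it.

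\paragraph{Where the gap bites.} You need, for \emph{every} $x$ and not just $P$-a.e.\ $x$, that $k(x,\cdot)\in L^2(P)$ and that $x\mapsto\int k(x,y)g(y)\,P(\dif y)$ is continuous for $g\in L^2(P)$. This is what makes your representatives $\phi_j$ continuous, and hence $k_N$ continuous in Step~2. It is also what justifies $k(x,\cdot)\perp\ker T_k^P$ in Step~3: the defining integral only gives $(T_k^Pg)(x)=0$ for $P$-a.e.\ $x$. Your first attempt needs $\int k(y,y)\,P(\dif y)<\infty$, which does not follow from the hypotheses. For instance, on $\Np$ with $P(\{i\})=p_i>0$, the kernel $k(i,j)=\delta_{ij}/(ip_i)$ lies in $L^2(P\otimes P)$ but has eigenvalues $1/i$, so $T_k^P$ is Hilbert--Schmidt but not trace class. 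The fallback via restrictions to $K_m$ is only a pointer. The eigenpairs of the restricted operators are not those of $T_k^P$, and you do not say how the limit would be taken.

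\paragraph{The missing idea.} Full support and positivity of $T:=T_k^P$ already bound every row:
\[
  \int k(x,y)^2\,P(\dif y)\leq\norm{T}_{\op}\,k(x,x),\qquad x\in\cX.
\]
To see this, put $f_\delta:=\mathbf{1}_{B(x,\delta)}/P(B(x,\delta))$, which is well defined by full support. Continuity of $k$ gives $(Tf_\delta)(y)\to k(x,y)$ for every $y$ and $\langle Tf_\delta,f_\delta\rangle\to k(x,x)$ as $\delta\to0$. Positivity gives $\norm{Tf_\delta}_{L^2(P)}^2\leq\norm{T}_{\op}\langle Tf_\delta,f_\delta\rangle$, and Fatou's lemma concludes.

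Since $x\mapsto k(x,x)$ is continuous, the rows are locally bounded in $L^2(P)$. Combined with the pointwise convergence $k(x',\cdot)\to k(x,\cdot)$ as $x'\to x$, this gives weak convergence $k(x',\cdot)\rightharpoonup k(x,\cdot)$ in $L^2(P)$. Hence $x\mapsto\int k(x,y)g(y)\,P(\dif y)$ is continuous for every $g\in L^2(P)$, which gives continuous $\phi_j$. It also gives $k(x,\cdot)\perp\ker T$ for every $x$, because a continuous function vanishing $P$-a.e.\ vanishes everywhere under full support. With this inserted, your Steps~2 and~3 go through as written.

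\paragraph{A smaller repair in Step~1.} On a $\sigma$-compact metric space that is not locally compact, such as $\mathbb{Q}$, every continuous function with compact support can vanish, so your density class may be trivial. Prove positivity instead for bounded measurable $f$ vanishing outside some $K_m$. Your quadratic-form approximation works for these functions, and taking $K_m\uparrow\cX$ gives density.
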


The argument of \citet{seeger2008information} follows by establishing that for any sequence $X_n = (x_1, \dots, x_n)$ in the support of $P$, the following inequality holds:
\begin{equation}\label{eq:seeger}
  \log\det(I+\rho^{-1} G_k(X_n)) \leq \sum_{j=1}^\infty \log(1+ \rho^{-1}\lambda_j \sum_{t=1}^n \phi_j^2(x_t))\,.\tag{$\dagger$}
\end{equation}
From here, Jensen's inequality shows that if $X_n = (x_1, \dots, x_n) \sim P^{\otimes n}$, then
\[
  \E \log\det(I+\rho^{-1} G_k(X_n)) \leq \sum_{j=1}^\infty \log(1+ \rho^{-1}\lambda_j \sum_{t=1}^n \E \phi_j^2(x_t)) = \sum_{j=1}^\infty \log(1+ \rho^{-1}n\lambda_j )\,,
\]
with the equality using the normalisation $\E \phi_j^2(x_t) = \norm{\phi_j}_{L^2(P)}^2 = 1$ of eigenfunctions. From here, eigenvalue decay rates for squared exponential and Mat\'ern kernels under various covariate measures can be used to establish upper bounds \citep{widom1963asymptotic,widom1964asymptotic}.

\paragraph{Worst-case bounds with uniformly bounded eigenfunctions} We say that $k$ has uniformly bounded eigenfunctions (with respect to $P$) if it admits a continuous choice of eigenfunctions $(\phi_j)_{j\geq 1}$ such that
\[
  b := \sup_{j \geq 1} \norm{\phi_j}_{L^\infty(P)} < \infty\,.
\]
Under this assumption, \cref{eq:seeger} gives that for every $X_n=(x_1,\dots,x_n)$ in the support of $P$,
\[
  \log\det\roundb{I+\rho^{-1}G_k(X_n)} \leq \sum_{j=1}^\infty\log\roundb{1+\rho^{-1}nb^2\lambda_j}.
\]
\citet{vakili2021information} argue that this inequality implies tight-up-to-log-factors worst-case information gain bounds for the Mat\'ern and squared exponential kernels; however, they do not actually establish that either the Mat\'ern or squared exponential kernels satisfy the assumption of uniformly bounded eigenfunctions.\footnote{\citet{vakili2021information} cite \citet{riutort2023practical} in support of the claim that the Mat\'ern and squared exponential kernels on the hypercube admit uniformly bounded eigenfunctions. However, \citet{riutort2023practical} work with eigenfunctions of the Laplace operator on an extended domain under Dirichlet boundary conditions, and not the Mercer eigenfunctions on the original domain.}

The property that the eigenfunctions are uniformly bounded is not trivially granted. \citet{zhou2002covering} provide a counter-example of a smooth compactly supported kernel with eigenfunctions (with respect to the Lebesgue measure) growing without bound. \citet{minh2006mercer} provide a whole family of continuous counter-examples on the unit sphere in $\Rd$ for $d \geq 3$.

\begin{table}[t]
  \centering
  \caption{Comparison of bounds on $\Gamma_n^k([0,1]^d; 1)$ for the Mat\'ern-$\nu$ and squared exponential kernels as $n \to \infty$. Previous lower bound column follows from combining the results of \citet{scarlett2017lower,li2022gaussian}. Previous upper bounds are due to \citet{iwazaki2025improved,iwazaki2026tighter}, and require $\nu > 1/2$ for the Mat\'ern kernel. Rates and bounds are understood up to constants independent of $n$.}\label{tab:comparison}
  \begin{tabular}{@{}lccc@{}}
  \toprule
  Kernel
  & Sharp rate
  & Previous lower bound
  & Previous upper bound \\
  \midrule
  Mat\'ern-$\nu$
  & $n^{\frac{d}{2\nu + d}}$
  & $n^{\frac{d}{2\nu + d}}\frac{1}{\log n(\log\log n)^2 }$
  & $n^{\frac{d}{2\nu + d}}(\log n)^{\frac{4\nu + d}{2\nu + d}}$  \\
  \addlinespace[2pt]
  Squared exponential
  & $\frac{(\log n)^{d+1}}{(\log\log n)^d}$
  & $\frac{(\log n)^{\frac{d}{2}-1}}{(\log\log n)^{2}}$
  & $\frac{(\log n)^{d+1}}{(\log\log n)^d}$\\
  \bottomrule
  \end{tabular}
\end{table}

\paragraph{Bounds via the hypersphere}
\citet{iwazaki2025improved,iwazaki2026tighter} establish upper bounds on the information gain on the unit sphere in $\Rd$ for the Mat\'ern and squared exponential kernels, respectively. Their proofs expand the kernel in spherical harmonics and use the addition theorem to control the tail. They then relate the information gain on unit sphere to that on compact subsets of \(\Rd\). The resulting upper bound is tight for the squared exponential kernel, but yields an excess logarithmic factor for the Mat\'ern kernel. The proof technique is limited to zonal kernels, that is, kernels of the form $k(x,y) = g(\langle x, y \rangle)$ for a continuous function~$g$.

\paragraph{Lower bounds} \citet{scarlett2017lower} gives algorithm-independent lower bounds on the expected regret in a Gaussian process bandit setting for the Mat\'ern and squared exponential kernels. Comparing these with the regret upper bound given by \citet{li2022gaussian} yields the lower bounds on worst-case information gain for these two kernels reported in \cref{tab:comparison}. The resulting lower bound for the squared exponential kernel does not recover the correct leading exponent. This is likely due to the proof of \citet{scarlett2017lower} being based on Fourier transforms. Our approach is different as we switch to a Mercer-based construction to obtain the tight rate for kernels with super-exponential spectral decay.

\section*{Acknowledgements and Disclosure of Funding}

The research of Arya Akhavan was funded by UK Research and Innovation (UKRI) under the UK government’s Horizon Europe funding guarantee [grant number EP/Y028333/1]. The work of Alexandre B. Tsybakov was supported by Labex Ecodec (ANR-11-LABEX-0047) and by ANR MaLIP (ANR-25-CE40-3228-01).

\printbibliography

\clearpage

\appendix
\section{Proofs of upper bounds}

This appendix collects the deferred upper-bound arguments. We first record function space definitions, and the native-space facts used throughout and prove the scalar-kernel smoothness criterion. We then derive ultradifferentiable regularity from stretched-exponential moments of the spectral measure and prove the entire-extension results used for factorial residuals.

\subsection{Function-space conventions}
\label{app:functions}

We record the Bessel-potential and H\"older-space conventions used in the main text.

\paragraph{Bessel-potential spaces} Let $q\in\Np$ and $t>0$. The Bessel potential space $H^t(\R^q)$ consists of the functions $F\in L^2(\R^q)$ such that
\[
  \norm{F}_{H^t(\R^q)}^2:=\int_{\R^q}\langle\xi\rangle^{2t}\abs{\widehat F(\xi)}^2\dif\xi<\infty,
\]
where $\widehat F(\xi):=\int_{\R^q}F(x)e^{-2\pi i\xi\cdot x}\dif x$ interpreted for $F \in L^2$ by the usual extension. If $D\subset\R^q$ is open, or if $q=d$ and $D=\QQ$, define $H^t(D):=\curlyb{F|_D:F\in H^t(\R^q)}$ with norm
\[
  \norm{f}_{H^t(D)}:=\inf\curlyb{\norm{F}_{H^t(\R^q)}:F\in H^t(\R^q),\ F|_D=f}.
\]
Restrictions are understood almost everywhere.

\paragraph{H\"older spaces} Let $U\subset\R^q$ be open, let $r\in\N$, and let $\theta\in(0,1]$. As usual, $C^r(U)$ denotes the space of functions with continuous partial derivatives up to order $r$. For a compact set $K\subset U$, define
\[
  [g]_{C^{0,\theta}(K)}:=\sup_{\substack{x,y\in K\\x\neq y}}\frac{\abs{g(x)-g(y)}}{\norm{x-y}^{\theta}}.
\]
The space $C^{r,\theta}(U)$ consists of the functions $f\in C^r(U)$ such that $[\partial^\alpha f]_{C^{0,\theta}(K)}<\infty$ for every compact $K\subset U$ and every multi-index $\alpha$ with $|\alpha|=r$.

The space $C^r(\QQ)$ consists of the functions whose partial derivatives up to order $r$ on $(0,1)^d$ extend continuously to $\QQ$. For $\theta\in(0,1]$, the space $C^{r,\theta}(\QQ)$ consists of those $f\in C^r(\QQ)$ for which
\[
  \norm{f}_{C^{r,\theta}(\QQ)}:=\max_{|\alpha|\leq r}\norm{\partial^\alpha f}_{L^\infty(\QQ)}+\max_{|\alpha|=r}[\partial^\alpha f]_{C^{0,\theta}(\QQ)}
\]
is finite.

For $s>0$, write $s=r+\theta$ with $r:=\floor{s}$ and $\theta\in[0,1)$. We set $C^s:=C^r$ when $\theta=0$ and $C^s:=C^{r,\theta}$ when $\theta>0$, both on open sets and on $\QQ$. We retain the notation $C^{r,1}$ for the Lipschitz endpoint.

\subsection{Native space preliminaries}

The following three standard facts are used in the main text and the deferred proofs.
\begin{lemma}[Restriction of a native space]
\label{lem:rkhs-restriction}
  Let \(U\subset\Rd\), let \(\QQ\subset U\), and let \(\tilde k\) be a positive semidefinite kernel on \(U\). Set \(k:=\tilde k|_{\QQ\times\QQ}\). Then \[\cH_k = \curlyb{F|_{\QQ}: F\in\cH_{\tilde k}},\]
  and, for every \(f\in\cH_k\),
  \[
    \norm{f}_{\cH_k} = \min\curlyb{
      \norm{F}_{\cH_{\tilde k}} \colon F\in\cH_{\tilde k},\ F|_{\QQ}=f}.
  \]
\end{lemma}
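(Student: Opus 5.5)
The plan is to prove this by the standard Aronszajn restriction argument, realising $\cH_k$ as a quotient of $\cH_{\tilde k}$. Set $\cN:=\curlyb{F\in\cH_{\tilde k}:F|_{\QQ}=0}$. Since point evaluations are continuous on $\cH_{\tilde k}$, $\cN$ is a closed subspace: it is the intersection over $x\in\QQ$ of the kernels of the bounded functionals $F\mapsto\langle F,\tilde k_x\rangle_{\cH_{\tilde k}}$. Equivalently, $\cN$ is the orthogonal complement of $\cV:=\overline{\spn}\curlyb{\tilde k_x:x\in\QQ}$, so $\cH_{\tilde k}=\cV\oplus\cN$.

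Next I would define $\cH:=\curlyb{F|_{\QQ}:F\in\cH_{\tilde k}}$ with $\norm{f}_{\cH}:=\inf\curlyb{\norm{F}_{\cH_{\tilde k}}:F|_{\QQ}=f}$. Any $F$ with $F|_{\QQ}=f$ decomposes as $F=P_\cV F+P_\cN F$, and $P_\cV F$ has the same restriction as $F$ with no larger norm. Two elements of $\cV$ with equal restrictions differ by an element of $\cV\cap\cN=\{0\}$. Hence restriction is a linear bijection $\cV\to\cH$. The infimum is attained at the unique $P_\cV F$, which justifies writing a minimum, and the bijection is isometric for this norm. Because it is a surjective isometry from the Hilbert space $\cV$, it follows that $\cH$ is a Hilbert space of functions on $\QQ$, with inner product transported from $\cV$.

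It remains to check that $k$ reproduces $\cH$, after which uniqueness of the RKHS (Moore--Aronszajn) gives $\cH=\cH_k$ with equal norms. For $x\in\QQ$, $\tilde k_x\in\cV$ and its restriction is $k_x$, so $k_x\in\cH$. For $f\in\cH$, let $F\in\cV$ be its preimage. Then
\[
  \langle f,k_x\rangle_{\cH}=\langle F,\tilde k_x\rangle_{\cH_{\tilde k}}=F(x)=f(x).
\]
Thus $\cH$ is a Hilbert space on $\QQ$ with reproducing kernel $k$, and the conclusion follows.

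There is no real obstacle in this argument. The only point needing care is that $\cN$ is closed and equals $\cV^\perp$, so that the projection argument both attains the infimum and makes the restriction map injective on $\cV$. Nothing in the argument uses the geometry of $\QQ$, so it applies to any subset of $U$.
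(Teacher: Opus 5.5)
Your proof is correct and essentially the same as the paper's. The paper proves this lemma only by citing Wendland's restriction and extension theorems (Theorems 10.46--10.47), whose standard proof uses the same decomposition $\mathcal{H}_{\tilde k}=\mathcal{V}\oplus\mathcal{V}^{\perp}$, with $\mathcal{V}$ the closed span of $\{\tilde k(x,\cdot):x\in\QQ\}$, and identifies the minimal-norm extension with the projection onto $\mathcal{V}$; you have written that argument out in self-contained form.
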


\begin{proof}
  This is the restriction and extension theorem for native spaces;
  see \citet[Theorems~10.46--10.47]{wendland2004scattered}.
\end{proof}

\begin{lemma}[Derivative reproducing property]
\label{lem:derivative-reproducing}
  Let \(r\in\N\), let \(U\subset\Rd\) be open, and let \(k\) be a
  positive semidefinite kernel on \(U\) such that
  \(k\in C^{2r}(U\times U)\). Then every \(f\in\cH_k\) belongs to
  \(C^r(U)\), and, for every multi-index \(\alpha\) with
  \(|\alpha|\leq r\) and \(x\in U\),
  \[
    \partial_x^\alpha k(x,\cdot)\in\cH_k
    \qquad\text{and}\qquad
    \partial^\alpha f(x)
    =
    \left\langle
      f,\partial_x^\alpha k(x,\cdot)
    \right\rangle_{\cH_k}.
  \]
\end{lemma}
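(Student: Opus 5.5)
We prove the derivative reproducing property by induction on \(|\alpha|\), one coordinate derivative at a time. The standard route runs through difference quotients of the feature map \(x\mapsto k_x\in\cH_k\). It suffices to handle a single partial derivative \(\partial_{x_\ell}\) applied to a kernel \(k\in C^{2}(U\times U)\), and then iterate. At the inductive step we replace \(k_x\) by \(\partial_x^\beta k(x,\cdot)\) with \(|\beta|<r\). The remaining smoothness of the mixed derivatives \(\partial_x^\beta\partial_y^\beta k\) is exactly what the next step requires.

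\textbf{Base step.} Fix \(x\in U\), a coordinate \(\ell\), and small \(h\neq0\), and set
\[
  D_h:=h^{-1}\roundb{k_{x+he_\ell}-k_x}\in\cH_k .
\]
By the reproducing property,
\[
  \norm{D_h-D_{h'}}_{\cH_k}^2
\]
expands into a combination of values of \(k\) at the four points \(x\), \(x+he_\ell\), \(x+h'e_\ell\), and their pairings. This combination is a second-order mixed difference quotient of \(k\) in the variables \((x_\ell,y_\ell)\). Because \(\partial_{x_\ell}\partial_{y_\ell}k\) is continuous near \((x,x)\), the mean value theorem shows that this quantity tends to \(0\) as \(h,h'\to0\). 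Hence \(D_h\) is Cauchy in \(\cH_k\) and converges to some \(g_x\in\cH_k\).

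Norm convergence implies pointwise convergence in an RKHS. Evaluating at \(y\) then identifies \(g_x(y)=\partial_{x_\ell}k(x,y)\), so \(\partial_{x_\ell}k(x,\cdot)\in\cH_k\). For any \(f\in\cH_k\),
\[
  \langle f,D_h\rangle_{\cH_k}=h^{-1}\roundb{f(x+he_\ell)-f(x)}\to\langle f,g_x\rangle_{\cH_k}.
\]
This shows that \(\partial_\ell f(x)\) exists and equals \(\langle f,\partial_{x_\ell}k(x,\cdot)\rangle_{\cH_k}\).

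\textbf{Continuity of the derivative.} We next show that \(x\mapsto g_x\) is continuous into \(\cH_k\). Its squared increment is
\[
  \norm{g_x-g_{x'}}_{\cH_k}^2 = \partial_{x_\ell}\partial_{y_\ell}k(x,x)-2\,\partial_{x_\ell}\partial_{y_\ell}k(x,x')+\partial_{x_\ell}\partial_{y_\ell}k(x',x'),
\]
where the inner products of the limits are computed from the reproducing identity just established. This expression tends to \(0\) as \(x'\to x\) by continuity of the mixed derivative. Cauchy--Schwarz then gives continuity of \(\partial_\ell f\).

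\textbf{Iteration and main obstacle.} To iterate, we note that the function \(\partial_x^\beta k(x,\cdot)\) plays the role of \(k_x\). Its inner products are \(\partial_x^\beta\partial_y^\beta k(x,y)\), and these are still \(C^{2}\) in each relevant pair of coordinates provided \(2|\beta|+2\leq 2r\). The main obstacle is bookkeeping rather than any conceptual difficulty. We must check that only derivatives of order at most \(r\) in each of \(x\) and \(y\) are needed, total order \(\leq 2r\), so that \(C^{2r}\) suffices. We must also verify the symmetry identity \(\langle\partial_x^\alpha k(x,\cdot),\partial_y^\beta k(y,\cdot)\rangle=\partial_x^\alpha\partial_y^\beta k(x,y)\) at each stage. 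Both checks follow from the same limit argument, so the induction closes. Alternatively, one may simply cite \citet[Lemma~4.34 and Corollary~4.36]{steinwart2008support}, whose proof follows this scheme.
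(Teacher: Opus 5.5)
Your proposal is correct and is essentially the standard difference-quotient argument (Cauchy difference quotients of \(x\mapsto\partial_x^\beta k(x,\cdot)\) in \(\cH_k\), then continuity from the increment identity) that underlies the results the paper simply cites, \citet[Theorems~10.45 and~16.7]{wendland2004scattered}; your alternative citation of \citet[Lemma~4.34 and Corollary~4.36]{steinwart2008support} would serve equally well. The only case you leave implicit is \(r=0\): there continuity of \(f\) follows from the same computation, \(\norm{k_x-k_{x'}}_{\cH_k}^2=k(x,x)-2k(x,x')+k(x',x')\to0\).
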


\begin{proof}
  This is an immediate consequence of \citet[Theorems~10.45 and~16.7]{wendland2004scattered}.
\end{proof}

\begin{lemma}
\label{lem:feature-space-representation}
  Let \(\cX\) be a set, let \(\cF\) be a Hilbert space, and let
  \(\Phi\colon\cX\to\cF\). Suppose that $k(x,y)=\langle\Phi(y),\Phi(x)\rangle_{\cF}$ for all $x,y\in\cX$. Then, for every \(f\in\cH_k\), there exists a unique vector \(v_f\)
  in the closure in \(\cF\) of the span of
  \(\curlyb{\Phi(x):x\in\cX}\) such that
  \[
    f(x)=\langle v_f,\Phi(x)\rangle_{\cF},
    \qquad
    x\in\cX,
    \qquad
    \norm{v_f}_{\cF}=\norm{f}_{\cH_k}.
  \]
\end{lemma}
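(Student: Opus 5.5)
The plan is to build \(v_f\) through an isometry between \(\cH_k\) and the closed subspace \(\cF_0:=\overline{\spn}\curlyb{\Phi(x):x\in\cX}\subset\cF\). Define the linear map \(W\colon\cF_0\to\R^{\cX}\) by \((Wv)(x):=\langle v,\Phi(x)\rangle_{\cF}\). For real-valued kernels with a complex \(\cF\), the same argument applies once we fix the convention \(k(x,y)=\langle\Phi(y),\Phi(x)\rangle_{\cF}\); we therefore work with this inner-product order throughout.

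\textbf{Step 1: \(W\) is injective on \(\cF_0\).} If \(Wv=0\), then \(v\) is orthogonal to every \(\Phi(x)\), hence to all of \(\cF_0\). Since \(v\in\cF_0\), this forces \(v=0\).

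\textbf{Step 2: the range of \(W\) is an RKHS with kernel \(k\).} Equip \(\cH':=W(\cF_0)\) with the transported norm \(\norm{Wv}_{\cH'}:=\norm{v}_{\cF}\). This makes \(\cH'\) a Hilbert space, since \(W\colon\cF_0\to\cH'\) is a bijective linear isometry from a complete space. Next, \(W\Phi(y)=\langle\Phi(y),\Phi(\cdot)\rangle_{\cF}=k(\cdot,y)=k_y\), so every \(k_y\) lies in \(\cH'\). Finally, for \(g=Wv\in\cH'\) we have
\[
\langle g,k_y\rangle_{\cH'}=\langle v,\Phi(y)\rangle_{\cF}=g(y).
\]
Thus \(\cH'\) is a Hilbert space of functions containing every \(k_y\) and satisfying the reproducing property. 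By the uniqueness part of the Moore--Aronszajn theorem, \(\cH'=\cH_k\) with equal norms.

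\textbf{Step 3: existence and uniqueness of \(v_f\).} Existence follows from Step 2. Given \(f\in\cH_k\), set \(v_f:=W^{-1}f\in\cF_0\). Then \(f(x)=\langle v_f,\Phi(x)\rangle_{\cF}\) for all \(x\in\cX\), and \(\norm{v_f}_{\cF}=\norm{f}_{\cH_k}\). Uniqueness follows from Step 1: any other \(v\in\cF_0\) with the same evaluations satisfies \(W(v-v_f)=0\), so \(v=v_f\).

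The only step needing care is identifying the norms, which is the uniqueness of the RKHS in Step 2. It can also be checked directly, without citing Moore--Aronszajn. On finite combinations the isometry is a computation: for \(\sum_i c_ik_{x_i}\),
\[
\norm[\Big]{\sum_i c_ik_{x_i}}_{\cH_k}^2=\sum_{i,j}c_ic_jk(x_i,x_j)=\norm[\Big]{\sum_i c_i\Phi(x_i)}_{\cF}^2.
\]
Both \(\cH_k\) and \(\cH'\) are completions of these spans, and convergence in either norm implies pointwise convergence. Hence the isometry on spans extends to all of \(\cH_k\) and \(\cF_0\), and the two spaces coincide with equal norms.
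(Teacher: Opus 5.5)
Your proof is correct and takes the paper's route: the paper also restricts $v\mapsto\langle v,\Phi(\cdot)\rangle_{\cF}$ to $\cF_0=(\ker T)^\perp$ and uses that this restriction is an isometric isomorphism onto $\cH_k$, but it cites Steinwart and Christmann's Theorem~4.21 for that fact, whereas you prove it directly from injectivity on $\cF_0$ and uniqueness of the RKHS (or, alternatively, from the isometry on finite spans). One small imprecision, which the paper shares: when $\cF$ is complex (as in the Mat\'ern remark) your $W$ takes complex values rather than values in $\R^{\cX}$, so strictly one identifies the real space $\cH_k$ with the real-valued functions in the complex RKHS of $k$, which carry the same norm.
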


\begin{proof}
  Let \(\cF_0\) be the closure in \(\cF\) of the span of
  \(\curlyb{\Phi(x):x\in\cX}\), and define
  \(Tv:=\roundb{x\mapsto\langle v,\Phi(x)\rangle_{\cF}}\).
  Then \(\ker T=\cF_0^\perp\), since \(Tv=0\) if and only if \(v\)
  is orthogonal to \(\Phi(x)\) for every \(x\in\cX\). By
  \citet[Theorem~4.21 and its proof]{steinwart2008support}, the
  restriction of \(T\) to \((\ker T)^\perp=\cF_0\) is an isometric
  isomorphism onto \(\cH_k\). The claim follows.
\end{proof}

\subsection{H\"older regularity from kernel smoothness}\label{app:holder-continuity}

We restate and prove the kernel-smoothness sufficient condition for algebraic residuals.

\CorSmoothKernelResiduals*

\begin{proof}
  Fix \(f\in\cH_k\). By \cref{lem:rkhs-restriction}, there exists \(F\in\cH_{\tilde k}\) such that \(F|_{\QQ}=f\) and \(\norm{F}_{\cH_{\tilde k}}=\norm{f}_{\cH_k}\). Write \(s=2r+\tau\), where \(r:=\lfloor s/2\rfloor\) and \(\tau\in[0,2)\). For \(|\alpha|\leq r\), set \(\psi_\alpha(x):=\partial_x^\alpha\tilde k(x,\cdot)\) and \(g_\alpha(x,y):=\partial_x^\alpha\partial_y^\alpha\tilde k(x,y)\). Since \(\tilde k\in C^{2r}(U\times U)\), \cref{lem:derivative-reproducing} gives
  \[
    \partial^\alpha F(x)=\langle F,\psi_\alpha(x)\rangle_{\cH_{\tilde k}},\qquad \langle\psi_\alpha(x),\psi_\alpha(y)\rangle_{\cH_{\tilde k}}=g_\alpha(x,y).
  \]
  The second identity follows by applying the derivative reproducing formula to \(\psi_\alpha(x)\). In particular, \(\norm{\psi_\alpha(x)}_{\cH_{\tilde k}}^2=g_\alpha(x,x)\). Since \(x\mapsto g_\alpha(x,x)\) is continuous on the compact set \(\QQ\), Cauchy--Schwarz gives
  \[
    \norm{\partial^\alpha F}_{L^\infty(\QQ)}\leq C\norm{F}_{\cH_{\tilde k}},\qquad |\alpha|\leq r.
  \]
  If \(\tau=0\), then \(s/2=r\), and the required embedding follows.

  Suppose that \(0<\tau<2\), and fix \(|\alpha|=r\). Then \(g_\alpha\in C^\tau(U\times U)\), and symmetry of \(\tilde k\) gives \(g_\alpha(x,y)=g_\alpha(y,x)\). Moreover,
  \[
    \norm{\psi_\alpha(x)-\psi_\alpha(y)}_{\cH_{\tilde k}}^2=g_\alpha(x,x)+g_\alpha(y,y)-2g_\alpha(x,y).
  \]
  If \(0<\tau\leq1\), symmetry yields
  \[
    g_\alpha(x,x)+g_\alpha(y,y)-2g_\alpha(x,y)=\roundb{g_\alpha(x,x)-g_\alpha(x,y)}+\roundb{g_\alpha(y,y)-g_\alpha(y,x)}.
  \]
  H\"older continuity when \(\tau<1\), and the mean value theorem when \(\tau=1\), therefore give
  \[
    \norm{\psi_\alpha(x)-\psi_\alpha(y)}_{\cH_{\tilde k}}^2\leq C\norm{x-y}^{\tau},\qquad x,y\in\QQ.
  \]

  Now suppose that \(1<\tau<2\), and write \(\beta:=\tau-1\). For \(x,y\in\QQ\), put \(h:=y-x\), and write \(\nabla_u g_\alpha(u,v)\) for the gradient with respect to the first argument. Since \(\QQ\) is convex, the fundamental theorem of calculus and symmetry give
  \[
    g_\alpha(x,x)+g_\alpha(y,y)-2g_\alpha(x,y)=\int_0^1\left\langle\nabla_u g_\alpha(x+th,y)-\nabla_u g_\alpha(x+th,x),h\right\rangle\dif t.
  \]
  The map \(\nabla_u g_\alpha\) is \(\beta\)-H\"older on \(\QQ\times\QQ\), so the integrand is bounded in absolute value by \(C\norm{x-y}^{1+\beta}=C\norm{x-y}^{\tau}\). Hence the same bound on \(\norm{\psi_\alpha(x)-\psi_\alpha(y)}_{\cH_{\tilde k}}^2\) holds in this case.

  We have therefore shown that, for every \(0<\tau<2\),
  \[
    \norm{\psi_\alpha(x)-\psi_\alpha(y)}_{\cH_{\tilde k}}\leq C\norm{x-y}^{\tau/2},\qquad x,y\in\QQ,\quad |\alpha|=r.
  \]
  By Cauchy--Schwarz,
  \[
    \abs{\partial^\alpha F(x)-\partial^\alpha F(y)}\leq C\norm{F}_{\cH_{\tilde k}}\norm{x-y}^{\tau/2},\qquad x,y\in\QQ,\quad |\alpha|=r.
  \]
  Combining this estimate with the uniform derivative bounds gives
  \[
    \norm{f}_{C^{r,\tau/2}(\QQ)}\leq C\norm{f}_{\cH_k}.
  \]
  Since \(r+\tau/2=s/2\), this proves \(\cH_k\hookrightarrow C^{s/2}(\QQ)\). By \cref{prop:finite-smoothness-residuals}, \(\epsilon_M\lesssim M^{-s/2}\). Finally, \(x\mapsto\tilde k(x,x)\) is continuous on the compact set \(\QQ\), and hence \(\kappa_0<\infty\).
\end{proof}

\subsection{Exponential moments and ultradifferentiability}\label{app:stretched-exp-moments}

We prove that a stretched-exponential moment of the spectral measure yields stretched-exponential residuals.

\PropExpMomentGev*

\begin{proof}
  Since \(\mu\) is symmetric, \(K\) is real-valued. Moreover, for \(N\in\Np\), \(x_1,\ldots,x_N\in\Rd\), and \(a_1,\ldots,a_N\in\R\),
  \[
    \sum_{i,j=1}^N a_i a_j\tk(x_i,x_j)=\int_{\Rd}\abs{\sum_{i=1}^N a_i e^{2\pi i \xi \cdot x_i}}^2\mu(\dif\xi)\geq0.
  \]
  Thus \(\tk\) is positive semidefinite.

  Set \(M_0:=\int_{\Rd}\exp\{\tau_0\norm{\xi}^{\gamma}\}\mu(\dif\xi)<\infty\). For \(m\in\Np\), the function \(r\mapsto r^m e^{-\tau_0r^\gamma}\) attains its maximum at \(r^\gamma=m/(\gamma\tau_0)\), and hence $r^m\leq\roundb[\big]{\frac{m}{e\gamma\tau_0}}^{m/\gamma}e^{\tau_0r^\gamma}$ for $r \geq 0$. It follows that
  \[
    \int_{\Rd}\norm{\xi}^m\mu(\dif\xi)\leq M_0\roundb[\Big]{\frac{m}{e\gamma\tau_0}}^{m/\gamma}\leq M_0(\gamma\tau_0)^{-m/\gamma}(m!)^{1/\gamma},
  \]
  where the final inequality uses \(m!\geq(m/e)^m\). After increasing the constant to include \(m=0\), there exists \(B\geq1\) such that
  \[
    \int_{\Rd}\norm{\xi}^m\mu(\dif\xi)\leq B^{m+1}(m!)^{1/\gamma},\qquad m\in\N.
  \]

  These moment bounds provide an integrable majorant for every derivative of the Fourier integrand. Dominated convergence therefore shows that \(K\in C^\infty(\Rd)\) and, for every \(\eta\in\Nd\),
  \[
    \partial^\eta K(h)=\int_{\Rd}(2\pi i\xi)^\eta e^{2\pi i \xi \cdot h}\mu(\dif\xi).
  \]
  Writing \(m:=|\eta|\), we obtain \(\abs{\partial^\eta K(h)}\leq(2\pi)^mB^{m+1}(m!)^{1/\gamma}\). Since $\frac{m!}{\eta!}=\binom{m}{\eta_1,\ldots,\eta_d}\leq d^m$, increasing \(B\) gives
  \[
    \abs{\partial^\eta K(h)}\leq B^{|\eta|+1}(\eta!)^{1/\gamma},\qquad h\in\Rd,\ \eta\in\Nd.
  \]

  For \(\alpha,\beta\in\Nd\), differentiation of \(\tk(x,y)=K(x-y)\) gives \(\partial_x^\alpha\partial_y^\beta\tk(x,y)=(-1)^{|\beta|}\partial^{\alpha+\beta}K(x-y)\). Moreover, \((\alpha_j+\beta_j)!\leq2^{\alpha_j+\beta_j}\alpha_j!\beta_j!\) for every \(j\in[d]\), and hence \((\alpha+\beta)!\leq2^{|\alpha|+|\beta|}\alpha!\beta!\). Increasing \(B\) once more yields
  \[
    \abs{\partial_x^\alpha\partial_y^\beta\tk(x,y)}\leq B^{|\alpha|+|\beta|+1}(\alpha!)^{1/\gamma}(\beta!)^{1/\gamma}
  \]
  for all \(x,y\in\Rd\) and \(\alpha,\beta\in\Nd\). Thus \(\tk\in G^{1/\gamma}(\Rd\times\Rd)\), and \cref{cor:gevrey-kernel-residual} gives $\epsilon_M\lesssim\exp\curlyb{-cM^\gamma}$.

  Finally, suppose that \(\mu(\dif\xi)=\exp\{-\tau\norm{\xi}^{\gamma}\}\dif\xi\). Then
  \[
    \int_{\Rd}\exp\curlyb[\Big]{\frac{\tau}{2}\norm{\xi}^{\gamma}}\mu(\dif\xi)=\int_{\Rd}\exp\curlyb[\Big]{-\frac{\tau}{2}\norm{\xi}^{\gamma}}\dif\xi<\infty,
  \]
  where finiteness follows by polar coordinates. Moreover, \(k(x,x)=K(0)=\mu(\Rd)<\infty\). The two complexity bounds follow from \cref{cor:canonical-residual-profiles} with \(\sigma=1/\gamma\).
\end{proof}

\subsection{Entire extensions}\label{app:entire-moments}

We first record the tensor-product Chebyshev expansion used to construct the positive semidefinite majorant.

\begin{lemma}\label{lem:analytic-chebyshev}
  Let \(q\in\Np\), let \(\varrho>1\), and let
  \(f\colon\bC^q\to\bC\) be holomorphic on an open neighbourhood of
  \(\EE_\varrho^q\). Let \(T_0,T_1,\dots\) be the Chebyshev
  polynomials of the first kind, and set
  \[
    T_\alpha(x)
    :=
    \prod_{j=1}^q T_{\alpha_j}(x_j),
    \qquad
    \alpha\in\N^q.
  \]
  Then
  \[
    f(x)
    =
    \sum_{\alpha\in\N^q}a_\alpha T_\alpha(x),
    \qquad
    x\in[-1,1]^q,
  \]
  with absolute and uniform convergence. Moreover,
  \[
    \abs{a_\alpha}
    \leq
    2^q
    \sup_{z\in\EE_\varrho^q}\abs{f(z)}
    \varrho^{-|\alpha|},
    \qquad
    \alpha\in\N^q.
  \]
\end{lemma}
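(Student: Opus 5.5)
The plan is to reduce to the classical one-dimensional Chebyshev coefficient formula through the Joukowski substitution, and then apply it coordinatewise. Put \(x_j=\cos\theta_j\) and define \(g(\theta):=f(\cos\theta_1,\dots,\cos\theta_q)\), which is \(2\pi\)-periodic and even in each variable. The coefficients are then given by
\[
  a_\alpha=\frac{2^{\#\{j:\alpha_j>0\}}}{(2\pi)^q}\int_{[-\pi,\pi]^q} g(\theta)\prod_{j}\cos(\alpha_j\theta_j)\,\dif\theta .
\]
Equivalently, set \(F(u):=f\roundb{\tfrac12(u_1+u_1^{-1}),\dots,\tfrac12(u_q+u_q^{-1})}\). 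This function is holomorphic on the polyannulus \(\{\varrho'^{-1}<|u_j|<\varrho'\}\) for some \(\varrho'>\varrho\), because \(f\) is holomorphic on an open neighbourhood of \(\EE_\varrho^q\). That neighbourhood contains \(\EE_{\varrho'}^q\) for some \(\varrho'>\varrho\), by compactness and the fact that the ellipses \(\EE_r\) increase continuously with \(r\). The function \(F\) is also invariant under \(u_j\mapsto u_j^{-1}\) in each coordinate.

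Next I expand \(F\) in a multivariate Laurent series \(F(u)=\sum_{\beta\in\mathbb Z^q}c_\beta u^\beta\), which converges absolutely and uniformly on compact subsets of the polyannulus. The inversion symmetry gives \(c_\beta=c_{\beta'}\) whenever \(|\beta_j|=|\beta'_j|\) for all \(j\). On the torus \(|u_j|=1\) we have \(u_j=e^{i\theta_j}\), and grouping the terms with the same \(|\beta|\) turns each \(u_j^{\beta_j}+u_j^{-\beta_j}\) into \(2T_{|\beta_j|}(x_j)\). This produces the Chebyshev expansion with \(a_\alpha=2^{\#\{j:\alpha_j>0\}}c_\alpha\) for \(\alpha\in\N^q\).

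For the coefficient bound, Cauchy's formula on the torus \(|u_j|=r\) with \(1<r<\varrho'\) gives
\[
  c_\alpha=\frac{1}{(2\pi i)^q}\oint F(u)u^{-\alpha-\mathbf 1}\dif u ,
  \qquad\text{so}\qquad
  |c_\alpha|\leq\sup_{|u_j|=r}|F(u)|\, r^{-|\alpha|}.
\]
When \(|u_j|=r\), the point \(\tfrac12(u_j+u_j^{-1})\) lies on \(\partial\EE_r\). Taking \(r=\varrho\) is therefore legitimate, since \(\varrho<\varrho'\), and yields \(|c_\alpha|\leq\sup_{\EE_\varrho^q}|f|\,\varrho^{-|\alpha|}\). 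Multiplying by at most \(2^q\) gives the stated bound on \(a_\alpha\).

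It remains to establish convergence. By the same Cauchy estimate at any \(r\in(\varrho,\varrho')\), the coefficients satisfy \(|a_\alpha|\lesssim r^{-|\alpha|}\), and \(|T_\alpha|\leq1\) on \([-1,1]^q\). Hence \(\sum_\alpha|a_\alpha|<\infty\), so the series converges absolutely and uniformly on \([-1,1]^q\). Its sum equals \(f\), because the Laurent series of \(F\) converges to \(F\) on the torus. The main technical point is justifying the open margin \(\varrho'>\varrho\) and the use of the multivariate Laurent expansion; both are standard, since one can iterate the one-variable Cauchy formula in each coordinate.
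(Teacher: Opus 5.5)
Your proof is correct, but it does not follow the paper's route, because the paper gives no argument of its own. It simply cites \citet[Corollary~3.2]{wang2020analysis} and \citet[Lemma~5.1]{trefethen2017multivariate}.

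You instead give a self-contained proof. You pull \(f\) back through the Joukowski map \(u\mapsto\frac12(u+u^{-1})\) in each coordinate and expand the result in a Laurent series on a polyannulus. The inversion symmetry then pairs \(u_j^{\alpha_j}\) with \(u_j^{-\alpha_j}\) into \(2T_{\alpha_j}(x_j)\). Finally, Cauchy's estimate on the torus \(\abs{u_j}=\varrho\), whose image in each coordinate is \(\partial\EE_\varrho\), bounds the coefficients. This is essentially the standard argument behind the cited results, so the two are consistent.

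The citation buys brevity. Your version makes explicit three points the paper leaves implicit:
\begin{itemize}
\item the open margin \(\varrho'>\varrho\), which places the torus \(\abs{u_j}=\varrho\) inside the domain of holomorphy;
\item where the factor \(2^q\) comes from (you in fact get the slightly sharper \(2^{\#\{j:\alpha_j>0\}}\));
\item the absolute and uniform convergence on \([-1,1]^q\).
\end{itemize}

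One small simplification: you do not need \(r\in(\varrho,\varrho')\) for summability. The bound at \(r=\varrho\) already gives \(\sum_{\alpha}\abs{a_\alpha}\le 2^q\sup_{\EE_\varrho^q}\abs{f}\,\bigl(\varrho/(\varrho-1)\bigr)^q<\infty\).
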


\begin{proof}
This is \citet[Corollary~3.2]{wang2020analysis}; see also \citet[Lemma~5.1]{trefethen2017multivariate}.
\end{proof}

\PropEntireMajorants*

\begin{proof}
  Fix \(M\in\Np\) and \(\varrho\geq2\). Applying
  \cref{lem:analytic-chebyshev} in dimension \(2d\) to \(\tk\) gives
  an absolutely and uniformly convergent expansion
  \[
    k(x,y)
    =
    \sum_{\alpha,\beta\in\Nd}
    a_{\alpha,\beta}T_\alpha(x)T_\beta(y),
    \qquad
    x,y\in\QQpm,
  \]
  such that
  \[
    \abs{a_{\alpha,\beta}}
    \leq
    2^{2d}B_{\tk}(\varrho)
    \varrho^{-|\alpha|-|\beta|}.
  \]
  For \(\alpha\in\Nd\), define
  \[
    b_\alpha
    :=
    \frac12
    \sum_{\beta\in\Nd}
    \roundb{
      \abs{a_{\alpha,\beta}}
      +
      \abs{a_{\beta,\alpha}}
    }.
  \]
  The coefficient estimate implies that
  \(\sum_{\alpha\in\Nd}b_\alpha<\infty\). Hence
  \[
    H_\varrho(x,y)
    :=
    \sum_{\alpha\in\Nd}
    b_\alpha T_\alpha(x)T_\alpha(y)
  \]
  defines a positive semidefinite kernel on \(\QQpm\). We claim that \(H_\varrho\) majorises \(k\). Fix \(n\in\Np\), \(X_n=(x_1,\dots,x_n)\in(\QQpm)^n\), and \(v\in\R^n\). For \(\alpha\in\Nd\), write $q_\alpha := \sum_{i=1}^n v_iT_\alpha(x_i)$. Absolute convergence of the Chebyshev expansion gives
  \[
    v\tran G_k(X_n)v = |\sum_{\alpha,\beta\in\Nd} a_{\alpha,\beta}q_\alpha q_\beta|
      \leq \frac12 \sum_{\alpha,\beta\in\Nd} \abs{a_{\alpha,\beta}} \roundb{q_\alpha^2+q_\beta^2}
      = \sum_{\alpha\in\Nd}b_\alpha q_\alpha^2
      = v\tran G_{H_\varrho}(X_n)v.
  \]
  Therefore $G_k(X_n)\preceq G_{H_\varrho}(X_n)$.

  Let $\cI_M := \curlyb{0,\dots,M-1}^d$ and define
  \[
      K_{M,\varrho}(x,y) :=\sum_{\alpha\in\cI_M} b_\alpha T_\alpha(x)T_\alpha(y),\qquad  R_{M,\varrho}(x,y) := \sum_{\alpha\notin\cI_M} b_\alpha T_\alpha(x)T_\alpha(y).
  \]
  Both kernels are positive semidefinite, and
  \(H_\varrho=K_{M,\varrho}+R_{M,\varrho}\). It follows that
  \[
    G_k(X_n)
    \preceq
    G_{K_{M,\varrho}}(X_n)
    +
    G_{R_{M,\varrho}}(X_n).
  \]
  Since \(K_{M,\varrho}\) is a sum of \(M^d\) rank-one kernels, $\rank G_{K_{M,\varrho}}(X_n) \leq M^d$.

  It remains to bound the residual diagonal. Since
  \(\varrho\geq2\),
  \[
    \sum_{\beta\in\Nd}\varrho^{-|\beta|}  = \roundb[\Big]{\frac{\varrho}{\varrho-1}}^d \leq 2^d.
  \]
  Hence there exists a constant \(C_d>0\), depending only on \(d\),
  such that
  \[
    b_\alpha
    \leq
    C_d B_{\tk}(\varrho)\varrho^{-|\alpha|},
    \qquad
    \alpha\in\Nd.
  \]
  Since \(\abs{T_\alpha(x)}\leq1\) on \(\QQpm\),
  \[
    R_{M,\varrho}(x,x)
    \leq
    C_d B_{\tk}(\varrho)
    \sum_{\alpha\notin\cI_M}\varrho^{-|\alpha|}.
  \]
  Every \(\alpha\notin\cI_M\) has \(\alpha_j\geq M\) for some
  \(j\in[d]\). Therefore
  \[
    \sum_{\alpha\notin\cI_M}\varrho^{-|\alpha|} \leq d\roundb[\bigg]{\sum_{m=M}^\infty\varrho^{-m}}\roundb[\bigg]{\sum_{m=0}^\infty\varrho^{-m}}^{d-1}
    \leq 2^d d \varrho^{-M}.
  \]
  Increasing \(C_d\) gives
  \[
    \sup_{x\in\QQpm}R_{M,\varrho}(x,x)
    \leq
    C_dB_{\tk}(\varrho)\varrho^{-M},
  \]
  as required.
\end{proof}

We next verify the finite-order condition for stationary kernels from a super-exponential moment of the spectral measure.

\PropStatFiniteOrder*

\begin{proof}
  By Young's inequality, there exists \(c_0>0\) such that
  \[
    2\pi rs
    \leq
    \frac{\tau_0}{2}s^\gamma+c_0r^p,
    \qquad
    r,s\geq0.
  \]
  Define
  \[
    \tk(z,w)
    :=
    \int_{\Rd}
    \exp\curlyb{2\pi i\xi\cdot(z-w)}
    \mu(\dif\xi),
    \qquad
    z,w\in\bC^d.
  \]
  For real \(x,y\in\Rd\), this agrees with \(k(x,y)\).

  We first verify that \(\tk\) is entire. Let
  \(L\subset\bC^d\times\bC^d\) be compact, and set
  \[
    R_L
    :=
    \sup_{(z,w)\in L}
    \norm{\Im(z-w)}.
  \]
  For multi-indices \(\alpha,\beta\in\Nd\), differentiating the
  integrand formally gives
  \[
    \partial_z^\alpha\partial_w^\beta
    \exp\curlyb{2\pi i\xi\cdot(z-w)}
    =
    (2\pi i\xi)^\alpha
    (-2\pi i\xi)^\beta
    \exp\curlyb{2\pi i\xi\cdot(z-w)}.
  \]
  Its absolute value is at most
  \[
    (2\pi)^{|\alpha|+|\beta|}
    \norm{\xi}^{|\alpha|+|\beta|}
    \exp\curlyb{2\pi R_L\norm{\xi}}.
  \]
  The preceding form of Young's inequality gives
  \[
    \exp\curlyb{2\pi R_L\norm{\xi}}
    \leq
    \exp\curlyb{c_0R_L^p}
    \exp\curlyb[\Big]{\frac{\tau_0}{2}\norm{\xi}^{\gamma}}.
  \]
  Moreover, for every \(m\in\N\), there exists \(C_m>0\) such that
  \[
    r^m
    \leq
    C_m
    \exp\curlyb[\Big]{\frac{\tau_0}{2}r^\gamma},
    \qquad
    r\geq0.
  \]
  Thus every derivative of the integrand is bounded on \(L\) by a
  constant multiple of $\exp\curlyb{\tau_0\norm{\xi}^{\gamma}}$, which is integrable with respect to \(\mu\). Dominated convergence
  therefore permits differentiation under the integral in every
  complex coordinate. Hence \(\tk\) is entire.

  The same Young inequality gives, for all \(z,w\in\bC^d\),
  \begin{align*}
    \abs{\tk(z,w)}
    &\leq \int_{\Rd}\exp\curlyb{2\pi\norm{\xi}\norm{\Im(z-w)}}\mu(\dif\xi)\\
    &\leq \exp\curlyb{c_0\norm{\Im(z-w)}^p} \int_{\Rd}\exp\curlyb[\Big]{\frac{\tau_0}{2}\norm{\xi}^{\gamma}}\mu(\dif\xi).
  \end{align*}
  The integral on the right is finite. Since
  \[
    \norm{\Im(z-w)}
    \leq
    2\sqrt d\,
    \max\roundb{\norm{z}_\infty,\norm{w}_\infty},
  \]
  there exist \(C_1,c_1>0\) such that
  \[
    \abs{\tk(z,w)}
    \leq
    C_1
    \exp\curlyb{
      c_1
      \max\roundb{
        \norm{z}_\infty^p,
        \norm{w}_\infty^p
      }
    },
    \qquad
    z,w\in\bC^d.
  \]
  Thus the restriction of \(k\) to
  \(\QQpm\times\QQpm\) is entire of order at most \(p\). The residual
  bound follows from
  \cref{cor:finite-order-entire-residual}.
\end{proof}

\clearpage

\section{Proofs of lower bounds}
\label{sec:lower-bounds}

This appendix proves the three lower-bound theorems. The algebraic and stretched-exponential proofs use lattice Fourier directions, while the super-exponential proof uses Mercer directions. Each argument first treats $a:=n/\rho$ above a fixed constant; the repeated-point construction below extends the bounds to the full range $1\leq a\leq n$.

\begin{lemma}[Repeated-point design]
\label{lem:repeated-point-lower}
  Let \(k\) be a positive semidefinite kernel on a set \(\cX\), and suppose that \(k(x_0,x_0)=\kappa>0\) for some \(x_0\in\cX\). Then, for every \(n\in\Np\) and \(0<\rho\leq n\),
  \[
    \Effd_n^k(\cX;\rho)\geq\frac{\kappa}{1+\kappa}\spaced{and}\Gamma_n^k(\cX;\rho)\geq\log(1+\kappa).
  \]
\end{lemma}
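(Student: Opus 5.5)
The plan is to use the design with every point equal to \(x_0\): \(X_n=(x_0,\dots,x_0)\in\cX^n\). Its Gram matrix is \(G=\kappa\,\mathbf 1\mathbf 1\tran\), where \(\mathbf 1\in\R^n\) is the all-ones vector. This matrix has rank one, with single nonzero eigenvalue \(\lambda_1=n\kappa\) and all other eigenvalues equal to zero.

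We then evaluate both quantities directly from this spectrum. For the effective dimension,
\[
  \effd^k(X_n;\rho)=\frac{n\kappa}{n\kappa+\rho},
\]
and for the information gain,
\[
  \gamma^k(X_n;\rho)=\log\roundb{1+n\kappa/\rho}.
\]
Both expressions are increasing in \(n/\rho\). The assumption \(\rho\leq n\) gives \(n/\rho\geq1\), so \(n\kappa/\rho\geq\kappa\). Therefore
\[
  \frac{n\kappa}{n\kappa+\rho}=\frac{n\kappa/\rho}{1+n\kappa/\rho}\geq\frac{\kappa}{1+\kappa}
  \qquad\text{and}\qquad
  \log\roundb{1+n\kappa/\rho}\geq\log(1+\kappa).
\]
Taking the supremum over designs yields both bounds in \cref{lem:repeated-point-lower}.

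Alternatively, the same conclusion follows from \cref{prop:resolved-directions} with \(m=1\), \(U=\mathbf 1/\sqrt n\) and \(\theta=n\kappa/\rho\geq\kappa\), since \(U^*GU=n\kappa\). There is no real obstacle in this argument. The only point that needs care is using \(\rho\leq n\) to pass from \(n\kappa/\rho\) to \(\kappa\), together with the monotonicity of \(t\mapsto t/(1+t)\) and of \(t\mapsto\log(1+t)\).
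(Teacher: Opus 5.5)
Your proposal is correct and matches the paper's proof: both take the design $X_n=(x_0,\dots,x_0)$, whose rank-one Gram matrix has sole nonzero eigenvalue $n\kappa$, and then use $\rho\leq n$ with monotonicity in $n\kappa/\rho$ to obtain both bounds. Your alternative route through \cref{prop:resolved-directions} with $m=1$ is also valid, but it is just a repackaging of the same computation.
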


\begin{proof}
  Take \(X_n=(x_0,\dots,x_0)\). Then \(G_k(X_n)=\kappa\one\one\tran\), whose only nonzero eigenvalue is \(n\kappa\). Hence
  \[
    \Effd_n^k(\cX;\rho)\geq\frac{n\kappa}{n\kappa+\rho}\geq\frac{\kappa}{1+\kappa},\qquad \Gamma_n^k(\cX;\rho)\geq\log\roundb[\Big]{1+\frac{n\kappa}{\rho}}\geq\log(1+\kappa). \qedhere
  \]
\end{proof}

\begin{remark}
\label{rem:extend-lower-bounds}
Let $a_0\geq1$, and let $q_1,q_2\colon[1,\infty)\to[0,\infty)$ be bounded on $[1,a_0]$. Suppose that, for every $n\in\Np$ and $0<\rho\leq n$ with $a:=n/\rho\in[a_0,n]$,
\[
  \Effd_n^k(\cX;\rho)\geq Cq_1(a)\spaced{and}\Gamma_n^k(\cX;\rho)\geq Cq_2(a).
\]
Each spectral lower bound considered below implies $k(0,0)>0$, and therefore \cref{lem:repeated-point-lower} extends both bounds to $a\in[1,n]$ after possibly decreasing~$C$.
\end{remark}

\subsection{Lattice Fourier preliminaries}\label{sec:lattice-fourier-lower}

We record the Fourier convention, Poisson summation formula, and lattice compression used in the algebraic and stretched-exponential proofs.

For \(f,g\colon\Rd\to\bC\), write \((f*g)(x):=\int_{\Rd}f(x-y)g(y)\dif y\).  We use the Fourier transform convention
\[
  \hat f(\xi):=\int_{\Rd}f(x)e^{-2\pi i \xi \cdot x}\dif x.
\]
With this convention, \(\widehat{f*g}=\hat f\,\hat g\) and \(\widehat{fg}=\hat f*\hat g\) whenever both sides are defined.

\begin{proposition}[Poisson summation]
\label{prop:poisson-summation}
  Let \(f\colon\Rd\to\bC\) be continuous, and suppose that, for some \(p>d\),
  \[
    \abs{f(x)}\lesssim\langle x\rangle^{-p}\spaced{and}\abs{\hat f(\xi)}\lesssim\langle\xi\rangle^{-p},\qquad x,\xi\in\Rd.
  \]
  Then, for every \(T>0\),
  \[
    \sum_{\ell\in\Zd}f(x+T\ell)=\sum_{m\in\Zd}T^{-d}\hat f(m/T)e^{2\pi i (m/T) \cdot x},\qquad x\in\Rd.
  \]
  Both series converge absolutely and uniformly on \([0,T]^d\).
\end{proposition}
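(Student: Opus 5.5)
The plan is to reduce to the case \(T=1\) and prove the identity by the standard periodisation argument. Set \(F(x):=\sum_{\ell\in\Zd}f(x+T\ell)\). The bound \(\abs{f(x)}\lesssim\langle x\rangle^{-p}\) with \(p>d\) gives, for \(x\in[0,T]^d\), the majorant \(\langle x+T\ell\rangle^{-p}\lesssim_T\langle T\ell\rangle^{-p}\) for all but finitely many \(\ell\), uniformly in \(x\) in the cube. This majorant is summable over \(\Zd\) because \(p>d\). The Weierstrass M-test then gives absolute and uniform convergence on \([0,T]^d\), and hence \(F\) is continuous and \(T\Zd\)-periodic. On the right-hand side, \(\abs{T^{-d}\hat f(m/T)}\lesssim T^{-d}\langle m/T\rangle^{-p}\) is summable for the same reason, and each term has modulus independent of \(x\). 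So that series also converges absolutely and uniformly, to a continuous periodic function \(G\).

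Next I compute the Fourier coefficients of \(F\) on the torus \(\Rd/T\Zd\). For \(m\in\Zd\), uniform convergence lets me interchange sum and integral:
\[
  T^{-d}\int_{[0,T]^d}F(x)e^{-2\pi i(m/T)\cdot x}\dif x
  =T^{-d}\sum_{\ell\in\Zd}\int_{[0,T]^d+T\ell}f(y)e^{-2\pi i(m/T)\cdot y}\dif y
  =T^{-d}\hat f(m/T).
\]
This uses periodicity of the exponential under \(y=x+T\ell\) and the fact that the translated cubes tile \(\Rd\) up to null sets. Since \(f\in L^1\), the tiling integral is absolutely convergent.

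So \(F\) and \(G\) are continuous \(T\)-periodic functions with the same Fourier coefficients. The coefficients of \(G\) are read off termwise, which is legitimate by uniform convergence. By uniqueness of Fourier coefficients for continuous (indeed \(L^1\)) functions on the torus, for instance via density of trigonometric polynomials (Fejér/Stone–Weierstrass), \(F=G\) everywhere, and this is the claimed identity.

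The only step needing care is the uniform summability of the periodised series in \(x\). Concretely, one must check that \(\langle x+T\ell\rangle\gtrsim\langle T\ell\rangle\) uniformly for \(x\in[0,T]^d\), and this follows from \(\norm{x+T\ell}\geq T\norm{\ell}-T\sqrt d\). The rest is routine; the decay hypothesis on \(\hat f\) is what removes any need for a pointwise Fourier-convergence theorem.
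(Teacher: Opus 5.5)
Your proposal is correct and follows essentially the same argument as the paper. The paper rescales to period one via $g(y):=f(Ty)$ and cites Grafakos's Theorem~3.2.8, whose standard proof (periodise, identify the torus Fourier coefficients as $\hat f$ at lattice points, then invoke uniqueness using summability of the coefficients) is exactly what you carry out, only directly at period $T$, so the reduction to $T=1$ you announce is never actually needed.
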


\begin{proof}
  Apply \citet[Theorem~3.2.8]{grafakos2008classical} to \(g(y):=f(Ty)\) and evaluate the resulting identity at \(y=x/T\).
\end{proof}

We next identify Fourier directions on the uniform lattice along which the periodised Gram matrix is diagonal. For \(L\in\Np\), define
\[
  \Lambda_L:=L^{-1}\curlyb{0,\dots,L-1}^d\subset\QQ,\qquad n_0:=\card(\Lambda_L)=L^d.
\]
For \(m\in\curlyb{0,\dots,L-1}^d\), define
\[
  u_m:=n_0^{-1/2}\roundb{e^{2\pi i m \cdot x}}_{x\in\Lambda_L}\in\bC^{n_0}.
\]
For \(\cM\subset\curlyb{0,\dots,L-1}^d\), let \(U_\cM:=\squareb{u_m}_{m\in\cM}\).

\begin{proposition}
\label{prop:lattice-fourier-directions}
  Let $L,T\in\Np$ and let $\cM\subset\curlyb{0,\dots,L-1}^d$. Let \(K\) be a stationary kernel on \(\Rd\) with spectral density \(S\), and suppose that its \(T\)-periodisation
  \[
    K_T^\#(x):=\sum_{\ell\in\Zd}K(x+T\ell)
  \]
  has the absolutely convergent Fourier expansion
  \[
    K_T^\#(x)=\sum_{r\in\Zd}T^{-d}S(r/T)e^{2\pi i (r/T) \cdot x}.
  \]
  Set \(E_{L,T}:=G_K(\Lambda_L)-G_{K_T^\#}(\Lambda_L)\). Then \(U_\cM^*U_\cM=I\), and
  \[
    U_\cM^*G_K(\Lambda_L)U_\cM\succeq\diag\roundb{n_0T^{-d}S(m):m\in\cM}-\norm{E_{L,T}}_{\op}I.
  \]
\end{proposition}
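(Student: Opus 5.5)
The plan is to compute the periodised Gram matrix on the lattice directly and split \(G_K(\Lambda_L)=G_{K_T^\#}(\Lambda_L)+E_{L,T}\).

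\textbf{Orthonormality.} For \(m,m'\in\{0,\dots,L-1\}^d\) we have
\[
  u_{m'}^*u_m=n_0^{-1}\sum_{x\in\Lambda_L}e^{2\pi i(m-m')\cdot x}.
\]
Each coordinate factor is a geometric sum of \(L\)-th roots of unity. It equals \(L\) if \(m_j\equiv m'_j \pmod L\) and \(0\) otherwise. Since the entries lie in \(\{0,\dots,L-1\}\), congruence mod \(L\) forces equality, so \(U_\cM^*U_\cM=I\).

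\textbf{Diagonalising the periodised Gram matrix.} Substitute the absolutely convergent expansion of \(K_T^\#\) into
\[
  u_m^*G_{K_T^\#}(\Lambda_L)u_{m'}=n_0^{-1}\sum_{x,y\in\Lambda_L}e^{-2\pi i m\cdot x}K_T^\#(x-y)e^{2\pi i m'\cdot y}.
\]
Absolute convergence lets us exchange the finite sums with the series over \(r\in\Zd\). The \(x\)-sum of \(e^{2\pi i(r/T-m)\cdot x}\) then runs over the lattice \(L^{-1}\{0,\dots,L-1\}^d\), and the \(y\)-sum of \(e^{-2\pi i(r/T-m')\cdot y}\) likewise. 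Because \(r/T\) need not be an integer vector, these character sums are not clean indicators in general, so I avoid computing off-diagonal entries directly. Instead I factor
\[
  G_{K_T^\#}(\Lambda_L)=\sum_{r}T^{-d}S(r/T)\,w_rw_r^*,\qquad w_r:=\roundb{e^{2\pi i (r/T)\cdot x}}_{x\in\Lambda_L}.
\]
This is a sum of positive semidefinite rank-one terms, since \(S\ge0\). Hence dropping all terms except \(r=Tm\) for \(m\in\cM\) gives a lower bound in the Loewner order. For \(r=Tm\) we get \(w_{Tm}=n_0^{1/2}u_m\). These \(u_m\) are orthonormal, so
\[
  U_\cM^*\roundb[\Big]{\sum_{m\in\cM}T^{-d}S(m)\,n_0u_mu_m^*}U_\cM=\diag\roundb{n_0T^{-d}S(m)}.
\]
Distinct \(m\in\cM\) give distinct \(r=Tm\), so no term is double counted. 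Conjugating the Loewner inequality by \(U_\cM\) then gives
\[
  U_\cM^*G_{K_T^\#}(\Lambda_L)U_\cM\succeq\diag\roundb{n_0T^{-d}S(m):m\in\cM}.
\]

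\textbf{Error term.} The difference \(E_{L,T}\) is Hermitian: \(K\) and \(K_T^\#\) are real and even by symmetry of \(S\), or at least both Gram matrices are Hermitian. Therefore
\[
  U_\cM^*E_{L,T}U_\cM\succeq-\norm{E_{L,T}}_{\op}U_\cM^*U_\cM=-\norm{E_{L,T}}_{\op}I.
\]
Adding this to the previous inequality proves the claim.

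\textbf{Main obstacle.} The delicate step is justifying the rank-one decomposition: convergence of \(\sum_r T^{-d}S(r/T)w_rw_r^*\) entrywise, and the positivity of the discarded tail. Both follow from the assumed absolute convergence and \(S\ge0\). Using this decomposition sidesteps any aliasing computation for non-integer frequencies.
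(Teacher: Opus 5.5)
Your proposal is correct and follows the paper's proof step for step: orthonormality from roots-of-unity sums, the rank-one decomposition $G_{K_T^\#}(\Lambda_L)=\sum_{r}T^{-d}S(r/T)w_rw_r^*$ with the positive semidefinite terms for $r\notin T\cM$ discarded, the identification $w_{Tm}=n_0^{1/2}u_m$, and the bound $U_\cM^*E_{L,T}U_\cM\succeq-\norm{E_{L,T}}_{\op}I$ for the Hermitian error. Your remarks on entrywise absolute convergence and on the positive semidefinite cone being closed under limits spell out a step that the paper leaves implicit.
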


\begin{proof}
  For \(m,m'\in\curlyb{0,\dots,L-1}^d\),
  \[
    u_m^*u_{m'}=\prod_{j=1}^d\frac1L\sum_{\ell=0}^{L-1}e^{2\pi i(m_j'-m_j)\ell/L}=\1{m=m'}.
  \]
  Hence \(U_\cM^*U_\cM=I\). For \(r\in\Zd\), let \(v_r:=\roundb{e^{2\pi i (r/T) \cdot x}}_{x\in\Lambda_L}\). The Fourier expansion gives
  \[
    G_{K_T^\#}(\Lambda_L)=\sum_{r\in\Zd}T^{-d}S(r/T)v_rv_r^*\succeq\sum_{m\in\cM}T^{-d}S(m)v_{Tm}v_{Tm}^*.
  \]
  Since \(v_{Tm}=n_0^{1/2}u_m\), compression by \(U_\cM\) yields
  \[
    U_\cM^*G_{K_T^\#}(\Lambda_L)U_\cM\succeq\diag\roundb{n_0T^{-d}S(m):m\in\cM}.
  \]
  Finally, \(U_\cM^*E_{L,T}U_\cM\succeq-\norm{E_{L,T}}_{\op}I\), which proves the result.
\end{proof}

\subsection{Algebraic spectral decay}\label{app:algebraic-lb}

We first construct a witness kernel with the required spectral decay and compact spatial support; compact support makes periodisation exact on the lattice.

\begin{lemma}[Compactly supported algebraic witness]
\label{lem:algebraic-witness}
  Let \(r>d\). There exists a stationary positive semidefinite kernel \(K\) on \(\Rd\), with spectral density \(S\), such that
  \[
    S(\xi)\asymp\langle\xi\rangle^{-r},\qquad \xi\in\Rd,
  \]
  and \(\supp K\subset(-1,1)^d\).
\end{lemma}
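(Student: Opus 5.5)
Construct \(K\) as an autocorrelation \(K:=\phi*\tilde\phi\), with \(\tilde\phi(x):=\overline{\phi(-x)}\), where \(\phi\) is supported in \((-\tfrac12,\tfrac12)^d\). Then \(\supp K\subset\supp\phi-\supp\phi\subset(-1,1)^d\). Moreover \(\hat K=\abs{\hat\phi}^2\geq0\), so \(K\) is a positive semidefinite stationary kernel with spectral density \(S=\abs{\hat\phi}^2\). This reduces the lemma to a one-function problem: find \(\phi\in L^2\), supported in the open cube of side one, with
\[
  \abs{\hat\phi(\xi)}\asymp\langle\xi\rangle^{-r/2},\qquad \xi\in\Rd.
\]
Since \(r>d\), we have \(\langle\xi\rangle^{-r/2}\in L^2\), so such a \(\phi\) is at least consistent with being square integrable.

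\textbf{Upper bound.} Start with a singularity of the right strength and then localise. Let \(\beta:=r/2-d/2>0\) and take
\[
  \phi_0(x):=\norm{x}^{\beta-d/2}\chi(x)\quad\text{(or, for noninteger \(\beta\), a Bessel-potential kernel \(G_{r/2}\) times \(\chi\)),}
\]
where \(\chi\in C_c^\infty\) is a bump supported in \((-\tfrac12,\tfrac12)^d\) with \(\chi\geq0\) and \(\chi(0)>0\). A cleaner choice is \(\phi_0:=G_{r/2}\chi\), with \(G_{r/2}\) the Bessel potential, \(\widehat{G_{r/2}}=\langle\xi\rangle^{-r/2}\). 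Then \(\hat\phi_0=\langle\cdot\rangle^{-r/2}*\hat\chi\). Because \(\hat\chi\) is Schwartz and \(\langle\cdot\rangle^{-r/2}\) is slowly varying (Peetre's inequality \(\langle\xi-\eta\rangle^{-r/2}\lesssim\langle\xi\rangle^{-r/2}\langle\eta\rangle^{r/2}\)), this gives \(\abs{\hat\phi_0(\xi)}\lesssim\langle\xi\rangle^{-r/2}\).

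\textbf{Lower bound (main obstacle).} The convolution could oscillate and vanish, so the bound from below is the delicate step. For large \(\abs\xi\), split \(\langle\xi-\eta\rangle^{-r/2}=\langle\xi\rangle^{-r/2}+\bigl(\langle\xi-\eta\rangle^{-r/2}-\langle\xi\rangle^{-r/2}\bigr)\). The first term contributes exactly \(\langle\xi\rangle^{-r/2}\chi(0)\). For the second, use the mean value theorem for \(\abs\eta\leq\abs\xi/2\) and the rapid decay of \(\hat\chi\) for \(\abs\eta>\abs\xi/2\); together these bound it by \(C\langle\xi\rangle^{-r/2-1}+C_N\langle\xi\rangle^{-N}\). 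This is smaller than the main term, since \(G_{r/2}\) is smooth away from \(0\) (the singular behaviour sits at the origin, where \(\chi(0)\neq0\)). Hence \(\abs{\hat\phi_0(\xi)}\gtrsim\langle\xi\rangle^{-r/2}\) for \(\abs\xi\geq R\).

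\textbf{Bounded frequencies.} On \(\abs\xi\leq R\) we need \(\hat\phi\) to have no zeros. Set \(\phi:=\phi_0+\epsilon\psi\), or more simply choose \(\chi\) as a convolution square so that \(\phi_0\geq0\). Since \(G_{r/2}>0\) and \(\chi\geq0\), we get \(\phi_0\geq0\), hence \(\hat\phi_0(0)>0\), but positivity at \(\xi=0\) alone does not prevent zeros elsewhere. The fix is to add \(\delta\cdot\hat{}\)-positive mass: take \(S:=\abs{\hat\phi_0}^2+\abs{\hat\psi}^2\), where \(\psi\in C_c^\infty((-\tfrac12,\tfrac12)^d)\) has \(\hat\psi\neq0\) on the ball \(\{\abs\xi\leq R\}\). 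For example, let \(\psi\) be a Gaussian-like bump with small support, whose \(\hat\psi\) stays near \(\hat\psi(0)>0\) on that ball after rescaling: scale \(\psi(x)=\epsilon^{-d}\psi_1(x/\epsilon)\) so that \(\hat\psi(\xi)=\hat\psi_1(\epsilon\xi)\approx\hat\psi_1(0)\).

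\textbf{Conclusion.} The kernel \(K:=\phi_0*\tilde\phi_0+\psi*\tilde\psi\) is supported in \((-1,1)^d\), and its spectral density satisfies \(S\asymp\langle\xi\rangle^{-r}\). The contribution \(\abs{\hat\psi}^2\) is rapidly decaying, so it does not affect the large-\(\xi\) upper bound. On compact sets both bounds hold by continuity, and positivity of \(S\) there comes from the \(\abs{\hat\psi}^2\) term.
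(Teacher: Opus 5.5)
Your proof is correct, but it takes a different route from the paper's. The paper localises by multiplication in space rather than by autocorrelation. With $K_0$ the kernel of spectral density $\langle\xi\rangle^{-r}$, it sets $K:=\eta K_0$, where $\eta:=w*\widetilde w/(\int w)^2$ for a nonnegative bump $w$ supported in $(-\tfrac12,\tfrac12)^d$. Then $S=\langle\cdot\rangle^{-r}*\hat\eta$ is a convolution of two \emph{nonnegative} functions, so no cancellation can occur. The upper bound is Peetre's inequality, as in your argument. The lower bound holds for all $\xi$ at once: $\hat\eta\ge c$ on a ball $\norm{\omega}\le\delta$ gives $S(\xi)\ge c\int_{\norm{\omega}\le\delta}\langle\xi-\omega\rangle^{-r}\dif\omega\gtrsim\langle\xi\rangle^{-r}$.

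Your choice $\phi_0=G_{r/2}\chi$ with an arbitrary bump $\chi$ allows $\hat\chi$ to change sign. That is why you need the mean-value/Schwartz-tail estimate at high frequency and the additive patch $\psi*\widetilde\psi$ at low frequency. Both steps are sound. The high-frequency one even gives the sharper asymptotic $\hat\phi_0(\xi)=\chi(0)\langle\xi\rangle^{-r/2}\bigl(1+O(\langle\xi\rangle^{-1})\bigr)$ for every bump. The case split is nonetheless avoidable, and the paper's version is shorter.

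Your own aside about taking $\chi$ to be a convolution square is the fix, though for a different reason than you give. With $\chi=w*\widetilde w$, $w\ge0$ and $\supp w\subset(-\tfrac14,\tfrac14)^d$, one has $\hat\chi=\abs{\hat w}^2\ge0$. The paper's positivity argument then gives $\hat\phi_0\gtrsim\langle\cdot\rangle^{-r/2}$ everywhere, with no need for $\psi$. What matters is $\hat\chi\ge0$, not $\phi_0\ge0$.

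As a cleanup, delete the first candidate $\norm{x}^{\beta-d/2}\chi(x)$. It fails whenever $r/2-d$ is an even nonnegative integer, because then $\phi_0\in C_c^\infty$ and $\hat\phi_0$ is Schwartz. For example, $d=1$, $r=6$ gives $\phi_0=x^2\chi$ even though $\beta$ is noninteger, so your ``noninteger $\beta$'' proviso does not capture the failure. Also, the justification you attach to the high-frequency step (smoothness of $G_{r/2}$ away from $0$) is not what makes it work: the remainder is simply one order smaller in $\langle\xi\rangle$. The Bessel-potential choice works for every $r>d$.
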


\begin{proof}
  Choose a nonzero, nonnegative function \(w\in C_c^\infty(\Rd)\) with \(\supp w\subset(-1/2,1/2)^d\), and define
  \[
    \widetilde w(x):=w(-x),\qquad \eta:=\frac{w*\widetilde w}{\roundb{\int_{\Rd}w(x)\dif x}^2}.
  \]
  Then \(\eta\in C_c^\infty(\Rd)\), \(\supp\eta\subset(-1,1)^d\), and
  \[
    \hat\eta(\xi)=\frac{\abs{\hat w(\xi)}^2}{\roundb{\int_{\Rd}w(x)\dif x}^2}\geq0,\qquad \hat\eta(0)=1.
  \]
  Let \(S_0(\xi):=\langle\xi\rangle^{-r}\) and \(S:=S_0*\hat\eta\). Then \(S\geq0\) and \(S\in L^1(\Rd)\). If \(K_0\) and \(K\) are the stationary kernels generated by \(S_0\) and \(S\), respectively, then \(\widehat{\eta K_0}=\hat\eta*S_0=S\), and hence \(K=\eta K_0\). Thus \(\supp K\subset(-1,1)^d\).

  Peetre's inequality \citep[Equation~(2.21)]{treves1980introduction} gives
  \[
    S(\xi)\leq2^{r/2}\langle\xi\rangle^{-r}\int_{\Rd}\hat\eta(\omega)\langle\omega\rangle^r\dif\omega\lesssim\langle\xi\rangle^{-r}.
  \]
  Since \(\hat\eta\) is continuous and \(\hat\eta(0)=1\), there exist \(c,\delta>0\) such that \(\hat\eta(\omega)\geq c\) whenever \(\norm{\omega}\leq\delta\). Moreover, \(\langle\xi-\omega\rangle\leq C_\delta\langle\xi\rangle\) on this ball. Therefore
  \[
    S(\xi)\geq c\int_{\norm{\omega}\leq\delta}\langle\xi-\omega\rangle^{-r}\dif\omega\gtrsim\langle\xi\rangle^{-r}.\qedhere
  \]
\end{proof}

With this witness, the lattice construction gives the algebraic lower bound.

\ThmPolyLB*

\begin{proof}
  Write \(a:=n/\rho\), and choose \(a_0\geq1\), depending only on \(d,r,c_0\), sufficiently large for the estimates below. We first assume that \(a\geq a_0\).

  Let \(K\) and \(S\) be given by \cref{lem:algebraic-witness}. Since \(s(\xi)\gtrsim\langle\xi\rangle^{-r}\asymp S(\xi)\), \cref{lem:spectral-domination} gives
  \[
    G_k(X)\succeq cG_K(X)
  \]
  for every finite design \(X\).

  Let \(L:=\floor{n^{1/d}}\), \(n_0:=L^d\), and \(b:=n_0/\rho\). Then \(2^{-d}a\leq b\leq a\). Since \(\supp K\subset(-1,1)^d\), the \(2\)-periodisation \(K_2^\#(x):=\sum_{\ell\in\Zd}K(x+2\ell)\) satisfies \(G_K(\Lambda_L)=G_{K_2^\#}(\Lambda_L)\). By \cref{prop:poisson-summation},
  \[
    K_2^\#(x)=\sum_{q\in\Zd}2^{-d}S(q/2)e^{2\pi i (q/2) \cdot x},
  \]
  with absolute and uniform convergence.

  Fix \(A\in(0,1]\), to be chosen below, and set \(M:=\floor{Ab^{1/r}}\) and \(\cM:=\curlyb{0,\dots,M-1}^d\). Increasing \(a_0\) if necessary gives \(M\geq1\). Since \(b\leq n_0=L^d\) and \(r>d\), \(M\leq b^{1/r}\leq L^{d/r}\leq L\).

  For \(m\in\cM\), \(S(m)\gtrsim M^{-r}\). Applying \cref{prop:lattice-fourier-directions} with \(T=2\) and \(E_{L,2}=0\) gives
  \[
    \rho^{-1}U_\cM^*G_k(\Lambda_L)U_\cM\succeq cbM^{-r}I\succeq cA^{-r}I.
  \]
  Choose \(A>0\) sufficiently small that the final matrix is bounded below by \(I\). Append arbitrary points of \(\QQ\) to \(\Lambda_L\) to obtain \(X_n\in(\QQ)^n\), and extend \(U_\cM\) by zero rows to \(\widetilde U_\cM\in\bC^{n\times M^d}\). Then \(\widetilde U_\cM^*\widetilde U_\cM=I\) and
  \[
    \widetilde U_\cM^*G_k(X_n)\widetilde U_\cM\succeq\rho I.
  \]
  By \cref{prop:resolved-directions},
  \[
    \Effd_n^k(\QQ;\rho)\geq\frac12M^d\spaced{and}\Gamma_n^k(\QQ;\rho)\geq(\log2)M^d.
  \]
  Since \(M^d\asymp b^{d/r}\asymp a^{d/r}\), this proves the result when \(a\geq a_0\). Finally, \cref{rem:extend-lower-bounds} extends these bounds from \(a\in[a_0,n]\) to \(a\in[1,n]\).
\end{proof}

\subsection{Stretched-exponential spectral decay}\label{app:sub-exp-lb}

Here periodisation is no longer exact. The next lemma gives the spatial decay needed to control the periodisation error.

\begin{lemma}[name={},restate=exponentialDualLemma]
\label{lem:stretched-exp-dual}
  Let \(\tau>0\), let \(0<\gamma\leq1\), and let \(k\colon\Rd\to\R\) be the kernel generated by
  \[
    s(\xi)=e^{-\tau\norm{\xi}^{\gamma}}.
  \]
  Then
  \[
    \abs{k(x)}\lesssim\langle x\rangle^{-(d+\gamma)},\qquad x\in\Rd.
  \]
  In particular, both \(k\) and \(s\) satisfy the conditions of \cref{prop:poisson-summation}.
\end{lemma}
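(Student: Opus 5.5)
The plan is to estimate \(k(x)=\int_{\Rd}e^{2\pi i\xi\cdot x}s(\xi)\dif\xi\) directly through a frequency decomposition. The only obstruction to rapid spatial decay is the non-smooth point \(\xi=0\), where \(s(\xi)-1\) behaves like \(-\tau\norm{\xi}^\gamma\). Since \(\abs{k(x)}\leq k(0)=\int s<\infty\), it suffices to treat \(\norm{x}\geq1\).

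\textbf{Step 1: split at the origin.} Fix \(\varphi\in C_c^\infty(\Rd)\) with \(\varphi=1\) on \(\{\norm{\xi}\leq1/2\}\) and \(\supp\varphi\subset\{\norm{\xi}\leq1\}\). Write
\[
  s=\varphi+\varphi\,(s-1)+(1-\varphi)\,s .
\]
\begin{itemize}
  \item The term \(\varphi\) has Schwartz inverse transform.
  \item On \(\supp(1-\varphi)\), \(s\) is \(C^\infty\). By Fa\`a di Bruno, \(\abs{\partial^\alpha s(\xi)}\lesssim_\alpha\langle\xi\rangle^{C_\alpha}e^{-\tau\norm{\xi}^\gamma}\) there, which is integrable. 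Integrating by parts \(N\) times then gives \(O(\norm{x}^{-N})\) for every \(N\).
\end{itemize}
So everything reduces to the inverse Fourier transform of \(g:=\varphi\,(s-1)\).

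\textbf{Step 2: symbol estimates near the origin.} I would show that for \(0<\norm{\xi}\leq1\) and every \(\alpha\in\Nd\),
\[
  \abs{\partial^\alpha(s-1)(\xi)}\lesssim_\alpha\norm{\xi}^{\gamma-\abs{\alpha}} .
\]
\begin{itemize}
  \item For \(\alpha=0\), this follows from \(\abs{e^{-u}-1}\leq u\).
  \item For \(\abs{\alpha}\geq1\), use Fa\`a di Bruno for \(e^{-\tau u}\) composed with \(u=\norm{\xi}^\gamma\), together with the homogeneity bound \(\abs{\partial^\beta\norm{\xi}^\gamma}\lesssim\norm{\xi}^{\gamma-\abs{\beta}}\). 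A product of \(m\geq1\) such factors is \(\lesssim\norm{\xi}^{m\gamma-\abs{\alpha}}\leq\norm{\xi}^{\gamma-\abs{\alpha}}\) when \(\norm{\xi}\leq1\).
\end{itemize}
This bookkeeping is routine but is the technical heart of the argument.

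\textbf{Step 3: dyadic summation.} Take a Littlewood--Paley partition \(\varphi=\sum_{j\geq0}\psi_j\) on \(\{0<\norm{\xi}\leq1\}\), with \(\supp\psi_j\subset\{\norm{\xi}\asymp2^{-j}\}\) and \(\abs{\partial^\alpha\psi_j}\lesssim2^{j\abs{\alpha}}\). Set \(g_j:=\psi_j(s-1)\). Step 2 gives \(\abs{\partial^\alpha g_j}\lesssim2^{-j\gamma}2^{j\abs{\alpha}}\) on a set of measure \(\asymp2^{-jd}\). Estimating directly, and also after integrating by parts \(N\) times, yields
\[
  \abs{\check g_j(x)}\lesssim2^{-j(d+\gamma)}\min\curlyb{1,(2^{-j}\norm{x})^{-N}} .
\]
Fix \(N>d+\gamma\) and sum over \(j\).
\begin{itemize}
  \item For \(2^{j}\leq\norm{x}\), the sum is \(\norm{x}^{-N}\sum2^{j(N-d-\gamma)}\lesssim\norm{x}^{-(d+\gamma)}\).
  \item For \(2^{j}>\norm{x}\), the sum is \(\sum2^{-j(d+\gamma)}\lesssim\norm{x}^{-(d+\gamma)}\).
\end{itemize}
Convergence of \(\sum_j g_j\) to \(g\) in \(L^1\) justifies exchanging the sum with the inverse transform. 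Combining with Step 1 gives \(\abs{k(x)}\lesssim\langle x\rangle^{-(d+\gamma)}\).

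\textbf{Step 4: hypotheses of \cref{prop:poisson-summation}.} Take \(p:=d+\gamma>d\).
\begin{itemize}
  \item \(k\) is continuous, since \(s\in L^1\), and it satisfies \(\abs{k(x)}\lesssim\langle x\rangle^{-p}\) by Step 3.
  \item \(\hat k=s\) up to reflection, and \(s=e^{-\tau\norm{\xi}^\gamma}\) decays faster than any power, so \(\abs{s(\xi)}\lesssim\langle\xi\rangle^{-p}\).
\end{itemize}
Hence both \(k\) and \(s\) satisfy the conditions of \cref{prop:poisson-summation}.

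The main obstacle is getting the sharp exponent \(d+\gamma\) rather than merely \(d+\gamma-\varepsilon\). That is why I avoid a crude single integration by parts and use the scale-adapted dyadic estimate, which relies on the uniform symbol bounds of Step 2.
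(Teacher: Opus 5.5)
Your proof is correct, but it takes a genuinely different route from the paper. The paper rescales to $\tau=1$ via $k(x)=\tau^{-d/\gamma}k_1(\tau^{-1/\gamma}x)$ and identifies $k_1$, up to normalisation, with the density of an isotropic $\gamma$-stable law. It then quotes \citet[Theorem~2.1]{blumenthal1960some} for the exact tail asymptotic $\norm{x}^{d+\gamma}k_1(x)\to C_{d,\gamma}$, and handles bounded $x$ by $\abs{k_1(x)}\leq\int s_1$. You instead prove the bound from scratch. You isolate the single non-smooth point $\xi=0$, establish the symbol estimates $\abs{\partial^\alpha(s-1)(\xi)}\lesssim\norm{\xi}^{\gamma-\abs{\alpha}}$ on the unit ball, and sum dyadic pieces, each bounded by $2^{-j(d+\gamma)}\min\{1,(2^{-j}\norm{x})^{-N}\}$. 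The Leibniz bookkeeping for $\psi_j(s-1)$, the split of the sum at $2^j\approx\norm{x}$, and the $L^1$ convergence of $\sum_j g_j$ all check out. Step~4 is also fine, with $\hat k=s$ exactly, since $s$ is even and $k,s\in L^1$. The citation buys brevity and an exact tail asymptotic rather than just an upper bound, but it relies on an external probabilistic result tied to the specific form $e^{-\tau\norm{\xi}^\gamma}$. Your argument is self-contained and yields only the upper bound. That is all the periodisation estimate $\abs{E_{ij}}\lesssim T^{-p}$ in the stretched-exponential lower bound needs. It is also more robust: it applies verbatim to any spectral density that is smooth away from the origin, has integrable derivatives of all orders there, and satisfies $\abs{\partial^\alpha(s-s(0))}\lesssim\norm{\xi}^{\gamma-\abs{\alpha}}$ near $0$. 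Nothing in it uses $\gamma\leq1$.
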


\begin{proof}
  Let \(s_1(\xi):=e^{-\norm{\xi}^{\gamma}}\), and let \(k_1\) be the kernel generated by \(s_1\). Since \(s(\xi)=s_1(\tau^{1/\gamma}\xi)\), the scaling property of the Fourier transform gives
  \[
    k(x)=\tau^{-d/\gamma}k_1(\tau^{-1/\gamma}x).
  \]
  By \citet[Theorem~2.1]{blumenthal1960some},
  \[
    \lim_{\norm{x}\to\infty}\norm{x}^{d+\gamma}k_1(x)=C_{d,\gamma}
  \]
  for a finite constant \(C_{d,\gamma}\). Hence \(\abs{k_1(x)}\lesssim\langle x\rangle^{-(d+\gamma)}\) for large \(\norm{x}\), while for bounded \(\norm{x}\),
  \[
    \abs{k_1(x)}\leq\int_{\Rd}e^{-\norm{\xi}^{\gamma}}\dif\xi<\infty.
  \]
  The scaling identity gives the claimed bound. Finally, for every \(N>0\), \(s(\xi)\lesssim_N\langle\xi\rangle^{-N}\). Taking \(N>d\) proves the final claim.
\end{proof}

We now combine this decay estimate with the lattice Fourier construction.

\ThmGenExpLB*

\begin{proof}
  Write \(a:=n/\rho\), and choose \(a_0\geq1\), depending only on \(d,\gamma,\tau,c_0\), sufficiently large for the estimates below. We first assume that \(a\geq a_0\).

  Let \(S(\xi):=e^{-\tau\norm{\xi}^{\gamma}}\), and let \(K\) be the corresponding stationary kernel. By \cref{lem:spectral-domination},
  \[
    G_k(X)\succeq c_0G_K(X)
  \]
  for every finite design \(X\).

  Let \(L:=\floor{n^{1/d}}\), \(n_0:=L^d\), and \(b:=n_0/\rho\). Then \(2^{-d}a\leq b\leq a\). Set
  \[
    p:=d+\gamma,\qquad \alpha:=\frac12\roundb[\Big]{\frac1p+\frac1d},\qquad \beta:=\frac{1-d\alpha}{2}.
  \]
  Then \(\alpha p>1\), \(d\alpha<1\), and \(\beta>0\). Let \(T_0:=\ceil{2\sqrt d}\), set \(T:=\max\curlyb{T_0,\ceil{b^\alpha}}\), and define \(K_T^\#(x):=\sum_{\ell\in\Zd}K(x+T\ell)\). By \cref{lem:stretched-exp-dual,prop:poisson-summation},
  \[
    K_T^\#(x)=\sum_{q\in\Zd}T^{-d}S(q/T)e^{2\pi i (q/T) \cdot x},
  \]
  with absolute and uniform convergence.

  Set \(E:=G_K(\Lambda_L)-G_{K_T^\#}(\Lambda_L)\). If \(x_i,x_j\in\Lambda_L\), then \(x_i-x_j\in[-1,1]^d\), and \cref{lem:stretched-exp-dual} gives
  \[
    \abs{E_{ij}}\lesssim T^{-p}\sum_{\ell\neq0}\norm{\ell}^{-p}\lesssim T^{-p}.
  \]
  Consequently,
  \[
    \rho^{-1}\norm{E}_{\op}\lesssim bT^{-p}\lesssim b^{1-\alpha p}.
  \]

  Define
  \[
    M:=\floor[\Bigg]{\roundb{\frac{\beta\log b}{\tau d^{\gamma/2}}}^{1/\gamma}},\qquad \cM:=\curlyb{0,\dots,M-1}^d.
  \]
  Increasing \(a_0\) if necessary gives \(1\leq M\leq L\). For \(m\in\cM\), \(\norm{m}\leq\sqrt d\,M\), and hence \(\tau\norm{m}^{\gamma}\leq\beta\log b\). Since \(T\lesssim b^\alpha\),
  \[
    \rho^{-1}n_0T^{-d}S(m)=bT^{-d}e^{-\tau\norm{m}^{\gamma}}\gtrsim b^{1-d\alpha-\beta}=b^\beta.
  \]
  Applying \cref{prop:lattice-fourier-directions} and then \cref{lem:spectral-domination} gives
  \[
    \rho^{-1}U_\cM^*G_k(\Lambda_L)U_\cM\succeq\roundb{cb^\beta-Cb^{1-\alpha p}}I\succeq cb^\beta I,
  \]
  where the final inequality holds after increasing \(a_0\).

  Append arbitrary points of \(\QQ\) to obtain \(X_n\in(\QQ)^n\), and extend \(U_\cM\) by zero rows to \(\widetilde U_\cM\in\bC^{n\times M^d}\). By \cref{prop:resolved-directions},
  \[
    \Effd_n^k(\QQ;\rho)\geq M^d\frac{cb^\beta}{1+cb^\beta},\qquad \Gamma_n^k(\QQ;\rho)\geq M^d\log\roundb{1+cb^\beta}.
  \]
  For \(a\geq a_0\),
  \[
    M^d\asymp\roundb{\log b}^{d/\gamma}\asymp\roundb{\log(e+a)}^{d/\gamma},\qquad \frac{cb^\beta}{1+cb^\beta}\gtrsim1,\qquad \log\roundb{1+cb^\beta}\gtrsim\log(e+a).
  \]
  This proves the two bounds when \(a\geq a_0\). Finally, \cref{rem:extend-lower-bounds} extends these bounds from \(a\in[a_0,n]\) to \(a\in[1,n]\).
\end{proof}

\subsection{Mercer directions}\label{sec:mercer-directions}

The super-exponential argument uses Mercer directions in place of lattice Fourier directions. The following proposition shows that, when the $M$-th Mercer eigenvalue is sufficiently large, one can choose a design whose Gram matrix is bounded below on an $M$-dimensional subspace.

\begin{proposition}[Mercer directions]
\label{prop:mercer-directions}
  Let \(k\) be a continuous real-valued positive semidefinite kernel on \(\QQ\), let \(\kappa_0:=\sup_{x\in\QQ}k(x,x)\), and let \(\lambda_1\geq\lambda_2\geq\cdots\geq0\) be its Mercer eigenvalues with respect to the uniform probability measure on \(\QQ\). Let \(1\leq M\leq n\), and suppose that
  \[
    n\lambda_M^2\geq64\kappa_0^2\log(2M).
  \]
  Then there exist \(X_n\in(\QQ)^n\) and \(Q\in\R^{n\times M}\) such that
  \[
    Q\tran Q=I_M\spaced{and}Q\tran G_k(X_n)Q\succeq\frac{n\lambda_M}{2}I_M.
  \]
\end{proposition}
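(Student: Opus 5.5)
We use a random design and matrix concentration, following the sketch of \cref{thm:super-exp-model-kernels-lower}. By \cref{thm:mercer}, applied with $P$ the uniform measure on $\QQ$ (which has full support), we have $k(x,y)=\sum_{j\geq1}\lambda_j\phi_j(x)\phi_j(y)$ with continuous $L^2(P)$-orthonormal $\phi_j$. We may assume $\lambda_M>0$; otherwise the hypothesis fails.

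Draw $x_1,\dots,x_n$ i.i.d.\ from $P$. Let $V\in\R^{n\times M}$ have entries $V_{ij}:=\lambda_j^{1/2}\phi_j(x_i)$, and write $v(x):=(\lambda_j^{1/2}\phi_j(x))_{j\le M}$. Then
\[
  G_k(X_n)=VV\tran+W,
\]
where $W$ is the Gram matrix of the tail kernel $\sum_{j>M}\lambda_j\phi_j\phi_j$. The tail kernel is positive semidefinite as a pointwise convergent sum of positive semidefinite rank-one kernels, so $G_k(X_n)\succeq VV\tran$.

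\textbf{Concentration.} We have $V\tran V=\sum_{i}v(x_i)v(x_i)\tran$, with mean $\E[v(x)v(x)\tran]=\Lambda:=\diag(\lambda_1,\dots,\lambda_M)$ by orthonormality. Each summand satisfies $0\preceq v(x)v(x)\tran$ and $\norm{v(x)}^2=\sum_{j\le M}\lambda_j\phi_j(x)^2\le k(x,x)\le\kappa_0$. Hence the centred summands $Y_i:=v(x_i)v(x_i)\tran-\Lambda$ satisfy $\norm{Y_i}_{\op}\le\kappa_0$, since both $v v\tran$ and $\Lambda$ lie between $0$ and $\kappa_0 I$ (note $\lambda_1\le\tr\Lambda=\E\norm{v}^2\le\kappa_0$). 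The matrix Hoeffding inequality, in Tropp's form $\Pr(\lambda_{\min}(\sum Y_i)\le -t)\le M\exp(-t^2/(8n\kappa_0^2))$, then applies with $t=n\lambda_M/2$. This gives failure probability at most
\[
  M\exp\bigl(-n\lambda_M^2/(32\kappa_0^2)\bigr)\le M(2M)^{-2}<1
\]
under the hypothesis $n\lambda_M^2\ge64\kappa_0^2\log(2M)$. So there is a realisation with
\[
  V\tran V\succeq \Lambda-\tfrac{n\lambda_M}{2}I\succeq \tfrac{n\lambda_M}{2}I .
\]
In fact one can take the deviation relative to $n\Lambda$: with $t=n\lambda_M/2$, $V\tran V\succeq n\Lambda-tI\succeq (n\lambda_M/2)I$. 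The main obstacle is matching the constant $64$ to a specific Hoeffding variant and its variance proxy. If Tropp's constant is not quite sufficient, I would use matrix Chernoff instead, which needs only $\lambda_{\max}(vv\tran)\le\kappa_0$ and $\mu_{\min}=n\lambda_M$ and gives a failure bound $M e^{-n\lambda_M/(8\kappa_0)}$. This is implied by the hypothesis, since $\lambda_M\le\kappa_0$ gives $n\lambda_M/\kappa_0\ge n\lambda_M^2/\kappa_0^2$.

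\textbf{Construction of $Q$.} On this realisation $V\tran V$ is invertible, so set $Q:=V(V\tran V)^{-1/2}$. Then $Q\tran Q=I_M$, and
\[
  Q\tran G_k(X_n)Q\succeq Q\tran VV\tran Q=(V\tran V)^{-1/2}(V\tran V)^2(V\tran V)^{-1/2}=V\tran V\succeq \tfrac{n\lambda_M}{2}I_M,
\]
which is the claim. The condition $M\le n$ is only needed so that $V$ can have full column rank, which the concentration bound already ensures.
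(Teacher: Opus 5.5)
Your proof is correct and is essentially the paper's argument: the Mercer expansion, an i.i.d.\ uniform design, and matrix Hoeffding on the centred rank-one summands $v(x_i)v(x_i)\tran-\Lambda$ (bounded by $\kappa_0$) at deviation $n\lambda_M/2$, giving the same exponent $n\lambda_M^2/(32\kappa_0^2)$. You then take $Q=V(V\tran V)^{-1/2}$ and use positivity of the Mercer tail, as the paper does; the paper applies the two-sided operator-norm version with prefactor $2M$ where you use the one-sided bound, and your intermediate display $V\tran V\succeq\Lambda-\tfrac{n\lambda_M}{2}I$ should read $n\Lambda-\tfrac{n\lambda_M}{2}I$, as you yourself correct immediately afterwards.
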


\begin{proof}
  By Mercer's theorem \citep[Theorem~4.49]{steinwart2008support}, there are continuous functions \((\phi_j)_{j\geq1}\), orthonormal in \(L^2(\QQ)\), such that
  \[
    k(x,y)=\sum_{j\geq1}\lambda_j\phi_j(x)\phi_j(y),
  \]
  with absolute and uniform convergence. Define
  \[
    u(x):=\roundb{\lambda_1^{1/2}\phi_1(x),\dots,\lambda_M^{1/2}\phi_M(x)}\tran,\qquad \Lambda:=\diag\roundb{\lambda_1,\dots,\lambda_M}.
  \]
  If \(X\) is uniform on \(\QQ\), then \(\E[u(X)u(X)\tran]=\Lambda\). Moreover,
  \[
    \norm{u(x)}^2=\sum_{j=1}^M\lambda_j\phi_j(x)^2\leq k(x,x)\leq\kappa_0, \spaced{and}
    \lambda_1\leq\sum_{j\geq1}\lambda_j=\int_\QQ k(x,x)\dif x\leq\kappa_0.
  \]

  Let \(X_1,\dots,X_n\) be independent uniform points in \(\QQ\), and let \(V\in\R^{n\times M}\) have rows \(u(X_1)\tran,\dots,u(X_n)\tran\). Set \(\widehat\Sigma:=n^{-1}V\tran V\). The matrices \(A_i:=u(X_i)u(X_i)\tran-\Lambda\) are independent, mean zero, and satisfy \(-\kappa_0I_M\preceq A_i\preceq\kappa_0I_M\). The matrix Hoeffding inequality \citep[Theorem~1.3]{tropp2012user} gives
  \[
    \P{\norm{\widehat\Sigma-\Lambda}_{\op}\geq\lambda_M/2}\leq2M\exp\curlyb[\Big]{-\frac{n\lambda_M^2}{32\kappa_0^2}}<1.
  \]
  Hence there is a deterministic realization \(X_n=(x_1,\dots,x_n)\in(\QQ)^n\) for which
  \[
    \widehat\Sigma\succeq\Lambda-\frac{\lambda_M}{2}I_M\succeq\frac{\lambda_M}{2}I_M.
  \]
  In particular, \(V\tran V\) is invertible. Define \(Q:=V(V\tran V)^{-1/2}\). Then \(Q\tran Q=I_M\). The truncated Mercer kernel \(k_M(x,y):=\sum_{j=1}^M\lambda_j\phi_j(x)\phi_j(y)\) satisfies \(k-k_M\succeq0\), and \(G_{k_M}(X_n)=VV\tran\). Therefore
  \[
    Q\tran G_k(X_n)Q\succeq Q\tran VV\tran Q=V\tran V=n\widehat\Sigma\succeq\frac{n\lambda_M}{2}I_M. \qedhere
  \]
\end{proof}

\subsection{Super-exponential spectral decay}\label{app:sup-exp-lb}

The super-exponential proof requires the Mercer eigenvalue decay of a product witness kernel. We first relate the product spectrum to the one-dimensional eigenvalues.

\begin{lemma}
\label{lem:eigenvalue-product-count}
  Let $(\lambda_m)_{m\in\Np}$ be a nonincreasing sequence of positive numbers such that
  \[
    -\log\lambda_m\asymp m\log m
  \]
  for all sufficiently large $m$. Let $(\Lambda_m)_{m\in\Np}$ be the nonincreasing rearrangement, counting multiplicity, of the products $\prod_{j=1}^d\lambda_{z_j}$, $z \in \Np^d$. Then
  \[
    -\log\Lambda_m\asymp m^{1/d}\log m
  \]
  for all sufficiently large $m$.
\end{lemma}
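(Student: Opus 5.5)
We plan to reduce the statement to a counting problem. Set \(\psi(m):=-\log\lambda_m\), so that \(c_1m\log m\leq\psi(m)\leq C_1m\log m\) for \(m\geq m_0\). Adjusting constants (using positivity and monotonicity of \((\lambda_m)\)), we extend these bounds to all \(m\geq1\) in the form
\[
c_1 m\log(e+m)-C_2\leq\psi(m)\leq C_1 m\log(e+m)+C_2 .
\]
For \(t>0\) define the counting function \(N(t):=\card\{z\in\Np^d:\sum_{j}\psi(z_j)\leq t\}\). Since \((\Lambda_m)\) is the nonincreasing rearrangement of \(\exp\{-\sum_j\psi(z_j)\}\), we have \(-\log\Lambda_m\leq t\) if and only if \(N(t)\geq m\). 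It therefore suffices to show that \(N(t)\) is two-sided comparable to \(\roundb{t/\log t}^d\) for large \(t\), in the sense that \(\roundb{c t/\log t}^d\leq N(t)\leq \roundb{Ct/\log t}^d\). Inverting this, if \(m\asymp(t/\log t)^d\), then \(t\asymp m^{1/d}\log m\). Here we use that \(\log t\asymp\log m\) in this range, and that \(g(t)=t/\log t\) is increasing with \(g^{-1}(u)\asymp u\log u\).

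For the lower bound on \(N(t)\), we take all \(z\) with every \(z_j\leq R\). Each such \(z\) satisfies \(\sum_j\psi(z_j)\leq d(C_1R\log(e+R)+C_2)\). Choosing \(R\asymp t/\log t\) with a small constant makes this at most \(t\), which gives \(N(t)\geq R^d\gtrsim (t/\log t)^d\).

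For the upper bound, suppose \(\sum_j\psi(z_j)\leq t\). Then each coordinate satisfies \(\psi(z_j)\leq t+(d-1)C_2\), because every \(\psi\geq -C_2\) after the adjustment above (in fact \(\psi\geq-\log\lambda_1\)). Hence \(c_1z_j\log(e+z_j)\lesssim t\), which forces \(z_j\leq R'\asymp t/\log t\). So \(N(t)\leq (R')^d\). This crude cube bound suffices, since only constants are at stake.

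The main obstacle is the inversion step, where we must convert \(N(t)\asymp(t/\log t)^d\) into the bound on \(\Lambda_m\) with the correct logarithm. Given \(m\) large, we set \(t_\pm:=C_\pm m^{1/d}\log m\). The lower bound on \(N\) gives \(N(t_+)\geq m\) once \(C_+\) is large, because \(t_+/\log t_+\gtrsim C_+m^{1/d}\); hence \(-\log\Lambda_m\leq t_+\). Similarly, the upper bound on \(N\) gives \(N(t_-)<m\) for small \(C_-\), so \(-\log\Lambda_m>t_-\). We also need \(\log t_\pm\asymp\log m\), which is elementary. Finally, we handle small \(m\) by restricting to sufficiently large \(m\), as the statement allows.
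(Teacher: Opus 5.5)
Your proposal is correct and follows essentially the same route as the paper: both reduce to the counting function $N_d(t)$, sandwich it between cubes of side $\asymp t/\log t$ (the paper writes this as $N_1(t/d)^d\leq N_d(t)\leq N_1(t)^d$ with $N_1(t)\asymp t/\log t$), and invert using $-\log\Lambda_m\leq t \iff N_d(t)\geq m$ with constants $A_\pm$. The only cosmetic difference is that the paper first rescales the sequence so that $-\log\lambda_m\geq 0$, whereas you carry the additive constant $C_2$ through the coordinatewise bound.
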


\begin{proof}
  Multiplying every $\lambda_m$ by a fixed positive constant changes $-\log\Lambda_m$ by an additive constant, so we may assume that $\lambda_m\leq e^{-1}$ for every $m$. Set $a_m:=-\log\lambda_m$ and $N_1(t):=\card\curlyb{m\in\Np:a_m\leq t}$. Since $a_m\asymp m\log m$ for all sufficiently large $m$, we have $N_1(t)\asymp t/\log t$ for all sufficiently large $t$. Indeed, this follows from monotonicity of $(a_m)$ and the relation $(t/\log t)\log(t/\log t)\asymp t$.

  For $z\in\Np^d$, set $b(z):=\sum_{j=1}^d a_{z_j}$ and $N_d(t):=\card\curlyb{z\in\Np^d:b(z)\leq t}$. Since $a_m\geq0$,
  \[
    \roundb{N_1(t/d)}^d\leq N_d(t)\leq\roundb{N_1(t)}^d,
  \]
  and hence $N_d(t)\asymp t^d(\log t)^{-d}$ for all sufficiently large $t$.

  Set $c_m:=-\log\Lambda_m$ and $r_m:=m^{1/d}\log m$. Then $(c_m)_{m\in\Np}$ is the nondecreasing rearrangement of $(b(z))_{z\in\Np^d}$, counting multiplicity, and
  \[
    c_m=\inf\curlyb{t>0:N_d(t)\geq m}.
  \]
  For every fixed $A>0$, $N_d(Ar_m)\asymp A^dm$ for all sufficiently large $m$. We may therefore choose $A_->0$ sufficiently small and $A_+>0$ sufficiently large that
  \[
    N_d(A_-r_m)<m\leq N_d(A_+r_m)
  \]
  for all sufficiently large $m$. It follows that $A_-r_m<c_m\leq A_+r_m$, proving the result.
\end{proof}
We next obtain the required one-dimensional eigenvalue decay from the asymptotics of \citet{widom1964asymptotic}.

\begin{lemma}
\label{lem:analytic-eigenvalues}
  Let $\gamma>1$ and $\tau>0$, and let $k_\gamma$ be the restriction to $\QQ\times\QQ$ of the stationary kernel on $\Rd$ with spectral density
  \[
    s_\gamma(\xi):=\exp\curlyb{-\tau\sum_{j=1}^d\abs{\xi_j}^\gamma}=\prod_{j=1}^de^{-\tau\abs{\xi_j}^\gamma}.
  \]
  Then its Mercer eigenvalues $(\lambda_m)_{m\in\Np}$ with respect to the uniform probability measure on $\QQ$ satisfy
  \[
    -\log\lambda_m\asymp m^{1/d}\log m
  \]
  for all sufficiently large $m$.
\end{lemma}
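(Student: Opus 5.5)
The plan is to reduce to one dimension using the product structure, and then invoke \cref{lem:eigenvalue-product-count}.

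The spectral density factorises as $s_\gamma(\xi)=\prod_{j=1}^d s^{(1)}(\xi_j)$ with $s^{(1)}(t)=e^{-\tau\abs t^\gamma}$. Consequently the kernel factorises as
\[
  k_\gamma(x,y)=\prod_{j=1}^d\kappa(x_j-y_j),
\]
where $\kappa$ is the one-dimensional stationary kernel with spectral density $s^{(1)}$. The uniform probability measure on $\QQ$ is the product of uniform measures on $[0,1]$, so the integral operator $T_{k_\gamma}$ on $L^2(\QQ)$ is the tensor product of $d$ copies of the one-dimensional operator $T_\kappa$ on $L^2([0,1])$. If $(\mu_m,\psi_m)$ are the Mercer eigenpairs of $T_\kappa$, then the products $\prod_j\psi_{z_j}(x_j)$ form an orthonormal basis of eigenfunctions. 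Their eigenvalues are $\prod_j\mu_{z_j}$, for $z\in\Np^d$. Completeness of tensor products of orthonormal bases shows there are no other nonzero eigenvalues, so $(\lambda_m)$ is exactly the nonincreasing rearrangement of these products. It therefore suffices to prove the one-dimensional statement $-\log\mu_m\asymp m\log m$ for large $m$, together with positivity of every $\mu_m$. Given these, \cref{lem:eigenvalue-product-count} yields the claim.

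Positivity follows because $s^{(1)}>0$ everywhere. For any nonzero $f\in L^2([0,1])$,
\[
  \langle T_\kappa f,f\rangle=\int s^{(1)}\abs{\hat f}^2>0,
\]
since $\hat f$ (with $f$ extended by zero) is entire and not identically zero. Hence $T_\kappa$ is strictly positive and has infinitely many positive eigenvalues.

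The main step is the one-dimensional asymptotic. Here I would invoke \citet{widom1964asymptotic}, which gives asymptotics for the eigenvalues of convolution operators on a finite interval whose symbol decays faster than any exponential. In the form relevant here, with the symbol $e^{-\tau\abs t^\gamma}$ and $\gamma>1$, it gives $\log\mu_m\sim -c\,m\log m$ for an explicit constant $c$ depending on $\gamma$. Some bookkeeping is needed to match the setting:
\begin{itemize}
  \item translate $[0,1]$ to a symmetric interval;
  \item rescale the frequency variable to match Widom's normalisation;
  \item account for the probability normalisation, which changes eigenvalues only by a constant factor.
\end{itemize}
Each of these alters $-\log\mu_m$ by at most $O(1)$ or $O(m)$, which is negligible relative to $m\log m$.

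I expect the main obstacle to be confirming that Widom's hypotheses hold for $s^{(1)}$, namely evenness, monotonicity on $[0,\infty)$, and the required regularity of $\log s^{(1)}$, and extracting the two-sided $\asymp$ bound from his asymptotic.

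If that citation is awkward, the fallback is to prove the two sides directly. For the upper bound on $\mu_m$, use the Chebyshev majorant of \cref{prop:entire-finite-rank-majorants} together with the finite-order estimate of \cref{prop:stationary-entire-finite-order}. Applying the min-max principle to the kernel $H_\varrho$, which dominates $\kappa$ (mapped to $[-1,1]$), gives $\mu_{m+1}\lesssim B(\varrho)\varrho^{-m}$. Optimising with $\varrho=m^{1/p}$ then yields $\mu_m\le e^{-cm\log m}$. The lower bound is harder. One route is to compare $\kappa$ from below, via \cref{lem:spectral-domination}, with a Gaussian kernel, whose eigenvalues on an interval are known to satisfy $-\log\mu_m\lesssim m\log m$. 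Such a comparison is legitimate only if $e^{-\tau\abs t^\gamma}\gtrsim e^{-\tau'\abs t^2}$, which holds when $\gamma\le2$. For $\gamma>2$ this argument fails, and the Widom citation is required.
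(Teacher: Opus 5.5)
Your proposal is correct and follows the paper's proof. Both factorise the kernel and the uniform measure so that the eigenvalues are products of one-dimensional eigenvalues, obtain $-\log\mu_m\asymp m\log m$ from Widom's Theorem~III, and then apply the product-counting lemma. For the step you flagged, the paper checks that $\tau|\sigma|^\gamma$ is even, eventually convex and superlinear, and shows that the root of $\tau\sigma^\gamma=2m\log(m/\sigma)$ satisfies $\sigma_m\asymp(m\log m)^{1/\gamma}$. Your Paley--Wiener positivity argument supplies a hypothesis of the counting lemma that the paper leaves implicit, and your fallback is not needed.
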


\begin{proof}
  Let $(\mu_m)_{m\in\Np}$ denote the Mercer eigenvalues, with respect to the uniform probability measure on $[0,1]$, of the one-dimensional kernel generated by $\xi\mapsto e^{-\tau\abs{\xi}^\gamma}$. Set \(\Gamma(\sigma):=\tau\abs{\sigma}^\gamma\). Since \(\gamma>1\), \(\Gamma\) is even, convex for large \(\sigma\), and satisfies \(\Gamma(\sigma)/\sigma\to\infty\). Hence \citet[Theorem~III]{widom1964asymptotic} gives
  \[
    \log\mu_m\sim-\Gamma(\sigma_m),
  \]
  where \(\sigma_m\) is the unique solution of
  \[
    \tau\sigma^\gamma=2m\log\roundb{m/\sigma},\qquad 0<\sigma<m.
  \]
  We claim that \(\sigma_m\asymp(m\log m)^{1/\gamma}\). Indeed, for \(\sigma=A(m\log m)^{1/\gamma}\),
  \[
    \tau\sigma^\gamma=\tau A^\gamma m\log m,
  \]
  while
  \[
    2m\log\roundb{m/\sigma}=2m\roundb{\roundb{1-\frac1\gamma}\log m-\frac1\gamma\log\log m-\log A}.
  \]
  For \(A>0\) sufficiently small, the left-hand side is smaller than the right-hand side for all sufficiently large \(m\), while for \(A>0\) sufficiently large the reverse inequality holds. Since the two sides are respectively increasing and decreasing in \(\sigma\), the claim follows. Consequently, \(\Gamma(\sigma_m)\asymp m\log m\), and therefore \(-\log\mu_m\asymp m\log m\).

  Since $s_\gamma$ factorises coordinatewise, $k_\gamma$ is a product of one-dimensional kernels, and its Mercer eigenvalues are the products $\prod_{j=1}^d\mu_{z_j}$, $z\in\Np^d$, counting multiplicity. Applying \cref{lem:eigenvalue-product-count} gives the result.
\end{proof}

We now combine the Mercer eigenvalue estimate with \cref{prop:mercer-directions} to construct a design whose Gram matrix is bounded below on a subspace of the required dimension.

\ThmSupExpLB*

\begin{proof}
  Write \(a:=n/\rho\), \(L:=L(a)\), and \(\ell:=\log L\). Choose \(a_0\geq1\), depending only on \(d,\gamma,\tau,c_0\), sufficiently large for the estimates below. We first assume that \(a\geq a_0\).

  By equivalence of norms on \(\Rd\), there exists \(\tau_0>0\) such that
  \[
    e^{-\tau\norm{\xi}^{\gamma}}\geq e^{-\tau_0\norm{\xi}_\gamma^\gamma},\qquad \xi\in\Rd.
  \]
  Let \(K_\gamma\) be the stationary kernel with spectral density \(S_\gamma(\xi):=e^{-\tau_0\norm{\xi}_\gamma^\gamma}\). By \cref{lem:spectral-domination},
  \[
    G_k(X)\succeq c_0G_{K_\gamma}(X)
  \]
  for every finite design \(X\).

  Let \((\lambda_m)_{m\in\Np}\) be the Mercer eigenvalues of \(K_\gamma\) with respect to the uniform probability measure on \(\QQ\). By \cref{lem:analytic-eigenvalues},
  \[
    \lambda_m\geq\exp\curlyb{-Cm^{1/d}\log(e+m)}
  \]
  for all sufficiently large \(m\). Fix \(A>0\), to be chosen below, and set
  \[
    M:=\floor[\big]{A\roundb{L/\ell}^d}.
  \]
  Increasing \(a_0\) if necessary gives \(M\leq n\) and places \(M\) in the range of the preceding eigenvalue bound. Moreover,
  \[
    M^{1/d}\log(e+M)\lesssim A^{1/d}L.
  \]
  Choose \(A>0\) sufficiently small that \(\lambda_M\geq e^{-L/8}\). Increasing \(a_0\) once more gives \(\lambda_M\geq a^{-1/4}\).

  Let \(\kappa_0:=\sup_{x\in\QQ}K_\gamma(x,x)\). Since \(\rho\geq1\), $n\lambda_M^2\geq na^{-1/2}=\rho a^{1/2}\geq a^{1/2}$, while \(\log(2M)\lesssim\log L\). Increasing \(a_0\) once more gives
  \[
    n\lambda_M^2\geq64\kappa_0^2\log(2M).
  \]
  By \cref{prop:mercer-directions}, there exist \(X_n\in(\QQ)^n\) and \(Q\in\R^{n\times M}\) such that \(Q\tran Q=I_M\) and
  \[
    Q\tran G_{K_\gamma}(X_n)Q\succeq\frac{n\lambda_M}{2}I_M.
  \]
  Spectral domination gives
  \[
    Q\tran G_k(X_n)Q\succeq\frac{c_0n\lambda_M}{2}I_M=\rho\theta I_M,\qquad \theta:=\frac{c_0a\lambda_M}{2}\gtrsim a^{3/4}.
  \]
  Applying \cref{prop:resolved-directions},
  \[
    \Effd_n^k(\QQ;\rho)\geq M\frac{\theta}{1+\theta}\spaced{and}\Gamma_n^k(\QQ;\rho)\geq M\log(1+\theta).
  \]
  Since \(M\asymp L^d\ell^{-d}\), \(\theta/(1+\theta)\gtrsim1\), and \(\log(1+\theta)\gtrsim L\), this gives
  \[
    \Effd_n^k(\QQ;\rho)\gtrsim L(a)^d\roundb{\log L(a)}^{-d}\spaced{and}\Gamma_n^k(\QQ;\rho)\gtrsim L(a)^{d+1}\roundb{\log L(a)}^{-d}
  \]
  when \(a\geq a_0\). Finally, \cref{rem:extend-lower-bounds} extends these bounds from \(a\in[a_0,n]\) to \(a\in[1,n]\).
\end{proof}
\end{document}